\documentclass[a4paper, 10pt]{amsart}

\textwidth16.6cm \textheight21cm \oddsidemargin-0.1cm
\evensidemargin-0.1cm

\usepackage[mathscr]{eucal}
\usepackage{amsmath,amssymb,amsfonts,amsthm, color, eufrak}
\usepackage{enumitem}
\usepackage{bm}
\usepackage{hyperref}
\usepackage[capitalize          
           ]{cleveref}


\newcommand{\Subref}[1]{\dosubref\Cref#1\relax}
\def\dosubref#1#2:#3\relax{#1{#2}.\ref{#2:#3}}

\newenvironment{remenumerate}[1][]
{\enumerate[leftmargin=0em,itemindent=2em,itemsep=0.2em] }
{\endenumerate}

\newtheorem{theorem}{Theorem}[section]
\newtheorem{lemma}[theorem]{Lemma}
\newtheorem{corollary}[theorem]{Corollary}
\newtheorem{proposition}[theorem]{Proposition}
\newtheorem{conjecture}[theorem]{Conjecture}

\theoremstyle{definition}

\newtheorem{definition}[theorem]{Definition}
\newtheorem{remark}[theorem]{Remark}
\newtheorem{remarks}[theorem]{\bf Remark}

 \DeclareMathOperator{\ord}{ord}
\DeclareMathOperator{\lcm}{lcm} 
 \DeclareMathOperator{\rad}{rad}
 \DeclareMathOperator{\supp}{supp}
\DeclareMathOperator{\Pic}{Pic} 
\DeclareMathOperator{\Ker}{Ker} 
\DeclareMathOperator{\End}{End} \DeclareMathOperator{\add}{add}
\DeclareMathOperator{\udim}{udim}
\DeclareMathOperator{\Ext}{Ext}
\DeclareMathOperator{\ann}{ann}
\DeclareMathOperator{\modspec}{modspec}

\providecommand{\monext}{\propto}
\providecommand{\card}[1]{\lvert#1\rvert}
\providecommand{\length}[1]{\lvert#1\rvert}

\newcommand{\N}{\mathbb N}
\newcommand{\Z}{\mathbb Z}
\newcommand{\R}{\mathbb R}
\newcommand{\Q}{\mathbb Q}

\newcommand{\tensor}{\otimes}
\newcommand{\FF}{\text{\rm FF}}

\newcommand{\tor}{\text{\rm tor}}

\newcommand{\proj}{\text{\rm proj}}
\newcommand{\red}{\text{\rm red}}
\newcommand{\rat}{\text{\rm rat}}

\newcommand{\DP}{\negthinspace : \negthinspace}

\newcommand{\la}{\langle}
\newcommand{\ra}{\rangle}
\newcommand{\be}{\begin{equation}}
\newcommand{\ee}{\end{equation}}
\newcommand{\und}{\;\mbox{ and }\;}
\newcommand{\nn}{\nonumber}
\newcommand{\ber}{\begin{eqnarray}}
\newcommand{\eer}{\end{eqnarray}}
\newcommand{\Sum}[2]{\underset{#1}{\overset{#2}{\sum}}}
\newcommand{\Summ}[1]{\underset{#1}{\sum}}

\newcommand{\Fc}{\mathcal F}
\newcommand{\vp}{\mathsf v}
\newcommand{\bdot}{{\cdot}}

\newcommand{\wtilde}{\widetilde}

\newcommand{\Ff}{\mathcal F_{\rat}}
\newcommand{\rk}{\mathsf{r}}

\newcommand{\quo}{ \mathsf{q} }
\newcommand{\cA}{\mathcal A}
\newcommand{\cB}{\mathcal B}
\newcommand{\cC}{\mathcal C}
\newcommand{\cT}{\mathcal T}
\newcommand{\cV}{\mathcal V}
\newcommand{\cW}{\mathcal W}
\newcommand{\sL}{\mathsf L}
\newcommand{\sZ}{\mathsf Z}
\newcommand{\sd}{\mathsf d}
\newcommand{\st}{\mathsf t}
\newcommand{\enumequiv}{%
  \renewcommand{\theenumi}{(\alph{enumi})}%
  \renewcommand{\labelenumi}{\theenumi}%
}

\renewcommand{\sc}{\mathsf c} 
\renewcommand{\vec}[1]{\mathbf{#1}}
\renewcommand{\t}{\, | \,}
\renewcommand{\labelenumi}{\arabic{enumi}.}

\begin{document}

\thanks{The first author was a Fulbright-NAWI Graz Visiting Professor in the Natural Sciences and supported by the Austrian-American Education Commission. The second and  fourth authors were supported by the Austrian Science Fund FWF  Projects P26036-N26 and W1230.}

\author{N.R. Baeth and A. Geroldinger and D.J. Grynkiewicz and D. Smertnig}

\address{Department of Mathematics and Computer Science \\ University of Central Missouri \\ Warrensburg, MO 64093, USA; \normalfont{ baeth@ucmo.edu}}

\address{Institut f\"ur Mathematik und Wissenschaftliches Rechnen \\
Karl-Franzens-Universit\"at Graz \\
Heinrichstra\ss e 36\\
8010 Graz, Austria; \normalfont{alfred.geroldinger@uni-graz.at,  daniel.smertnig@uni-graz.at} }

\address{Department of Mathematical Sciences \\ University of Memphis \\ Memphis, TN 38152, USA; \normalfont{diambri@hotmail.com}}

\subjclass[2010]{11B30, 11P70, 13F05, 16D70, 16E60, 16P40,   20M13}

\keywords{Krull monoids,  zero-sum sequences,  direct-sum decompositions, indecomposable modules, Pr\"ufer rings, hereditary Noetherian prime rings}

\begin{abstract}
Let $R$ be a ring and let $\mathcal C$ be a small class of right $R$-modules which is closed under finite direct sums, direct summands, and isomorphisms. Let $\mathcal V (\mathcal C)$ denote a set of representatives of isomorphism classes in $\mathcal C$ and, for any module $M$ in $\mathcal C$, let $[M]$ denote the unique element in $\mathcal V (\mathcal C)$ isomorphic to $M$. Then $\mathcal V (\mathcal C)$ is a reduced commutative semigroup with operation defined by $[M] + [N] = [M \oplus N]$, and this semigroup carries all information about direct-sum decompositions of modules in $\mathcal C$. This semigroup-theoretical point of view has been prevalent in the theory of direct-sum decompositions since it was  shown that if $\End_R (M)$ is semilocal for all $M\in \mathcal C$, then $\mathcal V (\mathcal C)$ is a Krull monoid. Suppose that the monoid $\mathcal V (\mathcal C)$ is Krull with a finitely generated  class group (for example, when $\mathcal C$ is the class of finitely generated torsion-free modules and $R$ is a one-dimensional reduced Noetherian local ring). In this case we study the arithmetic of $\mathcal V (\mathcal C)$ using new methods from zero-sum theory. Furthermore, based on module-theoretic work of Lam, Levy, Robson, and others we study the algebraic and arithmetic structure of the monoid $\mathcal V (\mathcal C)$ for certain classes of modules over Pr\"ufer rings and hereditary Noetherian prime rings.
\end{abstract}

\title[Direct-sum Decompositions and associated combinatorial problems]{A semigroup-theoretical view of direct-sum decompositions \\ and associated combinatorial problems}

\maketitle

\bigskip
\section{Introduction} \label{1}
\bigskip

The overarching goal of this manuscript is to study direct-sum decompositions of modules into indecomposable modules. Let $R$ be a ring and let $M$ be a right $R$-module. If $M$ is Noetherian or Artinian, then a well-known and simple argument shows that $M$ is a finite direct sum of indecomposable right $R$-modules. If, for example, $M$ is either both Noetherian and Artinian or if $R$ is a principal ideal domain and $M$ is finitely generated, then such a direct-sum decomposition is unique; that is, the Krull-Remak-Schmidt-Azumaya property (KRSA, for short) holds. For a simple example of non-unique direct-sum decomposition, consider a commutative domain $R$ with distinct non-principal maximal ideals $\mathfrak m_1$ and $\mathfrak m_2$. Then the epimorphism $\mathfrak m_1 \oplus \mathfrak m_2 \to R$, defined by $(m_1, m_2) \mapsto m_1+m_2$, gives rise to a split short exact sequence $0 \to \mathfrak m_1 \cap \mathfrak m_2 \to \mathfrak m_1 \oplus \mathfrak m_2 \to R \to 0$. Hence $\mathfrak m_1 \oplus \mathfrak m_2 \cong R \oplus (\mathfrak m_1 \cap \mathfrak m_2)$. Since $\mathfrak m_1$ and $\mathfrak m_2$ are not principal and since $R$ is a domain, each $\mathfrak m_i$ is indecomposable as an $R$-module and is not  isomorphic to $R$. Since the pioneering work of Krull, Remak, Schmidt, and Azumaya, direct-sum decompositions have been a classic topic in module theory. We refer the reader to \cite{Fa03a} for an overview of the celebrated Krull-Remak-Schmidt-Azumaya Theorem and related topics in direct-sum theory.

The work of  Facchini, Herbera, and Wiegand \cite{Fa-He00, MR1778501,Fa-Wi04} introduced a new semigroup-theoretical approach to the study of direct-sum decompositions of modules when KRSA fails to hold. Let $\mathcal C$ be a small class of right $R$-modules which is closed under finite direct sums, direct summands, and isomorphisms. Let $\mathcal V (\mathcal C)$ denote a set of representatives of isomorphism classes in $\mathcal C$ and, for any module $M$ in $\mathcal C$, let $[M]$ denote the unique element in $\mathcal V (\mathcal C)$ isomorphic to $M$. Then $\mathcal V (\mathcal C)$ is a reduced commutative semigroup with operation defined by $[M] + [N] = [M \oplus N]$, and this semigroup carries all information about direct-sum decompositions of modules in $\mathcal C$. In particular, $[M]$ is an irreducible element of the semigroup $\mathcal V (\mathcal C)$ if and only if $M$ is an indecomposable module, and direct-sum decompositions of modules in $\mathcal C$ are unique (equivalently, KRSA holds) if and only if $\mathcal V ( \mathcal C)$ is a free abelian monoid. Suppose $\mathcal C$ is a class of $R$-modules as just defined and that KRSA fails. Then arithmetical questions of the following type naturally arise.

\medskip

\begin{itemize}
\item [{\bf Q1:}] If $M_1 \oplus M_2 = N_1 \oplus \cdots \oplus N_l$, where $M_1, M_2, N_1, \ldots, N_l$ are indecomposable modules, does there exist an upper bound for $l$ depending only on $\mathcal C$?
\item [{\bf Q2:}] Suppose an indecomposable module $M$ is isomorphic to a direct summand of $N_1 \oplus \cdots \oplus N_l$ for indecomposable modules $N_1, \ldots , N_l$. Is there an upper bound (depending only on $\mathcal C$) for the number $|I|$ such that $M$ is already isomorphic to a direct summand of $\oplus_{i \in I} N_i$?
\end{itemize}

\medskip

We propose the following overall strategy to tackle these and other arithmetical questions regarding non-unique direct-sum decompositions of modules.

\medskip

\begin{itemize}
\item[{\bf A.}] Use module-theoretic results in order to describe the algebraic structure of the semigroup $\mathcal V (\mathcal C)$.

\item[{\bf B.}] Use factorization theory to study the arithmetic structure of the semigroup $\mathcal V (\mathcal C)$.
\end{itemize}

\medskip

This strategy is relatively new, but has been used in several recent papers for certain classes of modules (see, for example \cite{Ba-Ge14b}). In the present paper we pursue this strategy for three classes of finitely generated modules: torsion-free modules over one-dimensional reduced commutative Noetherian local rings (Section \ref{5}), modules over Pr\"ufer rings (Section \ref{6}), and right-modules over hereditary Noetherian prime rings (Section \ref{7}).

\medskip

First, suppose that $\mathcal C$ is a class of $R$-modules such that the endomorphism ring $\End_R (M)$ is semilocal for each $R$-module $M$. In this case, Facchini proved \cite[Theorem 3.4]{Fa02} that  $\mathcal V (\mathcal C)$ is a reduced Krull monoid.  Earlier results in this direction can be found in \cite{Fa-He00, MR1778501, Wi01}). A reduced Krull monoid $H$ is uniquely determined by its characteristic $(G, (m_g)_{g \in G})$ where $G$ is the class group of $H$ and, for each $g \in G$, $m_g$ is the cardinality of the set of prime divisors lying in the class $g \in G$ (see Section \ref{2}). Many arithmetical problems depend only on the set $G_p = \{ g \in G : m_g > 0 \}$ of classes containing prime divisors and, for simplicity, we restrict our discussion to this case. Therefore, in order to determine the structure of $\mathcal V (\mathcal C)$, it is required to determine the characteristic of $\mathcal V(\mathcal C)$, or at least the tuple $(G, G_P)$. In general, this is an herculean task. Indeed, even for specific classes $\mathcal C$ of modules where it is known that $\End_R(M)$ is semilocal for each $M$ in $\mathcal C$, except for in very special situations, we have limited information about $(G, G_P)$. For an overview what is known for certain classes of finitely generated modules over certain one- and two-dimensional Noetherian local rings, see \cite{Ba-Ge14b}. Nevertheless, suppose we are in a situation where we are able to determine the tuple $(G, G_P)$. Then, by a well-known transfer homomorphism (see Proposition \ref{3.1}), arithmetical problems in $\mathcal V (\mathcal C)$ can be studied in the (combinatorial) Krull monoid $\mathcal B (G_P)$, the monoid of zero-sum sequences over $G_P$. Therefore, in this setting, the study of uniqueness and non-uniqueness of direct-sum decompositions can be reduced to zero-sum theory, a flourishing subfield of combinatorial and additive number theory. Except for occasional work (see \cite{An-Ch-Sm94c} for early contributions), the focus of (arithmetical) zero-sum theory has been restricted to the case where $G$ is a finite abelian group, whereas class groups $G$ stemming from monoids of modules $\mathcal V (\mathcal C)$ are often infinite. In Section \ref{4} we study zero-sum theory over finitely generated free abelian groups using new methods from matroid theory (goal {\bf B}). The focus will be on the study of the Davenport constant which can often be used to provide bounds on important factorization-theoretic invariants. Specifically, upper and lower bounds on the Davenport constant are given in Theorem \ref{4.2}. In Section \ref{5} these results will be applied to Krull monoids. In particular, module-theoretic results will be used in order to describe $\mathcal V(\mathcal C)$ (goal {\bf A}) in such a way that we can apply the results from Section \ref{4} (see Corollaries \ref{5.5} and \ref{5.6}).

\medskip

Apart from cases where $\mathcal V(\mathcal C)$ is Krull and some trivial cases (say, where KRSA holds whence $\mathcal V(\mathcal C)$ is free abelian), the structure of $\mathcal V (\mathcal C)$ has not been studied from a semigroup-theoretical point of view. Our goal in the present paper is to take this approach for certain classes of finitely generated modules. The most simple case is the classical Theorem of Steinitz which determines the structure of direct-sum decompositions of finitely generated modules over Dedekind domains. This result has found generalizations into many directions. We consider two generalizations in the setting of certain classes of modules over Pr\"ufer rings and over hereditary Noetherian prime rings.

In Section \ref{6} we study the class of finitely generated projective modules over a class of Pr\"ufer rings. Based on work of Feng and Lam (\cite{Fe-La02a}) we show that $\mathcal V (\mathcal C_{\proj})$ is a finitely primary monoid (and goal {\bf A} is achieved). Since the arithmetic of finitely primary monoids has been well-studied, there are well-known answers to questions about the arithmetic of $\mathcal V(\mathcal C)$ (goal {\bf B}). These results are summarized in Theorems \ref{6.1} and \ref{6.3}.

In Section \ref{7} we study the class of finitely generated projective modules over hereditary Noetherian prime rings (which generalize non-commutative Dedekind prime rings). Deep module theoretic work by Levy and Robson (\cite{Le-Ro11a}) allows an algebraic characterization of the associated monoids of stable isomorphism classes of modules (goal {\bf A}). We first introduce monoids of this type in an abstract setting and study their arithmetic (Propositions \ref{monext} and \ref{acm-factorization}). We then apply these results to monoids of modules (goal {\bf B}) over hereditary Noetherian prime rings in Theorem \ref{hnp-proj}.

In both the setting of Sections \ref{6} and of Section \ref{7}, the respective monoids of modules are seen to be half-factorial (all direct-sum decompositions of a given module have the same length). However, these modules behave very differently with respect to finer arithmetical invariants including the $\omega$-invariants and the tame degrees (see Theorems \ref{6.1}, \ref{6.3}, and \ref{hnp-proj}).

\medskip

Section \ref{2} is preparatory in nature. There we gather together arithmetical concepts from factorization theory as well as required material on (generalized) Krull monoids and monoids of modules. An even more detailed description of these concepts and their relevance to module theory can be found in \cite{Ba-Ge14b}. We also refer the reader to the monograph \cite{Ge-HK06a} for more information on factorization theory, and to \cite{Ba-Wi13a}, for a friendly introduction to the interplay of factorization theory and module theory.

\bigskip
\section{Preliminaries} \label{2}
\bigskip

We denote by $\N$ the set of positive integers and set $\N_0 = \N \cup \{0\}$. For real numbers $a, b \in \R$ we set $[a, b] = \{ x \in \Z : a \le x \le b \}$. For a subset $L \subset \Z$, we denote by $\Delta (L) \subset \N$ the set of distances of $L$. This is the set $\{l-k\colon k < l \in L\text{ and }L \cap [k, l] = \{k, l\}\}$. Let $G$ be an additive abelian group and let $A, B \subset G$ be subsets. Then $A+B = \{a+b : a \in A, b \in B \}$ denotes the sumset of $A$ and $B$, $-A = \{-a\,:\, a \in A \}$ is the negative of $A$, $g+A = \{g\}+A$ for $g \in G$, and $\langle A \rangle \subset G$ denotes the subgroup of $G$ generated by $A$.

\medskip

A family $(e_i)_{i \in I}$ of elements of $G$ is said to be {\it independent} if $e_i \ne 0$ for all $i \in I$ and, for every family $(m_i)_{i \in I} \in \Z^{(I)}$,
\[
\sum_{i \in I} m_ie_i =0 \qquad \text{implies} \qquad m_i e_i =0 \quad \text{for all} \quad i \in I\,.
\]
An independent family $(e_i)_{i \in I}$ is called a {\it basis} for $G$ if $G = \bigoplus_{i \in I} \langle e_i \rangle$.  If $G$ is torsion-free, then $\mathsf r (G) = \dim_{\Q} (G \tensor_{\Z} \Q)$ denotes the {\it rank} of $G$ and
\[
G_{|I|}^+ = \Big\{ \sum_{i \in I} \epsilon_i e_i : \epsilon_i \in \{0,1\} \ \text{for all} \ i \in I, \ \text{but not all equal to zero} \Big\} \subset \oplus_{i \in I} \langle e_i \rangle \cong \Z^{(I)}
\]
denotes the set of non-zero vertices of the hypercube in $\oplus_{i \in I} \langle e_i \rangle$. This definition clearly depends on the chosen basis, but throughout we refer to  $G_{|I|}^+$ only after a  basis has been fixed.

\medskip

By a {\it monoid} we always mean a commutative semigroup with identity which satisfies the cancellation law. Thus, if $R$ is a commutative ring and $R^{\bullet}$ its set of regular elements, then $R^{\bullet}$ is a multiplicative monoid. Let $H$ be a (multiplicatively written) monoid. We denote by $\mathsf q (H)$ a quotient group of $H$, by $H^{\times}$ the group of invertible elements of $H$, and by $\widehat H$ the complete integral closure of $H$ with $H \subset \widehat H \subset \mathsf q (H)$.
An element $u \in H$ is called an \emph{atom} if $u \not \in H^\times$ and $u=ab$ with $a,b \in H$ implies that either $a \in H^\times$ or $b \in H^\times$.
The set of all atoms of $H$ is denoted by $\mathcal A (H)$. We say that a monoid $H$ with identity $1$ is reduced if $H^{\times} = \{1\}$, and we denote by $H_{\red} = \{ a H^{\times} : a \in H\}$ the associated reduced monoid of $H$.

\medskip
\noindent
{\bf Free abelian monoids and groups.} A monoid $F$ is  {\it free abelian with basis $P \subset F$}  if every $a \in F$ has a unique representation of the form
\[
a = \prod_{p \in P} p^{\mathsf v_p(a) } \quad \text{with} \quad
\mathsf v_p(a) \in \N_0 \ \text{ and } \ \mathsf v_p(a) = 0 \ \text{
for almost all } \ p \in P \,.
\]
In this case, we set $F = \mathcal F(P)$. The isomorphism $a \mapsto (\mathsf v_p(a))_{p \in P}$ from the multiplicative monoid $\mathcal F (P)$  to the additive monoid $(\N_0^{(P)}, +)$ induces an isomorphism from the quotient group $\mathsf q (F)$ to $\Z^{(P)}$. We denote by $\mathcal F_{\rat} (P)$ the multiplicative monoid isomorphic to $(\Q_{\ge 0}^{(P)}, +)$ and we tacitly assume that $\mathcal F (P) \subset \mathcal F_{\rat} (P)$. The quotient group of $\mathcal F_{\rat} (P)$ is isomorphic to $(\Q^{(P)}, +)$ and an
element $a \in \mathsf q (\mathcal F_{\rat} (P) )$ will be written in the form $a =  \prod_{p \in P} p^{\mathsf v_p(a) } \quad \text{with} \quad
\mathsf v_p(a) \in \Q \ \text{ and } \ \mathsf v_p(a) = 0 \ \text{
for almost all } \ p \in P$. We
call
\[
  |a| = \sum_{p \in P} |\mathsf v_p (a)|  \in \Q_{\ge 0} \quad \text{the \ {\it
length}} \ \text{of} \ a \, \ \text{and} \ \supp (a) = \{p \in P : \mathsf v_p (a) \ne 0 \} \subset P \quad \text{the {\it support} of} \ a \,.
\]

Clearly there are inclusions $\N_0^{(P)} \subset \Z^{(P)} \subset \Q^{(P)} \subset \Q^P$. Elements of $\Q^P$ will (usually) be written as $\boldsymbol x$ and, if $\boldsymbol x \in \Q^P$, then we tacitly assume that $\boldsymbol x = (x_p)_{p \in P}$ and $\mathsf v_p (\boldsymbol x) = x_p$ for all $p \in P$. For $p \in P$, let $\boldsymbol e_p \in \N_0^{(P)}$ denote the standard vector with $e_{p, q} = 1$ if $p=q$ and $e_{p,q} = 0$ for all $q \in P \setminus \{p\}$. Then $(\boldsymbol e_p)_{p \in P}$ is the standard basis of $\Z^{(P)}$.

\medskip
\noindent
{\bf Factorizations and sets of lengths.} Let $H$ be a monoid. The free abelian monoid $\mathsf Z (H) = \mathcal F (\mathcal A (H_{\red}))$ is called the {\it factorization monoid} of $H$ and the unique homomorphism
\[
\pi \colon \mathsf Z (H) \to H_{\red} \quad \text{satisfying} \quad
\pi (u) = u \quad \text{for each} \quad u \in \mathcal A(H_\red)
\]
is called the  {\it factorization homomorphism} of $H$. For $a
\in H$,
\begin{flalign*}
\qquad\qquad & \mathsf Z_H (a) = \mathsf Z (a)  = \pi^{-1} (aH^\times) \subset \mathsf Z (H) \text{ is the {\it set of factorizations} of $a$,} & \\
& \mathsf L_H (a) = \mathsf L (a) = \bigl\{ |z| : z \in \mathsf Z (a) \bigr\} \subset \N_0 \text{ is the {\it set of lengths} of $a$,}\text{ and} & \\
& \mathcal L (H) = \{ \mathsf L (a) : a \in H \}  \text{ is the {\it system of sets of lengths} of $H$}. &
\end{flalign*}

We say that $H$ is {\it atomic} if $\mathsf Z(a) \ne \emptyset$ for each $a \in H$, that $H$ is an {\it \FF-monoid} if $\mathsf Z (a)$ is finite and nonempty for each $a \in H$, and that $H$ is {\it factorial} if $|\mathsf Z (a)| = 1$ for each $a \in H$. For the remainder of this section we assume that $H$ is atomic.

\medskip

Among the most well-studied invariants in factorization theory are those that describe the structure of sets of lengths of elements in $H$. Let $k \in \mathbb N$. If $H \ne H^{\times}$, then
\[
      \mathcal U_k (H) \ = \ \bigcup_{k \in L \in \mathcal L (H)} L
\]
is the {\it union of  sets of lengths} containing $k$. If $
H^\times=H$, we set $\mathcal U_k (H) =\{k\}$. In either case we define
$\rho_k (H) = \sup \mathcal U_k (H)$  and $\lambda_k (H)= \min
\mathcal U_k (H)$. Clearly, $\mathcal U_1 (H) =\{1\}$ and, for each $k\in \mathbb N$, $k
\in \mathcal U_k (H)$. In particular, $k+l \in \mathcal U_k (H) + \mathcal U_l (H) \subset \mathcal U_{k+l} (H)$ for each $k, l \in \N$. With $\Delta(L)$ the set of distances of a length set $L$,
\[
\Delta (H) = \bigcup_{L \in \mathcal L (H)} \Delta (L)
\]
denotes the {\it set of distances} of $H$. By definition, $\rho_k (H) = k$ for all $k \in \N$ if and only if $\mathcal U_k (H) = \{k\}$ for all $k \in \N$ if and only if $\Delta (H) = \emptyset$. In this case, $H$ is said to be {\it half-factorial}. If $\Delta (H) = \{d\}$ for some $d \in \N$, then for all $L \in \mathcal L (H)$ and for all $k \in \mathbb N$, $L$ and $\mathcal U_k (H)$ are arithmetical progressions with difference $d$.

\medskip

Let $M \in \mathbb N_0$, $d \in \N$, and  $\{0,d\} \subset \mathcal D \subset [0,d]$.
A subset  $L \subset \Z$  is called an {\it almost
arithmetical multiprogression} ({\rm AAMP}) with {\it difference} $d$, {\it period} $\mathcal D$, and {\it bound} $M$ if
\[
L = y + (L' \cup L^* \cup L'') \, \subset \, y + \mathcal D + d \Z \text{ where}
\]
\begin{itemize}
\item  $L^*$ is finite and nonempty with $\min L^* = 0$ and $L^* =
       (\mathcal D + d \Z) \cap [0, \max L^*]$,
\item $L' \subset [-M, -1]$,
\item $L'' \subset \max L^* + [1,M]$, and
\item $y \in \Z$.
\end{itemize}
Note that every AAMP is a finite non-empty subset of $\mathbb Z$ and that an  AAMP with period $\{0,d\}$ and bound $M=0$ is a (usual) arithmetical progression with difference $d$.

\medskip
\noindent
{\bf Distance between factorizations and catenary degrees.}
Let $z,\, z' \in \mathsf Z (H)$. Then we can write
\[
z = u_1 \cdot \ldots \cdot u_lv_1 \cdot \ldots \cdot v_m \quad
\text{and} \quad z' = u_1 \cdot \ldots \cdot u_lw_1 \cdot \ldots
\cdot w_n\,,
\]
where  $l,\,m,\, n\in \N_0$ and $u_1, \ldots, u_l,\,v_1, \ldots,v_m,\,
w_1, \ldots, w_n \in \mathcal A(H_\red)$ are such that
\[
\{v_1 ,\ldots, v_m \} \cap \{w_1, \ldots, w_n \} = \emptyset\,.
\]
Then $\gcd(z,z')=u_1\cdot\ldots\cdot u_l$ and we call
\[
\mathsf d (z, z') = \max \{m,\, n\} = \max \{ |z \gcd (z, z')^{-1}|,
|z' \gcd (z, z')^{-1}| \} \in \N_0
\]
the {\it distance} between $z$ and $z'$. The  {\it catenary degree} $\mathsf c(a)$ of an element
$a \in H$ is the smallest $N \in \N_0 \cup \{ \infty\}$ such that, for
      any two factorizations $z$ and $z'$ of $a$, there exists a finite
sequence $z = z_0\,, \, z_1\,, \ldots, z_k = z'$ \ of
factorizations of \ $a$ \ such that \ $\mathsf d (z_{i-1}, z_i) \le
N \text{ for all } i \in [1,k]$.
We denote by
\[
\mathsf c(H) = \sup \{ \mathsf c(a) : a \in H\} \in \N_0 \cup\{\infty\}
\,
\]
the {\it catenary degree} of $H$. By definition, $|\mathsf Z (a)| = 1$ (i.e., $a$ has unique factorization in $H$) if and only if $\mathsf c (a) = 0$, and thus $H$ is factorial if and only if $\mathsf c (H) = 0$. Suppose now that $H$ is not factorial. Then it is an easy consequence of the definitions that $2 + \sup \Delta (H) \le \mathsf c (H)$. In particular, if $\mathsf c (H) = 2$, then $\Delta (H) = \emptyset$, and if $\mathsf c (H) = 3$, then $\Delta (H) = \{1\}$.

\medskip
\noindent
{\bf The $\omega$-invariants and tame degrees.} We now recall the $\omega$-invariants as well as local and global tame degrees. These are well-studied invariants in the factorization theory of rings and semigroups. They have also been considered in module-theoretic situations in terms of the so-called {\it semi-exchange property} (see \cite{Di07a}).

For  an element $a$ in an atomic monoid $H$,  let  $\omega (H,a)$ denote the
      smallest  $N \in \N_0 \cup \{\infty\}$ having the following property{\rm \,:}
      \begin{enumerate}
      \smallskip
      \item[] For any multiple $b$ of $a$ and any factorization $b = v_1 \cdot \ldots \cdot v_n$ of $b$,  there exists a
              subset $\Omega \subset [1,n] $ with $|\Omega | \le N $ such that
              \[
              a \Bigm| \, \prod_{\nu \in \Omega} v_\nu \,.
              \]
      \end{enumerate}
We then define
\[
\omega (H) = \sup \{ \omega (H, u) : u \in \mathcal A (H) \} \in \mathbb N_0 \cup \{\infty\} \,.
\]
It is then clear that an atom $u \in H$ is prime if and only if $\omega (H, u) = 1$ and thus $H$ is factorial if and only if $\omega (H) \le 1$.  If $H$ satisfies the ascending chain condition (ACC) on divisorial ideals (in particular, if $H$ is Krull or $H=R^\bullet$ where $R$ is a Noetherian domain), then
$\omega (H, u) < \infty$ for all $u \in \mathcal A (H)$ \cite[Theorem 4.2]{Ge-Ha08a}.

\medskip

Roughly speaking, for an element $u$ in an atomic monoid $H$, the tame degree $\mathsf t (H,u)$ is the maximum of $\omega (H,u)$ and the factorization lengths of $u^{-1} \prod_{\nu \in \Omega} v_{\nu}$ in $H$. For an atom $u \in H_{\red}$, the {\it local tame degree} $\mathsf t (H, u)$ is the smallest $N \in \N_0 \cup \{\infty\}$ with the following property:
\begin{enumerate}
\smallskip
\item[] For any multiple $a \in H_\red$ of $u$ and any factorization $z = v_1 \cdot \ldots \cdot v_n \in \mathsf Z (a)$ of $a$ which does not contain $u$, there is a short subproduct which is a multiple of $u$, say $v_1 \cdot \ldots \cdot v_m$, and a refactorization of this subproduct which contains $u$, say $v_1 \cdot \ldots \cdot v_m = u u_2 \cdot \ldots \cdot u_{\ell}$, such that $\max \{\ell, m \} \le N$.
 \smallskip
      \end{enumerate}
In particular, the local tame degree $\mathsf t (H, u)$ measures the distance between any factorization of a multiple $a$ of $u$ and a factorization of $a$ which contains $u$.  We denote by
      \[
      \mathsf t (H) = \sup \big\{ \mathsf t (H, u) : u \in \mathcal A (H_{\red}) \big\} \in \N_0 \cup \{\infty\}
      \]
the ({\it global}) {\it tame degree} of $H$, and we say that $H$ is tame if $\mathsf t (H) < \infty$. If $u$ is prime, then $\mathsf t (H, u) = 0$ and thus $H$ is factorial if and only if $\mathsf t (H) = 0$. In order to describe the relationship between the $\omega$-invariants and the tame degree, we observe that
\[
\begin{split}
  \omega(H,a) = \sup\big\{\, k \in \N_0 \cup \{\infty\} : &\text{ $b=u_1\cdot \ldots \cdot u_k \in aH$ with $k \in \N_0$, $u_1,\ldots,u_k \in \cA(H)$,} \\ &\text{ and $u \nmid u_i^{-1} b$ for all $i \in [1,k]$} \,\big\} \,,
\end{split}
\]
and we define the $\tau$-invariant as
\begin{equation} \label{eq:tau}
\begin{split}
  \tau(H,a) = \sup\big\{\, \min \sL(a^{-1}b) : &\text{ $b=u_1\cdot \ldots \cdot u_k \in aH$ with $k \in \N_0$, $u_1,\ldots,u_k \in \cA(H)$,} \\ &\text{ and $a \nmid u_i^{-1} b$ for all $i \in [1,k]$} \,\big\}.
\end{split}
\end{equation}

For each non-prime $u \in \cA(H)$, $\st(H,uH^{\times}) = \max\{\,\omega(H,u),\, \tau(H,u) + 1 \,\}$ (\cite[Theorem 3.6]{Ge-Ha08a}). Suppose that $H$ is half-factorial and $\sL(a) = \{\,l\,\}$ with $l \in \N_0$. Then $\sL(a^{-1}b) = \{\, k - l \,\}$ and hence $\omega(H,a) = \tau(H,a) + l$. In particular, if $u \in \cA(H)$, then $\omega(H,u) = \tau(H,u) + 1$.

\medskip

We recall that if $H_{\red}$ is finitely generated, then $H$ is tame (\cite[Theorem 3.1.4]{Ge-HK06a}). Tame monoids will be studied in Proposition \ref{3.1}, Theorem \ref{6.3}, and Theorem \ref{hnp-proj}. In Section \ref{5} we will characterize when the monoid $\mathcal V((\mathcal C)$ is tame for $\mathcal C$ a class of finitely generated modules over a commutative Noetherian local ring (Proposition \ref{5.1}). Further examples of tame monoids can be found in \cite{Ge-Ka10a, Ka14a}.
We now gather together several arithmetical finiteness properties of tame monoids.

\smallskip
\begin{proposition}[\bf Arithmetic of tame monoids] \label{2.1}~

Let $H$ be a tame monoid.
\begin{enumerate}
      \item If $H$ is not factorial, then  $2 + \sup \Delta (H) \le \mathsf c (H) \le \omega (H) \le \mathsf t (H) \le \omega (H)^2 < \infty$.

      \item There is a constant $M  \in \N_0$  such that every set of lengths $L \in \mathcal L (H)$ is an {\rm AAMP} with difference $d \in \Delta (H)$ and bound $M$.

      \item There is a  constant $M \in \mathbb N_0$ such that, for every $k \ge 2$, the union $\mathcal U_k (H)$ of sets of lengths is an {\rm AAMP} with period $\{0, \min \Delta (H) \}$ and bound $M$.
\end{enumerate}
\end{proposition}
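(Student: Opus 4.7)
The plan is to establish the three parts in turn, with the combinatorial heavy-lifting for (2) and (3) imported from the monograph \cite{Ge-HK06a} rather than reproved from scratch.

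For (1), the goal is to prove the chain of inequalities $2+\sup\Delta(H)\le\mathsf c(H)\le\omega(H)\le\mathsf t(H)\le\omega(H)^2$ left to right. The leftmost bound comes directly from the definitions: if $d=\max\Delta(\mathsf L(a))$ is witnessed by a maximal gap $\mathsf L(a)\cap[k,k+d]=\{k,k+d\}$, then any catenary chain from a factorization of $a$ of length $k$ to one of length $k+d$ must at some step jump across the gap, and that step has distance at least $d+2$ because the two adjacent factorizations in the step share no common atom (else an intermediate length would appear) and each side retains length at least $1$ after removing the gcd. For $\mathsf c(H)\le\omega(H)$ the idea is to pass from any factorization of $a$ to any other by iteratively replacing a subproduct of length at most $\omega(H)$ by an equivalent subproduct containing a prescribed atom, exactly as the $\omega$-invariant permits; at each step the distance between consecutive factorizations is then at most $\omega(H)$. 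The inequality $\omega(H)\le\mathsf t(H)$ follows from the identity $\mathsf t(H,u)=\max\{\omega(H,u),\tau(H,u)+1\}$ recalled in Section~\ref{2}. Finally, $\mathsf t(H)\le\omega(H)^2$ is obtained by bounding $\tau(H,u)$ in terms of $\omega(H,u)$: any witness $b$ for $\omega(H,u)$ has at most $\omega(H)$ atoms, and applying $\omega$ once more to control $\min\mathsf L(u^{-1}b)$ yields the quadratic bound. Tameness then gives the overall finiteness.

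For (2) and (3) the plan is to invoke the structure theorems for sets of lengths in a tame monoid, established in \cite[Chapter~4]{Ge-HK06a}. The main obstacle in both cases is obtaining the constant $M$ \emph{uniformly} in the element $a$ (for (2)) and in the index $k$ (for (3)). Tameness is the key ingredient: the uniform bound on $\omega(H,u)$ over $u\in\mathcal A(H)$ forces the extremal behaviour of each $\mathsf L(a)$, and of each $\mathcal U_k(H)$, to deviate from an arithmetic skeleton with difference $d\in\Delta(H)$ only within a window whose size depends on $H$ alone, not on $a$ or $k$. For the unions in (3), the additional ingredient is the inclusion $\mathcal U_k(H)+\mathcal U_l(H)\subset\mathcal U_{k+l}(H)$ recorded in Section~\ref{2}, which transports local regularity at small indices to all $k$ and forces the period to collapse to $\{0,\min\Delta(H)\}$. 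The real work in a self-contained proof would lie in these AAMP structure theorems; in the present paper I would cite the corresponding statements from \cite{Ge-HK06a} directly.
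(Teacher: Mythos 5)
Your treatment of parts (2) and (3) follows the paper's approach of citing structure theorems rather than reproving them, although the paper cites \cite[Theorem 5.1]{Ge-Ka10a} for (2) and \cite[Theorems 4.2 and 3.5]{Ga-Ge09b} for (3), whereas you point to \cite[Chapter 4]{Ge-HK06a}. The Structure Theorem for Sets of Lengths in that monograph is formulated for BF-monoids under a specific structural hypothesis, and the version applicable to arbitrary tame monoids, together with the uniform bound on the AAMP constant $M$, appears in \cite{Ge-Ka10a}; the uniform statement about $\mathcal U_k(H)$ in (3) is specifically from \cite{Ga-Ge09b}. This is a citation imprecision rather than a mathematical gap, but you should use the correct references. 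Your argument for $2+\sup\Delta(H)\le\mathsf c(H)$, your use of $\mathsf t(H,u)=\max\{\omega(H,u),\tau(H,u)+1\}$ to get $\omega(H)\le\mathsf t(H)$ (together with the observation that a non-factorial tame monoid has $\mathsf t(H)\ge 2$, so the prime atoms with $\omega(H,u)=1$ cause no trouble), and your sketch of $\mathsf t(H)\le\omega(H)^2$ are all in the right spirit.

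The genuine gap is in the step $\mathsf c(H)\le\omega(H)$. Your plan is to pass from one factorization of $a$ to another by iteratively replacing a subproduct of length at most $\omega(H)$ with a refactorization containing a prescribed atom, and you assert the distance at each step is at most $\omega(H)$. This is false as stated: if the old subproduct $v_1\cdots v_m$ with $m\le\omega(H)$ is replaced by $u\,u_2\cdots u_\ell$, the distance of that step is $\max\{m,\ell\}$, and the definition of the $\omega$-invariant gives no control whatsoever on $\ell$. Controlling both $m$ and $\ell$ simultaneously is precisely what the tame degree does, so the argument you sketch proves $\mathsf c(H)\le\mathsf t(H)$ (the classical inequality from \cite[Theorem 1.6.3]{Ge-HK06a}), not the sharper $\mathsf c(H)\le\omega(H)$. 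That sharper inequality is a non-trivial result of \cite[Section 3]{Ge-Ka10a} and requires a genuinely different argument; it cannot be obtained by the naive atom-exchange chain you describe.
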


\begin{proof}
Statement 1 follows easily from \cite[Theorem 1.6.3]{Ge-HK06a} and \cite[Section 3]{Ge-Ka10a}.
Statement 2 follows from \cite[Theorem 5.1]{Ge-Ka10a} and statement 3 follows from \cite[Theorems 4.2 and 3.5]{Ga-Ge09b}.
\end{proof}

\medskip
\noindent
{\bf Transfer homomorphisms.}
A monoid homomorphism \ $\theta \colon H \to B$ is called a \ {\it transfer homomorphism} if the following properties are satisfied:

\begin{enumerate}
\item[{\bf (T\,1)\,}] $B = \theta(H) B^\times$ \ and \ $\theta^{-1} (B^\times) = H^\times$.

\smallskip

\item[{\bf (T\,2)\,}] If $u \in H$, $b,c \in B$, and \ $\theta
(u) = bc$, then there exist $v,w \in H$  such that $u = vw$, $\theta (v) \simeq b$, and $\theta (w) \simeq c$.
\end{enumerate}

Transfer homomorphisms are a central tool in Factorization Theory and allow one to lift arithmetical results from a (simpler) monoid $B$ to the monoid $H$ (of original interest). These homomorphisms will be used throughout this manuscript (see, in particular, Propositions \ref{3.1}, \ref{monext}, and \ref{acm-factorization}). Each transfer homomorphism naturally gives rise to a homomorphism $\overline \theta$ of factorization monoids that extends $\theta$. Let $\theta \colon H \to B$ be a transfer homomorphism of atomic monoids. Then $\theta$ induces a homomorphism $\overline \theta \colon \mathsf Z (H) \to \mathsf Z (B)$ satisfying $\overline \theta (u H^{\times}) = \theta (u) B^{\times}$ for all $u \in \mathcal A (H)$.

\medskip

We now recall how various factorization-theoretic invariants are preserved by transfer homomorphisms. Let $\theta:H \rightarrow B$ and $\overline \theta: \mathsf Z(H) \rightarrow \mathsf Z(B)$ be as above. For $a \in H$, we denote by $\mathsf c (a, \theta)$ the smallest $N \in \N_0 \cup \{\infty\}$ with the following property:

\smallskip

\begin{enumerate}
\item[]
If $z,\, z' \in \mathsf Z_H (a)$ and $\overline \theta (z) =
\overline \theta (z')$, then there exist $k \in \N_0$ and factorizations $z=z_0,
\ldots, z_k=z' \in \mathsf Z_H (a)$ such that \ $\overline \theta (z_i) = \overline \theta (z)$ and \ $\mathsf d (z_{i-1}, z_i) \le N$ for all $i \in [1,k]$; that is,
$z$ and $z'$ can be concatenated by an $N$-chain in the fiber
\ $\mathsf Z_H (a) \cap \overline \theta ^{-1} (\overline \theta
(z))$.
\end{enumerate}
Now
\[
\mathsf c (H, \theta) = \sup \{\mathsf c (a, \theta) : a \in H \} \in \N_0 \cup \{\infty\}
\]
denotes the catenary degree in the fibres of $\theta$. The next lemma summarizes what will be needed in the sequel (a proof can be found in \cite[Section 3.2]{Ge-HK06a}).

\smallskip
\begin{lemma} \label{2.2}
Let $\theta \colon H \to B$ be a transfer homomorphism.
\begin{enumerate}
\item $H$ is atomic if and only if $B$ is atomic.

\smallskip
\item For all $a \in H$, $a$ is an atom of $H$ if and only if $\theta (a)$ is an atom of $B$.

\smallskip
\item Suppose that $H$ is atomic.
      \begin{enumerate}
      \item
      For all $a \in H$, $\mathsf L_H (a) = \mathsf L_{B} (\theta (a))$. In particular,
      \smallskip
      \begin{enumerate}
      \item $\mathcal L (H) = \mathcal L (B)$ and $\Delta (H) = \Delta (B)$, and
      \smallskip
      \item  $\mathcal U_k (H) = \mathcal U_k (B)$, $\rho_k (H) = \rho_k (B)$, and $\lambda_k (H) = \lambda_k (B)$ for each $k \in \mathbb N.$
      \end{enumerate}
      \smallskip
      \item For all $a \in H$, $\mathsf c_B ( \theta (a)) \le \mathsf c_H (a) \le \max \{ \mathsf c_B ( \theta (a)), \mathsf c (a, \theta)\}$. In particular,
      \smallskip
      \begin{enumerate}
      \item [] $\mathsf c (B) \le \mathsf c (H) \le \max \{ \mathsf c (B), \mathsf c (H, \theta)\}.$
      \end{enumerate}
      \end{enumerate}
\end{enumerate}
\end{lemma}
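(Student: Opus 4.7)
The plan is to prove the items in order, treating parts 1 and 2 in tandem (so that atom-preservation is available while establishing atomicity), and then using the resulting lifting machinery for the length sets and catenary chains. Only (T1) and (T2) are needed.

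For part 2, if $a \in \mathcal A(H)$ then $a \notin H^\times$ forces $\theta(a) \notin B^\times$ by (T1); any decomposition $\theta(a) = bc$ in $B$ lifts via (T2) to $a = vw$ with $\theta(v) \simeq b$ and $\theta(w) \simeq c$, and irreducibility of $a$ together with (T1) pushes the invertibility from $\{v,w\}$ to $\{b,c\}$. The converse merely pushes a decomposition $a = vw$ forward through $\theta$. Part 1 then follows: going $H \to B$, use $B = \theta(H) B^\times$ to write every non-unit of $B$ as $\theta(h)u$ with $h \notin H^\times$, atomize $h$ in $H$, and apply part 2 atom-by-atom; going $B \to H$, atomize $\theta(a)$ in $B$ and lift iteratively by (T2), the lifted factors being atoms by part 2 in reverse.

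For part 3(a), the induced monoid map $\overline\theta \colon \mathsf Z(H) \to \mathsf Z(B)$ preserves length by construction, and the iterated lifting from (T2) makes its restriction $\mathsf Z_H(a) \to \mathsf Z_B(\theta(a))$ surjective for every $a \in H$. Together these give $\mathsf L_H(a) = \mathsf L_B(\theta(a))$. Invoking (T1) to write every $b \in B$ as $\theta(h)u$ with $u \in B^\times$ yields $\mathsf L_B(b) = \mathsf L_H(h)$, hence $\mathcal L(H) = \mathcal L(B)$, and the equalities for $\Delta$, $\mathcal U_k$, $\rho_k$, $\lambda_k$ are formal consequences of this identity of length-set systems (with the case $H = H^\times \Leftrightarrow B = B^\times$ handled separately via (T1)).

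For part 3(b), the key combinatorial observation is that $\mathsf d(\overline\theta(z), \overline\theta(z')) \le \mathsf d(z, z')$: common atomic factors of $z, z'$ map to common atomic factors of $\overline\theta(z), \overline\theta(z')$, and further cancellation may occur on the $B$-side. Combined with fiber-surjectivity of $\overline\theta$, this gives the lower bound $\mathsf c_B(\theta(a)) \le \mathsf c_H(a)$. For the upper bound, given $z, z' \in \mathsf Z_H(a)$, choose a $\mathsf c_B(\theta(a))$-chain $\overline\theta(z) = w_0, \ldots, w_k = \overline\theta(z')$ in $\mathsf Z_B(\theta(a))$, lift it inductively to $z = z_0, z_1, \ldots, z_k$ in $\mathsf Z_H(a)$ with $\overline\theta(z_i) = w_i$ and $\mathsf d(z_{i-1}, z_i) \le \mathsf d(w_{i-1}, w_i)$, and finally connect $z_k$ to $z'$ inside the fiber over $w_k = \overline\theta(z')$ using $\mathsf c(a, \theta)$. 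Taking suprema over $a \in H$ and again invoking (T1) to identify $\sup_{b \in B} \mathsf c_B(b)$ with $\sup_{a \in H} \mathsf c_B(\theta(a))$ yields the global inequality. The main obstacle is this inductive lifting step: producing $z_i$ from $z_{i-1}$ with distance controlled by $\mathsf d(w_{i-1}, w_i)$ requires applying (T2) to exactly the symmetric difference $w_i \gcd(w_{i-1}, w_i)^{-1}$ while keeping the common sub-factorization of $z_{i-1}$ intact; once this refined lifting lemma is in hand, everything else is routine bookkeeping on length and distance functions.
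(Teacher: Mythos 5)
Your argument is correct and is essentially the standard proof of these transfer-homomorphism facts; the paper itself gives no proof here, only citing \cite[Section 3.2]{Ge-HK06a}, and your approach (atom-preservation via {\bf (T2)} plus {\bf (T1)}, iterated lifting for atomicity and fiber-surjectivity of $\overline\theta$, the length-preservation and distance-contraction observations, and the chain-splicing via $\mathsf c(a,\theta)$) is precisely the one used there. The ``refined lifting lemma'' you flag in 3(b) does go through as you indicate: factor $z_{i-1} = z_g z_s$ by choosing atoms of $z_{i-1}$ mapping onto $\gcd(w_{i-1},w_i)$, lift $w_i\gcd(w_{i-1},w_i)^{-1}$ to a factorization of $\pi(z_s)$, and keep $z_g$ fixed, giving $\mathsf d(z_{i-1},z_i)\le\max\{|z_s|,|z_t|\}=\mathsf d(w_{i-1},w_i)$.
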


\medskip
\noindent
{\bf Generalized Krull monoids.} Let $H$ and $D$ be monoids. A monoid homomorphism $\varphi \colon H \to D$   is called
\begin{itemize}
\smallskip
\item a  {\it divisor homomorphism} if $\varphi(a)\mid\varphi(b)$ implies that $a \mid b$  for all $a,b \in H$.

\smallskip
\item  a {\it divisor theory} (for $H$) if $D = \mathcal F(P)$ for some set $ P\subset D$, $\varphi$ is a divisor homomorphism, and, for every
$a \in \mathcal{F}( P)$, there exists a finite nonempty subset $X \subset H$ such that $a = \gcd \bigl( \varphi(X) \bigr)$.

\smallskip
\item {\it cofinal} if for every $\alpha \in D$ there exists $a \in H$ such that $\alpha \t \varphi (a)$.
\end{itemize}
If $H \subset D$ is a submonoid, then $H \subset D$ is said to be {\it saturated} (or {\it cofinal}) if the embedding $H \hookrightarrow D$ is a divisor homomorphism (or is cofinal). The monoid $H$ is called a
\begin{itemize}
\item {\it rational generalized Krull monoid} if there is a divisor homomorphism $\varphi \colon H \to \mathcal F_{\rat} (P)$ for some set $P$.

\smallskip
\item {\it Krull monoid} if there is a divisor homomorphism $\varphi \colon H \to \mathcal F (P)$ for some set $P$.
\end{itemize}
We note that every Krull monoid is a rational generalized Krull monoid and that a monoid $H$ is a (rational generalized) Krull monoid if and only if $H_{\red}$ is a (rational generalized) Krull monoid. Generalized Krull monoids and domains have been studied in \cite[Section 5]{Ge-Mo95}, \cite[Chapter 22]{HK98}, and \cite{Ch-HK-Kr02}. Specifically, \cite[Proposition 2]{Ch-HK-Kr02} guarantees that the definition of rational generalized Krull monoids we have given above coincides with the usual one.

\medskip

We note that a monoid is Krull if and only if it is completely integrally closed and $v$-noetherian and that every Krull monoid is an \FF-monoid and has a divisor theory. Let $H$ be a Krull monoid and let $\varphi \colon H \to D = \mathcal F (P)$ be a cofinal divisor homomorphism. We call $\mathcal{C}(\varphi)=\mathsf q (D)/ \mathsf q (\varphi(H))$ the {\it class group} of $\varphi $ and use additive notation for this group. For $a \in \mathsf q(D)$, we denote by $[a] = [a]_{\varphi} = a \,\mathsf q(\varphi(H)) \in \mathsf q (D)/ \mathsf q (\varphi(H))$ the class containing $a$. Since $\varphi$ is a cofinal divisor homomorphism, $\mathcal{C}(\varphi) = \{[a] : a \in D \}$ and $\varphi(H)= \{a \in D\, :\, [a]=[1]\}$. The set
\[
G_P = \{[p] = p \mathsf q (\varphi(H))\, :\, p \in P \} \subset \mathcal{C}(\varphi)
\]
of classes containing prime divisors plays a crucial role in arithmetic computations (see Proposition \ref{3.1}) and we have $[G_P] = \mathcal{C}(\varphi)$. If $\varphi$ is a divisor theory, then $\varphi$ and the class group $\mathcal C (\varphi)$ are unique up to isomorphism. Since $\mathcal C (\varphi)$ depends only on $H$, we denote it by  $\mathcal C (H)$ and call it the {\it class group of} $H$. Moreover, a reduced Krull monoid $H$ with divisor theory $H \hookrightarrow \mathcal F(P)$ is uniquely determined up to isomorphism by its \emph{characteristic} $(G, (m_g)_{g \in G})$ where $G$ is an abelian group together with an isomorphism $\Phi: G \rightarrow \mathcal C(H)$ and where $(m_g)_{g\in G}$ is a family of cardinal numbers $m_g=| P \cap \Phi(g)|$ (see \cite[Theorem 2.5.4]{Ge-HK06a}). We consider the characteristic of a monoid of modules $\mathcal V(\mathcal C)$ that is Krull in both Sections \ref{5} and \ref{7}.

\medskip

A domain $R$ is a rational generalized Krull domain if and only if $R^{\bullet}$ is a rational generalized Krull monoid (for recent work on these kinds of domains, see \cite{Li12b, Pa-Te10a}). A $v$-Marot ring (and, in particular, a domain) $R$ is a Krull ring if and only if its multiplicative monoid of regular elements $R^{\bullet}$ is a Krull monoid (\cite[Theorem 3.5]{Ge-Ra-Re14c}), and we set $\mathcal C (R) = \mathcal C (R^{\bullet})$. If $R$ is a Dedekind domain, then $\mathcal C (R)$ is the Picard group of $R$.

\medskip

We now introduce a Krull monoid of  combinatorial flavor, the monoid $\mathcal B (G_0)$ of zero-sum sequences over a subset $G_0$ of an abelian group $G$. As previously mentioned, this monoid will play a crucial role in arithmetical investigations of general Krull monoids. Section \ref{4} provides a detailed study of $\mathcal B(G_0)$ in case of subsets $G_0$ of finitely generated free abelian groups.
Let $G$ be an additive abelian group, $G_0 \subset G$ a subset, and $S = g_1 \cdot \ldots \cdot g_l \in \mathcal F (G_0)$. We call $\sigma (S) = g_1+ \cdots + g_l \in G$ the sum of $S$, and we  define, for $k \in \N$,
\ber\nn \Sigma
(S) &=& \Bigl\{ \sum_{i \in I} g_i :\;\emptyset \ne I \subset
[1,\ell] \Bigr\} \ \subset \ G
\,,
 \\ \nn \Sigma_k(S) &=& \Bigl\{ \sum_{i \in I} g_i :\; I \subset
[1,\ell] ,\,|I|=k\Bigr\} \ \subset \ G \quad \text{and} \quad \Sigma_{\le k} (S) = \bigcup_{\nu \in [1, k]} \Sigma_{\nu} (S) .\eer
If $(e_1, \ldots, e_r)$ is a basis of $G$, then the set $G_r^+$ of nonzero vertices of the hypercube satisfies
\[
G_r^+ = \Sigma (e_1 \bdot \ldots \bdot e_r) \,.
\]
This set will be thoroughly studied in Sections \ref{4} and \ref{5}.

\medskip

For a map $\varphi \colon G \to G'$ between two abelian groups $G$ and $G'$, we define $\varphi (S) = \varphi (g_1) \bdot \ldots \bdot \varphi (g_\ell)$. Also, for $S \in \mathcal F (G_0)$, we set $-S = (-g_1) \cdot \ldots \cdot (-g_\ell)$. Clearly
\[
\mathcal B (G_0) = \{ S \in \mathcal F (G_0) : \sigma (S) = 0 \} \subset \mathcal F (G_0)
\]
 is a submonoid of $\mathcal F(G_0)$. Moreover, since the inclusion $\mathcal B (G_0) \hookrightarrow \mathcal F (G_0)$ is a divisor homomorphism, $\mathcal B (G_0)$ is a Krull monoid. For arithmetical invariants $* (\cdot)$, as defined previously, we write (as it is usual) $* (G_0)$ instead of $* ( \mathcal B (G_0))$. In particular, $\mathcal A (G_0)$ denotes the set of atoms of $\mathcal B (G_0)$, $\Delta (G_0)$ denotes the set of distances of $\mathcal B (G_0)$, and so forth.
Note that the atoms of $\mathcal B (G_0)$ are precisely the minimal zero-sum sequences over $G_0$ --- those zero-sum sequences having no proper subsequence that is also a zero-sum sequence --- and we denote by
\[
\mathsf D (G_0) = \sup \{ |U| : U \in \mathcal A (G_0) \} \in \mathbb N_0 \cup \{\infty\}
\]
the {\it Davenport constant} of $G_0$, a central invariant in zero-sum theory (\cite{Ge09a, Sm10a, Ge-Li-Ph12, Gr13a}). The following lemma highlights the close connection between the arithmetic of a general Krull monoid and the arithmetic of the associated monoid of zero-sum sequences. A proof can be found in \cite[Theorems 3.4.2 and  3.4.10]{Ge-HK06a}.

\smallskip
\begin{proposition} \label{3.1}
Let $H$ be a Krull monoid, $\varphi \colon H \to D = \mathcal F (P)$
a cofinal divisor homomorphism, $G = \mathcal C (\varphi)$ its class
group, and $G_{P} \subset G$ the set of classes containing prime
divisors. Let $\widetilde{\boldsymbol \beta} \colon D \to \mathcal F
(G_{ P})$ denote the unique homomorphism defined by
$\widetilde{\boldsymbol \beta} (p) = [p]$ for all $p \in P$.
\begin{enumerate}
\item The homomorphism $\boldsymbol \beta = \widetilde{\boldsymbol \beta} \circ \varphi \colon H \to \mathcal B
      (G_{P})$ is a transfer homomorphism. Moreover,
      \begin{itemize}
      \item [] $\mathsf c (H, \boldsymbol \beta ) \le 2$, $\mathsf c (G_P) \le \mathsf c (H) \le \max \{\mathsf c (G_P), 2\}$, and $\mathsf c (H) \le \mathsf D (G_P)$.
      \end{itemize}
\smallskip
\item If $G_P$ is finite, then $\mathcal A (G_P)$ is finite and hence $\mathsf D (G_P) < \infty $.

\smallskip
\item If $\mathsf D (G_P) < \infty$, then both $H$ and $\mathcal B (G_P)$ are tame.
\end{enumerate}
\end{proposition}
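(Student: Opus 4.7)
The plan is to verify (T1) and (T2) directly for $\boldsymbol\beta$, extract the catenary degree bounds from the structure of the fibres, and then exploit the realization $\boldsymbol\beta\colon H\to\mathcal B(G_P)$ together with Lemma~\ref{2.2} to deduce parts~2 and~3 from combinatorial facts about $\mathcal B(G_P)$.

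First I would check that $\boldsymbol\beta$ maps into $\mathcal B(G_P)$: for $a\in H$, writing $\varphi(a)=p_1\cdot\ldots\cdot p_\ell\in D$, the sum $[p_1]+\cdots+[p_\ell]$ equals $[\varphi(a)]=0$ in $\mathcal C(\varphi)$, so $\boldsymbol\beta(a)\in\mathcal B(G_P)$. For (T1), surjectivity uses cofinality plus the characterization $\varphi(H)=\{a\in D : [a]=[1]\}$: given $S=g_1\cdot\ldots\cdot g_\ell\in\mathcal B(G_P)$, pick $p_i\in P$ with $[p_i]=g_i$, so that $p_1\cdots p_\ell\in\varphi(H)$ by the class relation; hence there is $a\in H$ with $\varphi(a)=p_1\cdots p_\ell$ and $\boldsymbol\beta(a)=S$. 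The condition $\boldsymbol\beta^{-1}(\{1\})=H^\times$ follows because $\varphi$ is a divisor homomorphism. For (T2), given $\boldsymbol\beta(u)=ST$ in $\mathcal B(G_P)$, lift the factorization of $\widetilde{\boldsymbol\beta}(\varphi(u))$ inside the free monoid $\mathcal F(P)$ to $\varphi(u)=xy$, and use the divisor homomorphism property of $\varphi$ to produce $v,w\in H$ with $u=vw$, $\varphi(v)=x$, $\varphi(w)=y$; then $\boldsymbol\beta(v)\simeq S$, $\boldsymbol\beta(w)\simeq T$.

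For the catenary bounds, the bound $\mathsf c(H,\boldsymbol\beta)\le 2$ is the core calculation: two factorizations $z,z'$ of the same $a\in H$ with $\overline{\boldsymbol\beta}(z)=\overline{\boldsymbol\beta}(z')$ differ only by replacing prime divisors by others in the same class. I would connect them by a sequence of single-atom swaps, each swap having distance at most $2$; the argument reduces to showing that if $u\cdot v = u'\cdot v'$ with $\boldsymbol\beta(u)=\boldsymbol\beta(u')$, one can transpose inside the fibre. Combining with Lemma~\ref{2.2}(3)(b) gives $\mathsf c(G_P)\le\mathsf c(H)\le\max\{\mathsf c(G_P),2\}$. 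The inequality $\mathsf c(G_P)\le\mathsf D(G_P)$ is classical: any two factorizations of $B\in\mathcal B(G_P)$ can be bridged by replacing a zero-sum subsequence of length $\le\mathsf D(G_P)$, hence $\mathsf c(H)\le\mathsf D(G_P)$.

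For~2, when $G_P$ is finite, $\mathcal B(G_P)$ is a saturated submonoid of the free abelian monoid $\mathcal F(G_P)\cong\N_0^{|G_P|}$; Dickson's lemma shows any subset of $\N_0^n$ has finitely many minimal elements, so $\mathcal A(G_P)$ is finite and in particular $\mathsf D(G_P)<\infty$. For~3, assuming $\mathsf D(G_P)<\infty$, a direct count bounds $\omega(G_P,U)$ for each atom $U$ by a polynomial in $\mathsf D(G_P)$ (any product of atoms divisible by $U$ has a sub-product of length at most $\mathsf D(G_P)$ divisible by $U$), and then Proposition~\ref{2.1}(1) together with the formula $\mathsf t\le\omega^2$ yields $\mathsf t(G_P)<\infty$. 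To transfer tameness to $H$, combine the $\mathcal B(G_P)$ bound with $\mathsf c(H,\boldsymbol\beta)\le 2$ and an $\omega$-estimate that lifts through $\boldsymbol\beta$: since fibres of $\boldsymbol\beta$ over an atom consist of atoms of $H_\text{red}$ that differ only by a choice of prime divisor in a fixed class, local tameness in $H$ is bounded by local tameness in $\mathcal B(G_P)$ plus a constant. The main obstacle is this last step — carefully quantifying how the fibre structure of $\boldsymbol\beta$ contributes to $\mathsf t(H,u)$ — everything else is bookkeeping within the framework of Lemma~\ref{2.2}.
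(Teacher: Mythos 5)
The paper does not give a proof of Proposition~\ref{3.1}; it simply cites \cite[Theorems 3.4.2 and 3.4.10]{Ge-HK06a}. Your proposal attempts a genuine argument, and the parts concerning (T1), (T2), $\mathsf c(H,\boldsymbol\beta)\le 2$, the catenary chain via Lemma~\ref{2.2}, and the Dickson's-lemma proof of part~2 are all in order — these match the standard arguments in the cited reference.

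Where your proposal does have a real gap is in part~3, and in two places. First, the inference from $\omega(G_P)<\infty$ to $\mathsf t(G_P)<\infty$ cannot rest on Proposition~\ref{2.1}.1: that statement is phrased with ``Let $H$ be a tame monoid'' as its hypothesis, so invoking $\mathsf t\le\omega^2$ from it to \emph{prove} tameness is circular. For $\mathcal B(G_P)$ you should instead bound the local tame degree directly: once you know every atom $U$ divides a short subproduct $V_1\cdot\ldots\cdot V_m$ with $m\le\mathsf D(G_P)$, the length $|V_1\cdot\ldots\cdot V_m|\le m\,\mathsf D(G_P)$ is bounded, and the number of atoms in any refactorization of that subproduct is controlled by half its length, giving an explicit finite bound on $\mathsf t(G_P,U)$ in terms of $\mathsf D(G_P)$. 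Second, your final paragraph about lifting tameness ``through $\boldsymbol\beta$'' — bounding $\mathsf t(H,u)$ by $\mathsf t(\mathcal B(G_P),\boldsymbol\beta(u))$ plus a fibre contribution — is not how the argument actually goes, and you correctly flag it as the obstacle: there is no simple additive relation of that kind. The clean route is to bound $\omega(H,u)$ and $\mathsf t(H,u)$ \emph{directly in $H$} via the divisor homomorphism $\varphi\colon H\to D=\mathcal F(P)$: if $u\mid a_1\cdot\ldots\cdot a_n$, then the at most $\mathsf D(G_P)$ primes appearing in $\varphi(u)$ are distributed among the $\varphi(a_i)$'s, so already some subproduct over an index set of size $\le\mathsf D(G_P)$ is divisible by $u$, and the same bookkeeping in $D$ (using that $D$ is factorial) controls the refactorization step. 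In other words, instead of transferring tameness from $\mathcal B(G_P)$ back to $H$, one proves tameness of $H$ by the same direct argument that gives tameness of $\mathcal B(G_P)$, using $\varphi$ in place of the inclusion $\mathcal B(G_P)\hookrightarrow\mathcal F(G_P)$; this is essentially what \cite[Theorem 3.4.10]{Ge-HK06a} does. Patching your part~3 along these lines closes the gap; the rest of the proposal is sound.
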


\medskip
\noindent
{\bf Monoids of modules.} Let $R$ be a ring and let $\mathcal C$ be a small class of right $R$-modules. That is, $\mathcal C$ has a set $\mathcal V (\mathcal C)$ of isomorphism class representatives, and for any $M \in \text{\rm Ob}(\mathcal C)$, we denote by $[M]$ the unique element of $\mathcal V (\mathcal C)$ isomorphic to $M$. In more technical terms we suppose that the full subcategory $\mathcal C$ of Mod-$R$ is skeletally small. Suppose that $\mathcal C$ is closed under finite direct sums, direct summands, and isomorphisms. Then $\mathcal V (\mathcal C)$ is a reduced commutative semigroup with operation $[M] + [N] = [M \oplus N]$,
 and all information about direct-sum decomposition of modules in $\mathcal C$ can be studied in terms of factorizations in the semigroup $\mathcal V (\mathcal C)$.

Suppose that $\mathcal V (\mathcal C)$ is a monoid and let $\mathcal C'$ be a subclass of $\mathcal C$ which is closed under isomorphisms. Then $\mathcal C'$ is closed under finite direct sums, direct summands, and isomorphisms if and only if $\mathcal V (\mathcal C') \subset \mathcal V (\mathcal C)$ is a divisor-closed submonoid. For a module $M$ in $\mathcal C$ we denote by $\add (M)$ the class of $R$-modules that are isomorphic to direct summands of direct sums of finitely many copies of $M$. Then $\mathcal V (\add (M))$ is the smallest divisor-closed submonoid generated by $[M] \in \mathcal V(\mathcal C)$. The class $\mathcal C_{\proj}$ of finitely generated projective right $R$-modules is of  special importance (\cite{Dr69a} and \cite[Section 2.2]{Fa12a}) and will be considered in Sections \ref{6} and \ref{7}.

\medskip
\begin{proposition} \label{3.2}
Let $R$ be a ring and $\mathcal C$ a small class of right  $R$-modules which is closed under finite direct sums, direct summands, and isomorphisms.
\begin{enumerate}
\item If $\End_R (M)$ is semilocal for each $M$ in $\mathcal C$, then $\mathcal V (\mathcal C)$ is a Krull monoid.
\smallskip
\item  If there exists $M$ in $\mathcal C$ such that $\End_R (M)$ is semilocal, then $\mathcal V (\add (M))$ is a finitely generated Krull monoid. Conversely, if $\mathcal V (\mathcal C)$ is a finitely generated monoid, then there is some $M$ in $\mathcal C$ such that $\mathcal V (\mathcal C) = \mathcal V (\add (M))$.

\smallskip
\item If $R$ is semilocal, then $\mathcal V (\mathcal C_{\proj})$ is a finitely generated Krull monoid. In addition, if $R$ is commutative, then $\mathcal V (\mathcal C_{\proj})$ is Krull if and only if $\mathcal V (\mathcal C_{\proj})$ is free abelian.

\end{enumerate}
\end{proposition}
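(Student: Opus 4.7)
The plan is to prove the three statements in order, invoking the theory of modules with semilocal endomorphism rings due to Camps, Dicks, Facchini, and others.

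Statement (1) is due to Facchini \cite{Fa02}. The plan is to fix representatives $(M_\lambda)_{\lambda \in \Lambda}$ of isomorphism classes of indecomposable modules in $\mathcal C$ and, for each $\lambda$, to enumerate the finitely many maximal right ideals of $\End_R(M_\lambda)$. To each $N \in \mathcal C$ one then assigns its \emph{profile}: the function on pairs $(\lambda, \mathfrak m)$ recording the corresponding multiplicities read off from any indecomposable decomposition of $N$. Additivity of the profile under direct sum, together with the Krull--Schmidt-type uniqueness theorem for modules with semilocal endomorphism rings, yields a divisor homomorphism from $\mathcal V(\mathcal C)$ into the free abelian monoid on these pairs, exhibiting $\mathcal V(\mathcal C)$ as Krull.

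Statement (2) follows by counting atoms. If $\End_R(M)$ is semilocal then $M$ decomposes into finitely many indecomposables $M_1, \ldots, M_r$, and every indecomposable module in $\add(M)$ is isomorphic to some $M_i$; hence $\mathcal V(\add(M))$ is generated by $[M_1], \ldots, [M_r]$ and is a finitely generated Krull monoid by (1). Conversely, if $\mathcal V(\mathcal C)$ is finitely generated then its set of atoms $\{[M_1], \ldots, [M_r]\}$ is finite, and setting $M := M_1 \oplus \cdots \oplus M_r$ gives $\mathcal V(\mathcal C) = \mathcal V(\add(M))$: the $M_i$ are direct summands of $M$ so $\mathcal V(\add(M))$ contains each $[M_i]$, while $\add(M) \subseteq \mathcal C$ since $\mathcal C$ is closed under direct sums and direct summands.

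For statement (3), semilocality of $R$ implies that $\End_R(R^n) = M_n(R)$ is semilocal for every $n$; thus every finitely generated projective $P$, written as $P \oplus Q \cong R^n$, has $\End_R(P) = e M_n(R) e$ a corner of a semilocal ring, hence semilocal. Since $\mathcal C_{\proj} = \add(R)$, statement (2) yields the first assertion. In the commutative case, the ``if'' direction is trivial. For the ``only if'' direction, assume $R$ is commutative and $\mathcal V(\mathcal C_{\proj})$ is Krull. For every $[P] \in \mathcal V(\mathcal C_{\proj})$ there exist $Q$ and $n$ with $[P] + [Q] = n[R]$, so $[P]$ divides $n[R]$ in $\mathcal V(\mathcal C_{\proj})$ and --- via any divisor homomorphism into a free abelian monoid --- the image of $[P]$ is supported on the primes appearing in the image of $[R]$. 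The main obstacle is to leverage complete integral closure together with the commutativity to conclude that the class group of $\mathcal V(\mathcal C_{\proj})$ is trivial, so that the reduced Krull monoid $\mathcal V(\mathcal C_{\proj})$ is free abelian; the argument controls the Picard group and the contribution of stably free projectives, along the lines of \cite{Dr69a} and \cite[Section 2.2]{Fa12a}.
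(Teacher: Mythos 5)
Your argument for the forward direction of statement (2) contains a real gap. You claim that after fixing one indecomposable decomposition $M = M_1 \oplus \cdots \oplus M_r$, ``every indecomposable module in $\add(M)$ is isomorphic to some $M_i$.'' This is precisely the Krull--Remak--Schmidt--Azumaya property for $\add(M)$, and it fails in general even when $\End_R(M)$ is semilocal --- indeed its failure is the entire motivation of the paper. For instance, by Remark~\ref{3.3} any reduced Krull monoid is realized as $\mathcal V(\mathcal C_{\proj}) = \mathcal V(\add(R_R))$ with all endomorphism rings semilocal; if such a monoid has more than one atom while $R_R$ is indecomposable, then $\add(R)$ contains indecomposables not isomorphic to $R$. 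What is true --- and nontrivial --- is that $\add(M)$ contains only \emph{finitely many} isomorphism classes of indecomposables when $\End_R(M)$ is semilocal; the paper obtains this from \cite[Corollary 4.11]{Fa98a}, and also first verifies that $\End_R(N)$ is semilocal for \emph{every} $N \in \add(M)$ (via the fact that $\End(N_1 \oplus N_2)$ is semilocal iff both $\End(N_i)$ are) so that statement~1 applies to $\add(M)$; your proposal skips this step as well. Your converse direction and your reduction of statement (3) to statement (2) via $\mathcal C_{\proj} = \add(R_R)$, $\End(R_R) \cong R$ agree with the paper.

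Separately, the ``only if'' direction of statement (3) (commutative $R$ Krull $\Rightarrow$ free abelian) is left as a programmatic sketch rather than a proof in your proposal; the paper disposes of it by citing \cite[Theorem 4.2]{Fa06a}. If you want to keep your outline, you would need to actually carry out the class-group computation you allude to, or cite that result.
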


\begin{proof}
See \cite[Theorem 3.4]{Fa02} for the proof of 1.

\smallskip
We now consider statement 2. Let $N_1$ and $N_2$ be right $R$-modules and recall that $\End (N_1 \oplus N_2)$ is semilocal if and only if $\End (N_1)$ and $\End (N_2)$ are semilocal (see the introduction of \cite{Fa-He04}). Thus $\End (N)$ is semilocal for all $N$ in $\add (M)$ and hence $\mathcal V (\add (M))$ is a Krull monoid by 1.
Moreover, \cite[Corollary 4.11]{Fa98a} implies that $\mathcal V (\add (M))$ is finitely generated. Conversely, suppose that $\mathcal V (\mathcal C)$ is a finitely generated monoid. Let $M_1, \ldots, M_t$ be indecomposable right $R$-modules such that $\mathcal A(\mathcal V (\mathcal C))=\{[M_1], \ldots, [M_t]\}$. Then $M = M_1 \oplus \cdots \oplus M_t$ has the required property.

\smallskip
Since $\mathcal V(\mathcal C_{\proj}) = \mathcal V(\add(R_R))$ and $\End(R_R) \cong R$, the first claim in 3 follows immediately from 2. The second statement follows from \cite[Theorem 4.2]{Fa06a}.
\end{proof}

\medskip
\begin{remarks} \label{3.3}
  \begin{remenumerate}[leftmargin=0,itemindent=2em,itemsep=0.2em]
\item[]
\item Statement 1 of Proposition \ref{3.2} is a local condition guaranteeing that $\mathcal V (\mathcal C)$ is a Krull monoid and such examples will be considered in Section \ref{5}. It is well-known that the condition ''$\End_R (M)$ is semilocal" is stronger than the condition that ''$\mathcal V (\add (M))$ is a Krull monoid". Indeed, if $H$ is any Krull monoid, then all divisor-closed submonoids are Krull as well. However, even if all divisor-closed submonoids which are generated by a single element are Krull monoids, then the monoid $H$ can fail to be Krull (see \cite{Re14a} for counterexamples).

\item In Section \ref{6} we will see that since a finitely primary monoid $H$ is Krull if and only if $H$ is factorial, the monoid of modules $\mathcal V (\mathcal C_{\proj})$ in this setting shares the same property.

\item In Section \ref{7} we will study finitely generated monoids of modules which are not Krull.

\item Every reduced Krull monoid is isomorphic to a monoid of finitely generated projective modules $\mathcal V (\mathcal C_{\proj})$ for which $\End_R (M)$ is semilocal for all $M \in \mathcal C_{\proj}$ (\cite[Theorem 2.1]{Fa-Wi04}).
  For further realization results see, for example \cite{Wi01}, \cite[Section 9]{Fa12a}, \cite{Le-Wi12a}, and \cite[Theorem 2.7.14]{Ge-HK06a}.

\item In \cite{Fa06b}, Facchini presents an extensive list of classes of modules having semilocal endomorphism rings, and thus a list of monoids of modules $\mathcal V(\mathcal C)$ which are Krull.
\end{remenumerate}
\end{remarks}

\bigskip
\section{Zero-Sum Theory in finitely generated free abelian groups} \label{4}
\bigskip

In this section we study the Davenport constant $\mathsf D(G_0)$ when $G_0$ is a subset of a finitely generated free abelian group $G$. Although the results we obtain are interesting in their own right, we have in mind an application to the study of invariants of certain Krull monoids, and in particular certain monoids of modules $\mathcal V(\mathcal C)$ that we study in Section \ref{5}.

Let $G$ be an additive abelian group and let $G_0 \subset G$ be a subset.  A sequence over $G_0$ will mean a finite sequence of terms from $G_0$ which is unordered and where repetition of terms is allowed.  Zero-Sum Theory studies such sequences, their sets of subsequence sums, and their structure under extremal conditions (see \cite{Ga-Ge06b, Ge-Ru09, Gr13a}). Much of the focus has been on sequences over finite abelian groups, but --- motivated by applications in various fields --- sequences over infinite abelian groups have recently found more attention (see \cite{Ge-Gr-Sc-Sc10, Si14a} for examples of recent work).

Our goal in this section is to present, and expand upon, known methods from matroid theory that can be used to find upper bounds for the Davenport constant $\mathsf D(G_0)$ for subsets $G_0$ of finitely generated free abelian groups. These methods from matroid theory were used by  Sturmfels with regards to toric varieties \cite{MR1363949}. Such toric varieties are, in turn, related to primitive partition identities which were further studied by Diaconis, Graham, and Sturmfels \cite{Di-Gr-St93} by combining the matroid-theoretic methods with methods from the geometry of numbers. Although no mention of either zero-sums or the Davenport constant was made in their work, Freeze and Schmid \cite[Theorem 6.5]{Fr-Sc10a} along with Geroldinger and Yuan \cite[Section 3]{Ge-Yu13a} made attempts to use these methods in order to study zero-sum problems over finite abelian groups. We continue this program with a goal of studying zero-sum problems over infinite abelian groups.

\medskip

In order to make these methods from matroid theory --- written using very different notation and terminology than is typical in zero-sum theory --- more generally available for the study of zero-sum problems over finitely generated free abelian groups, we  develop the basic theory here from scratch. In addition, we modify the arguments so that they extend to the case when $G_0$ is non-symmetric, a situation not included in the original formulations from \cite{Ro69a} and \cite{MR1363949}.

\smallskip
Let $G$ be a finitely generated free abelian group, $Q$ a $\Q$-vector space with $G \subset Q$, and $G_0$ a subset $G$. The elements $S \in \mathcal F_{\rat} (G_0)$ will be called {\it rational sequences} over $G_0$. If $S = \prod_{g \in G_0} g^{\mathsf v_g (S)} \in \mathcal F_{\rat} (G_0)$, then $\sigma (S) = \sum_{g \in G_0} \mathsf v_g (S) g  \in Q$ is called the {\it sum } of $S$. Then
\[
\mathcal B_{\rat} (G_0) = \{ S \in \mathcal F_{\rat} (G_0) : \sigma (S) = 0 \} \subset \mathcal F_{\rat} (G_0)
\]
is a saturated submonoid of $\mathcal F_{\rat}(G_0)$, and hence $\mathcal B_{\rat} (G_0)$ is a rational generalized Krull monoid. This monoid is clearly reduced and, since for any nonidentity  $B \in \mathcal B_{\rat} (G_0)$ we have $B = B^{1/2} B^{1/2}$, $\mathcal B_{\rat} (G_0)$ has no atoms. Moreover, $\mathcal B_{\rat} (G_0)$ is a generalized block monoid as introduced in \cite[Example 4.10]{Ge-HK94}. The elements of $\mathcal B_{\rat} (G_0)$ will be called {\it rational zero-sum sequences} over $G_0$. Obviously, for any rational (zero-sum) sequence $S$ there exists an integer $n \in \N$ such that $S^n$ is an ordinary (zero-sum) sequence. Thus $\mathcal B (G_0) \subset \mathcal B_{\rat} (G_0)$ and $\mathcal F (G_0) \subset \mathcal F_{\rat} (G_0)$ are root extensions (see \cite[Section 5]{Ch-HK-Kr02}).
If $G_0 \subset G_0'$ are two subsets of $G$, we assume $\mathcal F_{\rat}(G_0) \subset \mathcal F_{\rat}(G_0')$ and $\mathcal B_{\rat}(G_0) \subset \mathcal B_{\rat}(G_0')$. Specifically, we make this assumption when we consider $G_0' = G_0 \cup -G_0$.
Given a sequence $S \in \mathcal F_{\rat}(G_0)$ and $g\in G_0$, we tacitly use terms such as $\vp_{-g}(S)$ and $-S$, where we interpret $-g$ and $-S$ as elements of $\mathcal F_{\rat}(G_0 \cup -G_0)$. In particular, $\vp_{-g}(S) = 0$ if $-g \not \in G_0$.

\medskip
We begin with the central construction of a partion of $G_0$ and of an epimorphism from $\mathcal F_{\rat} (G_0)$ to some $\Q$-vector space. This construction will remain valid throughout Section \ref{4}.

\medskip

Partition the nonzero elements of $G_0$ as $G_0\setminus\{0\}=G_0^+\cup G_0^-$, where the elements of $G_0$ have been distributed so that if $g,-g \in G_0\backslash\{0\}$, then $g$ and $-g$ neither both occur in $G_0^+$ nor both occur in $G_0^-$.
Furthermore, choose such a partition so that $G_0^+$ is maximal, that is, such that $-G_0^-\subset G_0^+$. There may, of course, be many ways to achieve this, but we choose one such partition and fix it for the remainder of Section \ref{4}.

Let $$\varphi':\mathcal F_{\rat}(G_0)\rightarrow \mathcal F_{\rat}(G_0^+\setminus (-G_0^{-}))\times \mathsf q(\mathcal F_{\rat}(-G_0^{-}))\subset
\mathsf q(\mathcal F_{\rat}(G_0^+))$$
be the unique epimorphism satisfying $\varphi'(g)=g$ for each $g\in G_0^+$,  $\varphi'(-g)=g^{-1}$ for each $-g\in G_0^{-}$, and $\varphi'(0)=1$ provided $0 \in G_0$.  The arguments of this section are based in the geometry of the sets and thus can be phrased more naturally using vector notation. To translate, we use the canonical isomorphism between $\mathsf q(\mathcal F_{\rat}(G_0^+))$ and $\Q^{(G_0^+)}$ which maps $g$ to $\mathsf e_g$ for each $g \in G_0^+$ and where $(\mathsf e_g)_{g \in G_0^+}$ denotes the standard basis of $\Q^{(G_0^+)}$. Composing $\varphi'$ with this isomorphism, we obtain an epimorphism
$$\varphi:\mathcal F_{\rat}(G_0)\overset{\varphi'}{\rightarrow} \mathcal F_{\rat}(G_0^+\setminus (-G_0^{-}))\times \mathsf q(\mathcal F_{\rat}(-G_0^{-}))\rightarrow \Q_{\geq 0}^{(G_0^+\setminus (-G_0^{-}))}\oplus \Q^{(-G_0^{-})} \subset \Q^{(G_0^+)}$$ satisfying
$$\varphi\left(\prod_{g\in G_0}g^{\vp_g(S)}\right)=\Summ{g\in G_0^+} \big(\vp_g(S)-\vp_{-g}(S) \big)\mathsf e_g.$$ The sequences $S\in\ker(\varphi) \cap \mathcal F (G_0)$ are precisely those zero-sum sequences over $G_0$ having a factorization into zero-sum subsequences of length at most $2$, and an arbitrary rational sequence from $\ker(\varphi)$ is nothing more than a rational power of such a sequence. The purpose of this construction is to create a setting where we can first apply methods from linear algebra and the geometry of $\Q$-vector spaces to study $\varphi(S)$, and then to apply the results we obtain to the original sequence $S\in \mathcal F_{\rat} (G_0)$.

\medskip
For each rational sequence $S\in \mathcal F_{\rat}(G_0)$ we define the signed support of $S$ as $$\supp^+(S)=\{g\in G_0 \cup -G_0:\;\vp_g(S)-\vp_{-g}(S)\neq 0\}.$$  Observe that for each  $S\in \mathcal F(G_0)$, we have $$\supp^+(S)=\{g\in G_0 \cup -G_0:\varphi'(g)\in \supp(\varphi'(S))\}$$ and
$$\supp^+(S)=\supp^+(-S)\subset \supp(S)\cup -\supp(S)\subset G_0\cup -G_0.$$
For $S\in \Ff(G_0)$ we define \be\label{R-def}R=0^{\vp_0(S)}\prod_{g\in G^+_0}(g (-g))^{\min\{\vp_g(S),\,\vp_{-g}(S)\}}\in \Ff(G_0),\ee where $\vp_{-g}(S)=0$ whenever $-g\notin G_0$. Then $R\mid S$ and we set $S'=R^{-1} S\in \Ff(G_0)$. It is then easily noted that
$\supp^+(S)=\supp^+(S')$, that $\varphi(S)=\varphi(S')$, that  $\varphi(R)=\mathbf 0 \in \Q^{(G_0^+)}$, and for each $g\in G_0$, $g$ and $-g$ are not both contained in $\supp(S')$ for any $g\in G_0$; that is, either $\vp_g(S')=0$ or $\vp_{-g}(S')=0$.
Note that the latter condition is equivalent to $$\supp(S')\cap -\supp(S')=\emptyset.$$
In particular, $0\notin \supp(S')$. Finally, if $S\in \Fc(G_0)$ is an ordinary sequence, then  $R,\,S'\in \Fc(G_0)$ are also ordinary sequences. These observations will be used repeatedly in the sequel.

\medskip

We  say that a (rational) zero-sum sequence $S$ is {\it elementary} if $\supp^+(S)$ is nonempty and minimal; that is, there is no (rational) zero-sum sequence $T$ with $\emptyset\neq \supp^+(T)\subsetneq \supp^+(S)$. Clearly, a rational zero-sum sequence $S$ is elementary if and only if  $S^n$ is an elementary zero-sum sequence for some positive integer $n$. An \emph{elementary} atom is an atom $U\in\mathcal B(G_0)$ which is also an elementary zero-sum sequence. We let
\[
 \mathsf D^{\mathsf{elm}}(G_0) =  \sup \{ |U| : U \in \mathcal A (G_0) \ \text{is elementary} \} \in \mathbb N_0 \cup \{\infty\}
\]
denote the supremum of the lengths of elementary atoms over $G_0$ (with the convention that $\sup \emptyset = 0$), and call $\mathsf D^{\mathsf{elm}}(G_0)$ the {\it elementary Davenport constant} over $G_0$.

In our motivating applications the group $G$ is the class group of a Krull monoid, and thus we need each of the previously defined concepts in this general abstract setting. However, it is technically simpler but no restriction to work over the additive group $\Z^r$ instead of working over an abstract finitely generated free abelian group. (Note, if $\varphi\colon G \to \Z^r$ is a group isomorphism and $G_0 \subset G$, then $\mathsf D(G_0) = \mathsf D(\varphi(G_0))$.) Therefore, for the rest of this section we suppose that
\[
G_0 \subset G = \Z^r \quad \text{where} \quad r \ge 1 \,.
\]
Since the case $G_0 \subset \{0\}$ is trivial, we further assume that the set $G_0$ contains a nonzero element of $G$.
Moreover, whenever it is convenient, we may assume that $\rk( \langle G_0 \rangle)=r$, as otherwise we could replace $\Z^r$ with $\la G_0\ra\cong \Z^{\rk( \langle G_0 \rangle)}$. We start with a sequence of basic but important properties regarding elementary zero-sum sequences.

\smallskip

\begin{lemma}\label{lem-key-tech}
Let  $r\geq 1$ and let $G_0\subset \Z^r$ be a nonempty subset. Let $S,\,T\in \Ff(G_0)$ be rational sequences such that $\supp(S)\cap \supp(-S)=\emptyset$ and $\supp(T)\cap \supp(-T)=\emptyset$. Suppose $\supp^+(T)\subset \supp^+(S)$ and $\supp(T)\cap \supp(S)\neq \emptyset$. Let $$\alpha=\min\{\vp_g(S)/\vp_g(T):\;g\in \supp(S)\cap \supp(T)\}\in \Q_{>0}.$$ Define $S'=(-T)^{\alpha} S\in \Ff(G_0\cup -G_0)$, let $$R=\prod_{g\in G^+_0}(g (-g))^{\min\{\vp_g(S'),\,\vp_{-g}(S')\}}\in \Ff(G_0\cup -G_0),$$ and set
$\tilde{S'}:=S' R^{-1}$. Then $\tilde{S'}\in\Ff(G_0)$ is a rational sequence over $G_0$ with
\be\label{toosmall}\supp(\tilde{S'})\subsetneq\supp(S)\quad\und\quad \supp^+(\tilde{S'})\subsetneq \supp^+(S).\ee
\end{lemma}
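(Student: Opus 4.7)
The plan is to build on two fundamental identities that hold for every $h \in G_0 \cup -G_0$:
\begin{equation*}
\vp_h(S') = \vp_h(S) + \alpha\,\vp_{-h}(T) \und \vp_h(\tilde{S'}) = \max\{\vp_h(S') - \vp_{-h}(S'),\, 0\},
\end{equation*}
the latter being a direct consequence of the definition of $R$ (which also automatically forces $\supp(\tilde{S'}) \cap -\supp(\tilde{S'}) = \emptyset$). From there, everything follows by case analysis driven by exactly two inputs: the hypothesis $\supp^+(T) \subset \supp^+(S)$ and the defining minimality of $\alpha$.

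First I would verify that $\tilde{S'} \in \mathcal F_{\rat}(G_0)$, i.e., that no term lying outside $G_0$ survives the cancellation by $R$. For $h \notin G_0$ one has $\vp_h(S) = 0$, so $\vp_h(\tilde{S'}) > 0$ would force $-h \in \supp(T)$. The maximality condition $-G_0^- \subset G_0^+$ then places $-h \in G_0^+$, and $\supp^+(T) \subset \supp^+(S)$ combined with $h \notin G_0 \supseteq \supp(S)$ yields $-h \in \supp(S) \cap \supp(T)$. The defining minimality of $\alpha$ gives $\alpha\,\vp_{-h}(T) \le \vp_{-h}(S)$, which (using both ``support disjoint from its negative'' hypotheses) translates to $\vp_{-h}(S') \ge \vp_h(S')$, contradicting $\vp_h(\tilde{S'}) > 0$.

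For the support inclusion $\supp(\tilde{S'}) \subseteq \supp(S)$, essentially the same case analysis applies: if $h \in \supp(\tilde{S'})$ but $h \notin \supp(S)$, then $\vp_h(S') > \vp_{-h}(S')$ combined with $\supp^+(T) \subset \supp^+(S)$ again places $-h \in \supp(T) \cap \supp(S)$, and the minimality of $\alpha$ yields the reverse inequality, a contradiction. For the strict containment, pick $g_0 \in \supp(S) \cap \supp(T)$ realizing the minimum in the definition of $\alpha$. Using $\supp(S) \cap -\supp(S) = \emptyset = \supp(T) \cap -\supp(T)$, a short calculation gives $\vp_{g_0}(S') = \vp_{-g_0}(S') = \vp_{g_0}(S)$, so $R$ fully absorbs the $g_0$-contribution and $g_0 \in \supp(S) \setminus \supp(\tilde{S'})$.

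Finally, for the $\supp^+$-claim, the disjointness $\supp(X) \cap -\supp(X) = \emptyset$ (valid for $S$, $T$, and $\tilde{S'}$) gives $\supp^+(X) = \supp(X) \sqcup -\supp(X)$, so the inclusion $\supp(\tilde{S'}) \subset \supp(S)$ transports immediately to $\supp^+(\tilde{S'}) \subset \supp^+(S)$; the same $g_0$ witnesses strictness, since $\supp(S) \cap -\supp(S) = \emptyset$ forces $-g_0 \notin \supp(\tilde{S'})$ as well, hence $g_0 \notin \supp^+(\tilde{S'})$ while $g_0 \in \supp^+(S)$. The main obstacle I anticipate is bookkeeping: one must keep the partition $G_0 \setminus \{0\} = G_0^+ \sqcup G_0^-$, the maximality $-G_0^- \subset G_0^+$, and the asymmetric roles of $h$ versus $-h$ aligned throughout the chain $\vp_h(S) \to \vp_h(S') \to \vp_h(\tilde{S'})$, and to ensure the two independent hypotheses (the set-level $\supp^+(T) \subset \supp^+(S)$ and the pointwise minimality of $\alpha$) are combined correctly in each case.
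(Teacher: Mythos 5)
Your proposal is correct and follows essentially the same route as the paper's proof: the hypothesis $\supp^+(T) \subset \supp^+(S)$ is used to rule out terms of $\tilde{S'}$ lying outside $\supp(S)$, and the minimality in the definition of $\alpha$ is used to show the minimizing element $g_0$ (the paper's $g_1$) drops out of $\tilde{S'}$, witnessing strictness of both inclusions. Your reformulation via the two valuation identities $\vp_h(S') = \vp_h(S) + \alpha\,\vp_{-h}(T)$ and $\vp_h(\tilde{S'}) = \max\{\vp_h(S') - \vp_{-h}(S'),\,0\}$ simply packages the paper's in-line computations more systematically; the content is the same.
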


\begin{proof}
By definition, $\tilde{S'}\in \mathcal B_{\rat}(G_0\cup -G_0)$ with $\supp^+(\tilde{S'})=\supp^+(S')$.
Thus, by the definition of $S'$, \be\label{kkb}\supp^+(\tilde{S'})=\supp^+(S')\subset \supp^+(-T)\cup \supp^+(S)=\supp^+(T)\cup \supp^+(S)=\supp^+(S),\ee with the final equality obtained by using the hypothesis that $\supp^+(T)\subset \supp^+(S).$ By hypothesis, $\supp^+(T)\subset \supp^+(S)$, and thus
\be\label{retya}-g\in \supp(S)\subset G_0\quad \mbox{ for every $g\in \supp(T)\setminus \supp(S)$}.\ee

Let $g_1\in \supp(S)\cap \supp(T)$ with $\vp_g(S)/\vp_g(T)$ minimal in $\Q_{>0}$ where $g\in \supp(S)\cap \supp(T)$.
By hypothesis, $g_1$ and $-g_1$ cannot both be in $\supp(S)$ nor can they both be in $\supp(T)$. Thus, since $g_1\in \supp(T)\cap \supp(S)$, we see that $-g_1\notin \supp(T)$ and $-g_1\notin \supp(S)$. Consequently, by definition of $\alpha$, we have $$\vp_{-g_1}(-T^\alpha)=\vp_{g_1}(T^\alpha)=\vp_{g_1}(S)\quad\und\quad \vp_{g_1}(-T^\alpha)=\alpha\vp_{-g_1}(T)=0=\vp_{-g_1}(S).$$  But $g_1\in \supp^+(S)$ as $g_1\in \supp(S)$ and $-g_1\notin \supp(S)$. Thus $g_1\notin \supp^+(S')$ and the inclusion in \eqref{kkb} must be strict; that is, \be\label{kkc}\supp^+(\tilde{S'})\subsetneq \supp^+(S).\ee

From the definitions of $S'$ and $\tilde{S'}$, we have \be\label{qwela} \supp(\tilde{S'})\subset \supp(S')\subset \supp(-T)\cup \supp(S).\ee
For $g\in \supp(T)\cap \supp(S)$, the definition of $\alpha$ ensures that $\vp_{-g}(-T^\alpha)\leq \vp_g(S)$. Since $g$ and $-g$ cannot both be in $\supp(T)$ nor both be in $\supp(S)$, we also have that $\vp_g(-T^\alpha)=0=\vp_{-g}(S)$. Combining this fact with the definition of $\tilde{S'}$ we see that for every $g\in \supp(T)\cap \supp(S)$, $-g\notin \supp(\tilde{S'})$. Considering this fact along with \eqref{retya} and \eqref{qwela}, it follows that \be\label{wonderstorea}\supp(\tilde{S'})\subset \supp(S)\subset G_0.\ee
In particular, we now know that $\tilde{S'}\in \Ff(G_0)$. It remains to show that the inclusion in \eqref{wonderstorea} is strict. But for this fact we need only recall that $g_1\in \supp(S)$ and that $g_1\notin \supp^+(\wtilde{S'})=\supp(\wtilde{S'})\cup -\supp(\wtilde{S'}).$ Indeed, this last equality follows immediately since $\supp(\wtilde{S'})$ contains at most one of $g$ and $-g$ for every $g\in G_0$, a fact that follows from the definition of $\supp(\wtilde{S'})$ and the observations before Lemma \ref{lem-key-tech},
\end{proof}

\smallskip
\begin{lemma}\label{prop-rational-multiple}
Let $r\geq 1$ and let $G_0\subset \Z^r$ be a nonempty subset. If $S,\,T\in \mathcal B_{\rat}(G_0)$ are both elementary rational zero-sum sequences with $\supp^+(S)=\supp^+(T)$, then either $$S R_T^\alpha= T^\alpha R_S \quad\mbox{ or }\quad  S (-R_T)^\alpha=-T^\alpha R_S\quad\mbox{ for some positive $\alpha\in \Q_{>0}$},$$ where $R_S\mid S$ and $R_T\mid T$ are the respective maximal length  rational zero-sum subsequences of $S$ or $T$ with $\varphi(R_T)=\mathbf 0$ and $\varphi(R_S)=\mathbf 0$ (defined in \eqref{R-def}).

\smallskip
In particular, if $S,\,T\in \mathcal B(G_0)$ are elementary zero-sum sequences with common signed support,
then there exist relatively prime $m,\,n\in \mathbb N$ such that either $$S^n R_T^m= T^m R_S^n\quad\mbox{ or }\quad  S^n (-R_T)^m= (-T)^m R_S^n,$$ where $R_S\mid S$ and $R_T\mid T$ are the respective maximal length zero-sum subsequences of $S$ or $T$ having a factorization into zero-sum subsequences each of length at most $2$.
\end{lemma}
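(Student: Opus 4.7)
The plan is to first reduce to the \emph{primitive versions} $S' = R_S^{-1} S$ and $T' = R_T^{-1} T$. By the observations preceding Lemma~\ref{lem-key-tech}, these are elementary rational zero-sum sequences over $G_0$ with $\supp(S') \cap -\supp(S') = \emptyset$, $\supp(T') \cap -\supp(T') = \emptyset$, containing no $0$-terms, and satisfying $\supp^+(S') = \supp^+(S) = \supp^+(T) = \supp^+(T')$. It suffices to establish that either $S' = T'^{\alpha}$ or $S' = (-T')^{\alpha}$ for some $\alpha \in \Q_{>0}$, because the identities $T^{\alpha} = R_T^{\alpha} T'^{\alpha}$ and $-T^{\alpha} = (-R_T)^{\alpha}(-T')^{\alpha}$ allow either equation, left-multiplied by $R_S$, to be converted into the target identity in the appropriate monoid.

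To produce the proportionality I split into two cases. When $\supp(S') \cap \supp(T') \ne \emptyset$, Lemma~\ref{lem-key-tech} applies directly to $(S', T')$ and produces $\tilde{S'} \in \mathcal B_{\rat}(G_0)$ with $\supp^+(\tilde{S'}) \subsetneq \supp^+(S')$. Elementarity of $S$ forces $\supp^+(\tilde{S'}) = \emptyset$; since the construction of $\tilde{S'}$ removes every $g(-g)$ pair and since neither $S'$ nor $T'$ contributes a $0$-term, $\tilde{S'}$ must be the identity of $\mathcal F_{\rat}(G_0)$. Unwinding gives $(-T')^{\alpha} S' = R$, a pure product of $g(-g)$ pairs, where $\alpha \in \Q_{>0}$ is the minimum-ratio constant from Lemma~\ref{lem-key-tech}. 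Matching $g$- and $(-g)$-multiplicities coordinate-by-coordinate in $G_0^+$ yields $\vp_g(S') - \vp_{-g}(S') = \alpha(\vp_g(T') - \vp_{-g}(T'))$ for every $g \in G_0^+$, and because $\alpha > 0$ while neither $S'$ nor $T'$ has any $g(-g)$ pair, the signs align to force $S' = T'^{\alpha}$, hence $S R_T^{\alpha} = T^{\alpha} R_S$.

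When $\supp(S') \cap \supp(T') = \emptyset$, the equality $\supp^+(S') = \supp^+(T')$ combined with $\supp^+(S') = \supp(S') \cup -\supp(S')$ (valid because $S'$ has no $g(-g)$ pair) forces $\supp(T') = -\supp(S')$; in particular $\supp(S') \subset G_0 \cap -G_0$, so $-T' \in \mathcal B_{\rat}(G_0)$ is elementary with $\supp^+(-T') = \supp^+(S')$. Now $\supp(-T') \cap \supp(S') = \supp(S') \ne \emptyset$, and the identical argument applied to $(S', -T')$ yields $T'^{\alpha} S' = R$, hence $S' = (-T')^{\alpha}$ and therefore $S(-R_T)^{\alpha} = -T^{\alpha} R_S$. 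The ``in particular'' assertion follows by writing the positive rational $\alpha$ in lowest terms as $\alpha = m/n$ with $\gcd(m,n) = 1$ and raising the identity to the $n$-th power. The main obstacle I anticipate is the careful bookkeeping required to guarantee that $\tilde{S'}$ really collapses all the way to the identity (rather than merely to a nontrivial zero-sum sequence with empty signed support) and that this collapse translates precisely into the claimed multiplicative identity; both rely critically on the absence of $0$-terms and of $g(-g)$ pairs in the primitive reductions.
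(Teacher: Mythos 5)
Your proof is correct and follows essentially the same route as the paper: reduce to the case $R_S, R_T$ trivial (you phrase this as passing to $S', T'$), split on whether $\supp(S')\cap\supp(T')$ is nonempty, invoke Lemma~\ref{lem-key-tech}, use elementarity of $S$ to force $\supp^+(\tilde{S'})=\emptyset$ (hence $\tilde{S'}$ trivial since there are no $g(-g)$ pairs and no $0$-terms), and match multiplicities to get $S'=T'^\alpha$ or $S'=(-T')^\alpha$. The bookkeeping concern you flag at the end — that $\supp^+(\tilde{S'})=\emptyset$ collapses $\tilde{S'}$ all the way to the identity — is precisely the point the paper also relies on, and you have correctly identified the two facts ($\supp(\tilde{S'})\cap\supp(-\tilde{S'})=\emptyset$ and $0\notin\supp(\tilde{S'})$) that make it go through.
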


\begin{proof}
Let $S,\,T\in \mathcal B_{\rat}(G_0)$ be elementary rational zero-sum sequences with common signed support $\supp^+(S)=\supp^+(T)$. In view of the observations before Lemma \ref{lem-key-tech} we may, without loss of generality, assume that $R_S$ and $R_T$ are trivial (the general case follows easily from this special case). In particular, \be\label{whispy}\mbox{$\supp(S)\cap \supp(-S)=\emptyset\,\text{ and } \supp(T)\cap \supp(-T)=\emptyset$}.\ee
Moreover, since $S$ and $T$ are elementary zero-sum sequences, $\supp^+(S)=\supp^+(T)\not=\emptyset$.

First suppose that $\supp(S)\cap \supp(T)\neq \emptyset$. Then, since $\supp^+(S)=\supp^+(T)$, we can apply Lemma \ref{lem-key-tech}. Let $\alpha\in \Q_{>0}$, $S'\in \Ff(G_0\cup -G_0)$, and $\tilde{S'}\in \Ff(G_0)$ be as in Lemma \ref{lem-key-tech}. Since $S$, $T$, and $R$ are each rational zero-sum sequences, it follows from the definition of $\tilde{S'}$ that $\tilde{S'}$ is also a rational zero-sum sequence. Thus $\tilde{S'}$ will, in view of \eqref{toosmall}, contradict that $S$ is an elementary rational zero-sum sequence unless $\supp^+(\tilde{S'})=\emptyset$. However, by definition,  $\supp^+(\tilde{S'})=\emptyset$ is only possible if $\tilde{S'}$ is trivial, in which case $(-T^\alpha) S=S'=R$. But this implies, in view of \eqref{whispy} and $\supp^+(T)=\supp^+(S)$, that $$\vp_g(T^\alpha)=\vp_g(S)\quad\mbox{ for all $g\in G_0$}.$$
Therefore $S=T^\alpha$ as desired.

We now assume that $\supp(S)\cap \supp(T) = \emptyset$. In this case, since $\supp^+(S)=\supp^+(T)$, it follows from \eqref{whispy} that $\supp(T)=-\supp(S)$. This in turn implies that $\supp(-T) \subset G_0$ and hence $-T\in \mathcal B_{\rat}(G_0)$. Repeating the arguments of the previous paragraph using $-T$ in place of $T$, we conclude that $S=(-T)^\alpha$ as desired. The in particular statement follows easily from the general statement with $\alpha=\frac{m}{n}$, where $m,\,n\in \N$ are relatively prime.
\end{proof}

\smallskip
\begin{lemma}\label{lem-atom-uniqueness}
Let  $r\geq 1$ and let $G_0\subset \Z^r$ be a nonempty subset. If $U,\,V\in \mathcal A(G_0)$ are elementary atoms with $\supp^+(U)=\supp^+(V)$, then either $U=V$ or $U=-V$.
\end{lemma}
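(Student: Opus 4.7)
The plan is to invoke Lemma \ref{prop-rational-multiple} and then use the extra information that $U$ and $V$ are actual atoms (not just elementary zero-sum sequences) to force the exponents involved to equal $1$.

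First I observe that for any elementary atom $W \in \mathcal A(G_0)$ the associated sequence $R_W$ defined in \eqref{R-def} is trivial. Indeed, $R_W = 0^{\vp_0(W)} \prod_{g \in G_0^+} (g(-g))^{\min\{\vp_g(W),\vp_{-g}(W)\}}$, and each factor $0$ and each factor $g(-g)$ would be a proper zero-sum subsequence of $W$, contradicting irreducibility (and, in the case $W=0^1$, contradicting that $\supp^+(W)\ne\emptyset$). Applying the ``in particular'' part of Lemma \ref{prop-rational-multiple} to $U$ and $V$ and using that $R_U$ and $R_V$ are trivial, there exist coprime $m,n \in \N$ such that either
\[
U^n = V^m \qquad\text{or}\qquad U^n = (-V)^m,
\]
viewed as equalities in the free abelian monoid $\mathcal F(G_0 \cup -G_0)$.

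Next I compare multiplicities. In the first equation, $n\vp_g(U) = m\vp_g(V)$ for every $g$; in particular $\supp(U)=\supp(V)$. Since $\gcd(m,n)=1$, this forces $m \mid \vp_g(U)$ for every $g \in \supp(U)$. Setting $W = \prod_{g \in G_0} g^{\vp_g(U)/m}$ gives a sequence $W \in \mathcal F(G_0)$ with $W^m = U$, and since $\la G_0\ra \subset \Z^r$ is torsion-free, $m\sigma(W)=\sigma(U)=0$ yields $\sigma(W)=0$. If $m \geq 2$, then $W$ is a proper zero-sum subsequence of $U$, contradicting that $U$ is an atom. Hence $m = 1$, and symmetrically $n = 1$, so $U = V$. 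In the second case, the analogous argument applied to $n\vp_g(U) = m\vp_{-g}(V)$ yields $m=n=1$, hence $U = -V$ (which in particular forces $\supp(V) \subset -G_0$, so $-V$ really lies in $\mathcal F(G_0)$).

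The only potential obstacle is verifying that $R_U$ and $R_V$ vanish for atoms, which is what allows us to cleanly exploit Lemma \ref{prop-rational-multiple}; once that is in hand, the remainder is an elementary divisibility argument using the torsion-freeness of $\Z^r$ and the indecomposability of atoms.
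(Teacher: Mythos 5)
Your proof is correct and follows essentially the same route as the paper: observe that $R_U$ and $R_V$ are trivial because $U,V$ are atoms, invoke Lemma \ref{prop-rational-multiple} to obtain $U^n = V^m$ or $U^n = (-V)^m$ with $\gcd(m,n)=1$, and then use coprimality together with torsion-freeness and atomicity to force $m=n=1$. The only cosmetic difference is that the paper reduces the second case to the first by replacing $V$ with $-V$ up front, whereas you run the divisibility argument directly in both cases.
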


\begin{proof}
Since $U$ and $V$ are both elementary, their signed support must be nontrivial. Moreover, since $U$ and $V$ are atoms, neither $U$ nor $V$ has a nontrivial zero-sum subsequence of length one or two. Therefore,
 applying Lemma \ref{prop-rational-multiple} to both $U$ and $V$, we find that either $U^n=V^m$ or $U^n=(-V)^m$ for relatively prime positive integers $m$ and $n$. Note that if the latter case holds, then $\supp(-V)=\supp(U)\subset G_0$. Thus, by replacing $V$ with $-V$ if need be (in which case the hypotheses of the theorem hold for $U$ and $-V$ and, if the conclusion holds for $U$ and $-V$, then it will hold for the original pair $U$ and $V$ as well), we may assume that $U^n=V^m$ for relatively prime positive integers $m$ and $n$. Therefore $n\vp_g(U)=m\vp_g(V)$ for all $g\in G_0$. Consequently, since $\gcd(m,n)=1$, it follows that $m\mid \vp_g(U)$ for each $g\in G_0$. But then $U={W}^m$  is a product decomposition of $U$, where $W=\prod_{g\in G_0}g^{\vp_g(U)/m}\in \Fc(G_0)$. Noting that $0=\sigma(U)=m\sigma(W)$, we conclude that $W$ is a zero-sum sequence. Since it was assumed that $U\in \mathcal A(G_0)$ is an atom, $m=1$. A similar argument shows that $n=1$. Now the relation $U^m=V^m$ gives the desired conclusion $U=V$.
 \end{proof}

\smallskip
\begin{lemma}\label{lem-step}
Let $r\geq 1$ and let $G_0\subset \Z^r$ be a nonempty subset. If $S\in \mathcal B(G_0)$ is an elementary zero-sum sequence, then
$$S=R U^\ell$$ for some $\ell\geq 1$, some elementary atom $U\in \mathcal A(G_0)$ with $\supp^+(U)=\supp^+(S)$, and some zero-sum sequence $R\in \ker (\varphi)$ that has a factorization involving only zero-sum subsequences all of length at most $2$.
\end{lemma}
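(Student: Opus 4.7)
The plan is to use the decomposition $S = R \cdot S'$ introduced just before Lemma \ref{lem-key-tech} (see \eqref{R-def}) and then analyze the atoms in a factorization of $S'$. Recall that this decomposition is constructed so that $\varphi(R) = \mathbf 0$, so that $R$ is a product of the element $0$ together with pairs $g(-g)$ (each a zero-sum subsequence of length at most $2$), and so that $\supp(S') \cap \supp(-S') = \emptyset$ with $0 \notin \supp(S')$. Since $\sigma(S) = 0$ and $\sigma(R) = 0$, both $R$ and $S'$ are zero-sum sequences, and $R \in \ker(\varphi)$. Moreover, $\supp^+(R) = \emptyset$, so $\supp^+(S') = \supp^+(S)$.

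Next I would factor $S'$ into atoms $S' = U_1 \cdots U_k$ with $U_i \in \mathcal A(G_0)$. Since $S$ is elementary, $\supp^+(S)$ is nonempty, so $S'$ is nontrivial and $k \geq 1$. Because $\supp(U_i) \subset \supp(S')$ and $\supp(S') \cap \supp(-S') = \emptyset$, also $\supp(U_i) \cap \supp(-U_i) = \emptyset$ holds, whence $\supp^+(U_i) = \supp(U_i) \cup (-\supp(U_i))$. Since $U_i$ is a nontrivial atom and $0 \notin \supp(S')$ gives $0 \notin \supp(U_i)$, the support $\supp(U_i)$ is nonempty, so $\supp^+(U_i)$ is a nonempty subset of $\supp^+(S') = \supp^+(S)$. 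The elementary property of $S$ therefore forces $\supp^+(U_i) = \supp^+(S)$ for every $i \in [1,k]$; in particular, each $U_i$ is an elementary atom.

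I would then invoke Lemma \ref{lem-atom-uniqueness}: for any $i,\,j \in [1,k]$, since $U_i$ and $U_j$ are elementary atoms with common signed support, either $U_i = U_j$ or $U_i = -U_j$. The second alternative is impossible, because it would give $\supp(U_j) = -\supp(U_i) \subset -\supp(S')$, so that $\supp(U_j) \subset \supp(S') \cap (-\supp(S')) = \emptyset$, contradicting that the atom $U_j$ is nontrivial. Hence all $U_i$ coincide with a single elementary atom $U$, yielding $S' = U^\ell$ with $\ell := k \geq 1$, and therefore $S = R \, U^\ell$, as required.

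The main obstacle is the sign ambiguity in Lemma \ref{lem-atom-uniqueness}, which could a priori allow different atoms $U_i$ and $U_j$ in the factorization to differ by a sign; this is handled cleanly by exploiting the fact that $S'$ was explicitly constructed so that $\supp(S')$ and $-\supp(S')$ are disjoint, which rules out the $U_i = -U_j$ case.
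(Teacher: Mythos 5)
Your argument is correct. The main structural difference from the paper's proof is this: the paper proceeds by contradiction on a minimal-length counterexample $S$, observes that minimality forces all nontrivial zero-sum subsequences of $S$ to have length at least $3$ (this is where the reduction $S = R\,S'$ is absorbed implicitly), then writes $S = T_1\,T_2$, applies the inductive hypothesis to each $T_i$ to get $T_1 = U^m$ and $T_2 = V^n$, and finally invokes Lemma \ref{lem-atom-uniqueness} together with the absence of length-$2$ zero-sum subsequences to rule out $U = -V$. You instead perform the reduction $S = R\,S'$ explicitly, factor $S'$ into atoms all at once, and check that elementarity of $S$ forces each atomic factor to have full signed support, after which Lemma \ref{lem-atom-uniqueness} and the disjointness $\supp(S') \cap (-\supp(S')) = \emptyset$ collapse all the atoms to a single $U$. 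Both proofs rest on the same two ingredients --- the $R\,S'$ reduction and Lemma \ref{lem-atom-uniqueness} --- but your version replaces the induction with a direct appeal to atomicity of $\mathcal B(G_0)$, which makes the argument a bit shorter and avoids the minimal-counterexample bookkeeping. One small point worth making explicit (you effectively have it, but it is the crux of the elementarity step): each $U_i$ is itself elementary because any $T$ with $\emptyset \neq \supp^+(T) \subsetneq \supp^+(U_i) = \supp^+(S)$ would already contradict elementarity of $S$.
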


\begin{proof}
Assume for the sake of contradiction that $S\in \mathcal B(G_0)$ is an elementary zero-sum sequence that fails to have the desired form and with $|S|$ minimal. By the minimality of $|S|$ and the observations made before Lemma \ref{lem-key-tech}, it follows that every nontrivial zero-sum subsequence of $S$ has length at least $3$. Therefore $\supp(S)\cap\supp(-S)=\emptyset$. Since the conclusion of the lemma holds for each atom, it must be the case that the chosen $S$ is not an atom. Therefore $$S=T_1 T_2$$ for nontrivial zero-sum subsequences $T_1$ and $T_2$. Now, since every zero-sum subsequence of $S$ has length at least $3$, the nontrivial zero-sum subsequences $T_1$ and $T_2$ must each have non-empty signed support. But this contradicts the fact that $S$ is an elementary zero-sum sequence unless we have \be\label{crikd}\supp^+(S)=\supp^+(T_1)=\supp^+(T_2).\ee
Also, $|T_1|<|S|$ and $|T_2|<|S|$ since $T_1$ and $T_2$ are both nontrivial subsequences of $S$. By \eqref{crikd}, each $T_i$ is a zero-sum sequence with $\supp^+(T_i)=\supp^+(S)$ and is consequently an elementary zero-sum sequence. Furthermore, since $|T_1|<|S|$ and $|T_2|<|S|$, the minimality of $|S|$ ensures that both $T_1$ and $T_2$ have the form stated in the conclusion of the lemma. Thus $T_1=U^m$ and $T_2=V^n$ for atoms $U,\,V\in \mathcal A(G_0)$ with $$\supp^+(U)=\supp^+(V)=\supp^+(T_1)=\supp^+(T_2)=\supp^+(S)$$ and positive integers $m$ and $n$. As $U$ and $V$ are both atoms with $\supp^+(U)=\supp^+(V)=\supp^+(S)$, it follows that $U$ and $V$ are both elementary atoms. Invoking Lemma \ref{lem-atom-uniqueness} we find that either $U=V$ or $U=-V$.

If $U=-V$, then there must exist $g\in \supp(U)$ with $-g\in \supp(V)$. In this case, since $T_1=U^m$, $T_2=V^n$, and $S=T_1 T_2$, it follows that $g (-g)\mid S$, contrary to conclusion above that $S$ does not have any zero-sum subsequence of length $1$ or $2$. Therefore $U\neq -V$, forcing $U=V$. Then $T_1=U^m$, \ $T_2=V^n=U^n$, and $S=T_1 T_2=U^{m+n}$ as desired.
\end{proof}

Combining the above results, we are now able to describe the form of elementary zero-sum sequences. We do so in the following proposition whose proof follows immediately from Lemmas \ref{lem-atom-uniqueness}  and \ref{lem-step}. Essentially, Proposition \ref{prop-char} states that if $X$ is the signed support of an elementary zero-sum sequence, then (up to sign) there is a unique atom $U$ with $\supp^+(U)=X$, and all other zero-sum sequences having signed support $X$ must have the form $U^\ell R$ or $(-U)^\ell R$ where $\ell\geq 1$ and $R\in \mathcal B(G_0)$ is a zero-sum sequence that has a factorization involving zero-sum subsequences all of length at most $2$.

\smallskip
\begin{proposition}\label{prop-char}
Let  $r\geq 1$ and let $G_0\subset \Z^r$ be a nonempty subset. If $X$ is the signed support of an elementary zero-sum sequence over $G_0$, then there exists a unique (up to sign)  atom $U\in \mathcal A(G_0)$ with $\supp^+(U)=\supp^+(-U)=X$ such that every elementary zero-sum sequence  $S$ with signed support $\supp^+(S)=X$ has the form $$S=R U^\ell\quad\mbox{ or }\quad S=R (-U)^\ell$$ where $R\in \ker(\varphi)$ is a zero-sum sequence and $\ell\geq 1$.
\end{proposition}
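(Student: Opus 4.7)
The plan is to deduce Proposition \ref{prop-char} essentially as a packaging of the two preceding lemmas: Lemma \ref{lem-step} (existence of an elementary atom with the prescribed signed support, plus the structure of any elementary zero-sum sequence modulo $\ker(\varphi)$) and Lemma \ref{lem-atom-uniqueness} (uniqueness of such an atom up to sign). The argument is short; the real content already lives in those lemmas.

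First, pick any elementary zero-sum sequence $S_0$ with $\supp^+(S_0) = X$, which exists by the hypothesis on $X$. Apply Lemma \ref{lem-step} to $S_0$ to produce an elementary atom $U \in \mathcal A(G_0)$ satisfying $\supp^+(U) = \supp^+(S_0) = X$, together with the decomposition $S_0 = R_0 U^{\ell_0}$ for some $\ell_0 \ge 1$ and some $R_0 \in \ker(\varphi)\cap \mathcal B(G_0)$. Next, observe that since $\supp^+(S) = \supp^+(-S)$ for every rational sequence $S$, one automatically has $\supp^+(-U) = \supp^+(U) = X$, so $-U$ is an elementary atom with the same signed support as $U$ (note, as in the proof of Lemma \ref{prop-rational-multiple}, that $\supp(-U)\subset G_0$ because $\supp(U)\cap -\supp(U)=\emptyset$ and $\supp^+(U) \subset G_0 \cup -G_0$). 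This explains the ``$\pm U$'' ambiguity in the statement.

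For uniqueness, suppose $U'$ is any elementary atom of $\mathcal B(G_0)$ with $\supp^+(U') = X$. Then Lemma \ref{lem-atom-uniqueness} applied to the pair $U, U'$ yields $U' = U$ or $U' = -U$, giving uniqueness of $U$ up to sign.

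Finally, for the structural statement, let $S$ be any elementary zero-sum sequence over $G_0$ with $\supp^+(S) = X$. Applying Lemma \ref{lem-step} to $S$ produces an elementary atom $V$ with $\supp^+(V) = X$ and a decomposition $S = R V^\ell$ with $R \in \ker(\varphi)\cap \mathcal B(G_0)$ and $\ell \ge 1$. By the uniqueness already established, $V = U$ or $V = -U$, which yields $S = R U^\ell$ or $S = R(-U)^\ell$, as required. There is no genuine obstacle here: the work has been done in the previous two lemmas, and this proof is simply their formal combination, so the main thing to be careful about is to check that the atom $-U$ indeed lies in $\mathcal A(G_0)$ (equivalently, that $\supp(-U) \subset G_0$), which follows from the standing observations preceding Lemma \ref{lem-key-tech}.
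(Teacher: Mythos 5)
Your argument follows exactly the route the paper intends (the paper simply states that the proposition ``follows immediately from Lemmas~\ref{lem-atom-uniqueness} and~\ref{lem-step}''): apply Lemma~\ref{lem-step} to obtain the existence of an elementary atom $U$ with $\supp^+(U)=X$ and the decomposition $S=RV^\ell$, then apply Lemma~\ref{lem-atom-uniqueness} to pin $V$ down as $U$ or $-U$ and to establish uniqueness of $U$ up to sign. The overall structure of your proof is correct.

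However, the parenthetical justification that ``$\supp(-U)\subset G_0$ because $\supp(U)\cap -\supp(U)=\emptyset$ and $\supp^+(U)\subset G_0\cup -G_0$'' is false: those two facts are automatic for any elementary atom and say nothing about whether $-\supp(U)\subset G_0$. For a non-symmetric $G_0$, say $G_0=\{e_1,e_2,-e_1-e_2\}\subset\Z^2$ and $U=e_1 e_2(-e_1-e_2)$, one has $\supp(-U)\not\subset G_0$, so $-U\notin\mathcal F(G_0)$. (In the proof of Lemma~\ref{prop-rational-multiple} the containment $\supp(-T)\subset G_0$ is derived from the hypothesis $\supp(S)\cap\supp(T)=\emptyset$ for \emph{two distinct} sequences, which is a genuinely different situation.) Fortunately this misstatement is inessential to your proof: you never need $-U$ to lie in $\mathcal A(G_0)$ a priori. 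The alternative $S=R(-U)^\ell$ can only arise when the atom $V$ produced by Lemma~\ref{lem-step} — which always lies in $\mathcal A(G_0)$ — equals $-U$ by Lemma~\ref{lem-atom-uniqueness}, and in that case $-U\in\mathcal A(G_0)$ for free. If $-U\notin\mathcal F(G_0)$, the second alternative simply never occurs, and the proposition remains true with only the form $S=RU^\ell$ appearing. You should drop or correct that parenthetical claim.
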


\smallskip
\begin{lemma}\label{lem-existance-subsupport}
Let  $r\geq 1$ and let $G_0\subset \Z^r$ be a nonempty subset. Suppose $S\in \mathcal B_{\rat}(G_0)$ is a rational zero-sum sequence with $\supp^+(S)$ nonempty. Then there exists some elementary atom $U\in\mathcal A(G_0)$ such that $\supp(U)\subset \supp(S)$.
\end{lemma}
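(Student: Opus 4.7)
The plan is to pick a rational zero-sum sequence with minimal signed support subject to having ordinary support inside $\supp(S)$, show it is elementary, and then invoke Proposition \ref{prop-char} to extract an elementary atom.

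First I would consider the set $\mathcal T = \{T \in \mathcal B_{\rat}(G_0) : \supp(T) \subset \supp(S) \text{ and } \supp^+(T) \neq \emptyset\}$. By hypothesis $S \in \mathcal T$, and for each $T \in \mathcal T$ the signed support $\supp^+(T)$ is contained in the fixed finite set $\supp(S) \cup -\supp(S)$, so I can pick $T_0 \in \mathcal T$ with $\supp^+(T_0)$ minimal under inclusion. Using the reduction described before Lemma \ref{lem-key-tech} (cancelling pairs $g(-g)$ and factors of $0$), I may further arrange $\supp(T_0) \cap -\supp(T_0) = \emptyset$ without enlarging the support or changing $\supp^+(T_0)$.

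The main step is showing $T_0$ is elementary. I would argue by contradiction: suppose some $T' \in \mathcal B_{\rat}(G_0)$ has $\emptyset \neq \supp^+(T') \subsetneq \supp^+(T_0)$, and after the analogous reduction also satisfies $\supp(T') \cap -\supp(T') = \emptyset$. Every $g \in \supp(T')$ lies in $\supp^+(T_0) \subset \supp(T_0) \cup -\supp(T_0)$; hence either $\supp(T') \cap \supp(T_0) \neq \emptyset$, or else $\supp(T') \subset -\supp(T_0) \subset G_0$, in which case I replace $T'$ by $-T' \in \mathcal B_{\rat}(G_0)$, whose support then meets $\supp(T_0)$. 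In either case Lemma \ref{lem-key-tech} applied with $S = T_0$ produces a rational zero-sum sequence $\tilde T \in \mathcal B_{\rat}(G_0)$ satisfying $\supp(\tilde T) \subsetneq \supp(T_0) \subset \supp(S)$ and $\supp^+(\tilde T) \subsetneq \supp^+(T_0)$. If $\supp^+(\tilde T) \neq \emptyset$, then $\tilde T \in \mathcal T$ contradicts the minimality of $T_0$; if $\supp^+(\tilde T) = \emptyset$, then $\varphi(\tilde T) = 0$, and unwinding the definition of $\tilde T$ gives $\varphi(T_0) = \alpha \varphi(T')$ (up to sign) for some $\alpha \in \Q_{>0}$, forcing $\supp^+(T_0) = \supp^+(T')$ and contradicting the strict inclusion. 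I expect this case analysis, in particular the sign-flip when $\supp(T') \cap \supp(T_0) = \emptyset$ and the degenerate possibility $\supp^+(\tilde T) = \emptyset$, to be the main obstacle.

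Finally, knowing that $T_0$ is an elementary rational zero-sum sequence, I would pick $n \in \N$ with $T_0^n \in \mathcal B(G_0)$; since $\supp^+(T_0^n) = \supp^+(T_0)$, the sequence $T_0^n$ is also elementary. Proposition \ref{prop-char} then supplies an atom $U \in \mathcal A(G_0)$ with $\supp^+(U) = \supp^+(T_0)$ such that $T_0^n = R\,U^\ell$ or $T_0^n = R\,(-U)^\ell$, with $R \in \ker(\varphi)$ and $\ell \geq 1$. Whichever of $U, -U$ actually divides $T_0^n$ in $\mathcal F(G_0)$ then has its support contained in $\supp(T_0^n) = \supp(T_0) \subset \supp(S) \subset G_0$, providing the required elementary atom.
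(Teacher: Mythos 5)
Your proof is correct, and it uses the same core engine as the paper --- Lemma \ref{lem-key-tech} applied to shrink the signed support, together with an extremal choice on $\supp^+$ --- but packages it differently. The paper runs a single minimal-counterexample argument: take $S$ with $\supp^+(S)$ minimal among counterexamples, produce an elementary atom $U$ with $\supp^+(U)\subset \supp^+(S)$, apply Lemma \ref{lem-key-tech} with $T=U$, and in the degenerate case $\supp^+(\tilde{S'})=\emptyset$ observe directly that $\supp(U)=\supp(S)$, so that no appeal to Proposition \ref{prop-char} is needed. You instead minimize $\supp^+(T_0)$ over rational zero-sum sequences $T_0$ whose ordinary support already lies inside $\supp(S)$, prove that this $T_0$ is elementary (with Lemma \ref{lem-key-tech} in the same role, and the degenerate case $\supp^+(\tilde T)=\emptyset$ resolved via $\varphi(T_0)=\pm\alpha\varphi(T')$), and then hand off to Proposition \ref{prop-char} to extract the atom, noting that whichever of $U,-U$ divides $T_0^n$ in $\mathcal{F}(G_0)$ inherits $\supp\subset\supp(T_0)\subset\supp(S)$. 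Your version is more modular --- it cleanly separates ``find an elementary $T_0$ supported in $\supp(S)$'' from ``extract an atom from an elementary sequence'' --- at the cost of pulling in Proposition \ref{prop-char} (and thus Lemmas \ref{lem-atom-uniqueness} and \ref{lem-step}), which the paper's argument for this particular lemma avoids. Both routes work; you also correctly identified the two genuine delicacies (the sign-flip when $\supp(T')\cap\supp(T_0)=\emptyset$, and the possibility $\supp^+(\tilde T)=\emptyset$) and handled both.
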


\begin{proof}
Assume for the sake of contradiction that $S\in \mathcal B_{\rat}(G_0)$ is a counterexample with $\supp^+(S)\neq \emptyset$ minimal. By removing rational  zero-sum subsequences of the form $((-g) g)^{\min\{\vp_g(S),\,\vp_{-g}(S)\}}$ and $0^{\vp_0(S)}$ from $S$ as defined in \eqref{R-def}, we may, without loss of generality, assume that $\supp(S)\cap \supp(-S)=\emptyset$. Let $U\in \mathcal B(G_0)$ be an elementary zero-sum sequence with $$\supp^+(U)\subset \supp^+(S).$$ Note that such an elementary zero-sum sequence exists with $\supp^+(U)=X$ for any minimal nonempty subset $X\subset \supp^+(S)$  provided there exists a zero-sum sequence with signed support $X$.

In view of the observations made prior to Lemma \ref{lem-key-tech}, we may assume that $U$ has no nontrivial zero-sum sequence of length $1$ or $2$. Now, if $U=U_1\bdot\ldots\bdot U_\ell$ is a factorization as a product of atoms $U_i\in \mathcal A(G_0)$, we find that $\supp^+(U_i)=\supp^+(U)$ for each $i \in [1,l]$, else $U$ is not an elementary zero-sum sequence. Therefore, replacing $U$ by some $U_i$ as need be, we may without loss of generality assume that $U\in \mathcal A(G_0)$ is an atom.

Since $\supp^+(U)\subset \supp^+(S)$, $\supp(U)$ and $\supp(S)$ are disjoint only if $-g\in \supp(S)\subset G_0$ for every $g\in \supp(U)$. In this case, $-U\in \mathcal A(G_0)$ is also an atom over $G_0$. Thus, replacing $U$ by $-U$ (in this one scenario), we may assume that $\supp(U)\cap \supp(S)\neq \emptyset$. Since an elementary atom cannot have a nontrivial zero-sum sequence of length $1$ or $2$, we may apply Lemma \ref{lem-key-tech} with $T=U$. Now let $\alpha$, \ $S'$, \ $R$ and $\tilde{S'}$ be as in Lemma \ref{lem-key-tech}. Since $S$, \ $U$ and $R$ are each zero-sum sequences, it follows from the definition of $\tilde{S'}$ that $\tilde{S'}$ is also a zero-sum sequence. Therefore $$\tilde{S'}\in \mathcal B_{\rat}(G_0),$$ an improvement over $\tilde{S'}\in \Ff(G_0)$ given in Lemma \ref{lem-key-tech}.

If $\supp^+(\tilde{S'})=\supp^+(S')$ is nonempty, then by the strict inclusion $\supp^+(\tilde{S'})\subsetneq \supp^+(S)$ in \eqref{toosmall}, we may apply the induction hypothesis to $\tilde{S'}\in \mathcal B_{\rat}(G_0)$ and find an elementary atom
$V\in\mathcal A(G_0)$ such that $\supp(V)\subset \supp(\tilde{S'})$. The proof is then complete if one considers the other inclusion in \eqref{toosmall}. Therefore we may assume the alternative, that \be\label{fillup}\supp^+(\tilde{S'})=\supp^+(S')=\emptyset.\ee Recalling that $S'=(-U)^\alpha S$, that $\supp^+(U)\subset \supp^+(S)$, and that $\supp(S)\cap \supp(-S)=\supp(U)\cap \supp(-U)=\emptyset$, we see that \eqref{fillup} is possible only if $\supp(U)=\supp(S)$ with $\vp_{g}(U^\alpha)=\vp_g(S)$ for every $g\in \supp(U)=\supp(S)$.
\end{proof}

\medskip
Let $G_0\subset \Z^r\subset \Q^r$ be a finite subset. Note that the chosen partition of $G_0\setminus \{0\}$ gives rise to a unique partition of $G_0 \cup -G_0 \setminus \{0\}$ with $(G_0 \cup -G_0)^+ = G_0^+$. To this partition we may again associate a map $\Ff(G_0 \cup -G_0) \to \Q^{(G_0^+)}$ which we also denote by $\varphi$.
Then $\varphi\colon \Ff(G_0) \to \Q^{(G_0^+)}$ is simply the restriction of $\varphi\colon \Ff(G_0 \cup -G_0) \to \Q^{(G_0^+)}$ to $\Ff(G_0)$.
By construction, we have
\[
\varphi(\mathcal B(G_0))\subset \Z^{(G_0^+)} \subset \Q^{(G_0^+)} \,.
\]
It is easily checked that $\varphi(\mathcal B_{\rat}(G_0))$ is an additive monoid closed under multiplication by nonnegative rational numbers. The $\Q$-vector space spanned by $\varphi(\mathcal B_{\rat}(G_0))$ is then $\varphi(\mathcal B_{\rat}(G_0\cup -G_0))$, which is also the $\Q$-vector space spanned by $\varphi(\mathcal B(G_0))=\varphi(\mathcal B_{\rat}(G_0))\cap\Z^{(G_0^+)}$.

Note that vector $(\alpha_g)_{g\in G_0^+}\in \Q^{(G_0^+)}$ is an element of $\varphi(\mathcal B_{\rat}(G_0\cup -G_0))$ precisely when $$\Summ{g\in G_0^+}\alpha_gg=0.$$
Thus, if we let $M$ denote the $r\times |G_0^+|$ matrix whose columns are the vectors $g\in G_0^+\subset \Z^r\subset\Q^r$, we see that $\varphi(\mathcal B_{\rat}(G_0\cup -G_0))$ is the kernel of the matrix $M$. The set $\varphi(\mathcal B_{\rat}(G_0))$ can also be described via $M$; It is the subset consisting of all vectors $(\alpha_g)_{g\in G_0^+}\in \ker(M)$ that satisfy the following sign restrictions:
\[
  \alpha_g \ge 0\,\text{ unless }\, {-g} \in G_0.
\]
(Recall that we always assume $-G_0^-\subset G_0^+$, and thus $\alpha_g > 0$ is allowed for every $g \in G_0^+$.)
It is well known that the kernel of a matrix $M$ is the orthogonal space for the row space of the same matrix $M$, and that the row and column space of $M$ have the same dimension, which in this case is equal to the dimension of the $\Q$-vector space spanned by the vectors from $G_0\subset \Z^r\subset \Q^r$. This latter number is simply $\rk( \langle G_0 \rangle)$ and thus we conclude that
\be\label{dim-equation}|G_0^+|=\rk( \langle G_0 \rangle)+\dim_{\Q}\big( \langle \varphi(\mathcal B(G_0)) \rangle \big).\ee

\medskip
The next theorem (essentially due to Rockafellar \cite{Ro69a} in a matroid formulation) shows that elementary zero-sum sequences can be useful for decomposing a zero-sum sequence via rational product decomposition. Indeed, an arbitrary zero-sum sequence always has a product decomposition into a bounded number of rational powers of elementary atoms. It also shows that an upper bound for $\mathsf D(G_0)$ can be found using an upper bound for $\mathsf{D^{elm}}(G_0)$.
It is important to note that, even if $S\in \mathcal B(G_0)$ is an atom, $S$ may still have a nontrivial product decomposition into rational powers of elementary atoms if it is not itself elementary.

\smallskip
\begin{theorem}\label{thm-elm-atom-bound}
Let $r\geq 1$, let $G_0\subset \Z^r$ be a nonempty subset, and let $S\in \mathcal B_{\rat}(G_0)$ be a rational zero-sum sequence. Then there exist a nonnegative integer $\ell\geq 0$, elementary atoms $U_1,\ldots,U_\ell\in \mathcal A(G_0)$, positive rational numbers $\alpha_0,\,\alpha_1,\ldots,\alpha_\ell\in\Q_{>0}$, and a zero-sum sequence $R\in \ker(\varphi)$ (possibly trivial) having a factorization into zero-sum subsequences each of length $1$ or $2$. Moreover, \begin{align*}&S=R^{\alpha_0} U^{\alpha_1}_1\bdot \ldots \bdot U^{\alpha_\ell}_\ell, \quad \supp^+(U_j)\nsubseteq \supp^+\left(S \left(\prod_{i=1}^j U_i^{\alpha_i}\right)^{-1}\right)\mbox{ for all $i\in [1,\ell]$,}\quad \und\\ & \ell\leq \min\left\{\frac12|\supp^+(S)|,\,|G_0^+|-\rk( \langle G_0 \rangle)\right\}.\end{align*} In particular, $$\mathsf D(G_0)\leq \sup\{2,\;\hat \ell \;\mathsf {D^{elm}}(G_0)\}\leq \sup\{2,\,\min\{\eta,\,|G_0^+|-\rk( \langle G_0 \rangle)\}\; \mathsf {D^{elm}}(G_0)\}\leq\sup\{2,\; |G_0\setminus\{0\}|\;\mathsf {D^{elm}}(G_0)\},$$ where $\eta=\sup\{|\supp(U)|:\; U\in \mathcal A(G_0)\}$ and $\hat \ell$ denotes the supremum over all $\ell$ as $S$ ranges over $\cA(G_0)$.
\end{theorem}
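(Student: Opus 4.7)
The plan is to establish the decomposition by induction on $|\supp^+(S)|$, after first reducing to the normalized case $\supp(S)\cap\supp(-S)=\emptyset$. Using the construction preceding Lemma~\ref{lem-key-tech}, I write $S = R_0^* \cdot S'$ where $R_0^* = 0^{\vp_0(S)}\prod_{g\in G_0^+}(g(-g))^{\min\{\vp_g(S),\vp_{-g}(S)\}} \in \ker(\varphi)$ and $S' \in \mathcal{B}_\rat(G_0)$ has $\supp(S')\cap\supp(-S')=\emptyset$. Then $\supp^+(S')=\supp^+(S)$, and $R_0^*$ is a rational power of a single integer zero-sum sequence with a factorization into length-$\le 2$ pieces. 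It suffices to decompose $S'$; the factor $R_0^*$ will then be combined into the final $R^{\alpha_0}$ by choosing $R$ to be a sufficiently high integer power of $R_0^*$.

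If $\supp^+(S') = \emptyset$ then (by normalization) $S'$ is trivial and I set $\ell = 0$. Otherwise Lemma~\ref{lem-existance-subsupport} supplies an elementary atom $U_1 \in \cA(G_0)$ with $\supp(U_1)\subset \supp(S')$; since $U_1$ is an elementary atom, $\supp(U_1)\cap\supp(-U_1)=\emptyset$, and combined with normalization this gives $\supp^+(U_1)\subset\supp^+(S')$. Set $\alpha_1 := \min\{\vp_g(S')/\vp_g(U_1): g\in\supp(U_1)\} > 0$ and $T_1 := S' U_1^{-\alpha_1}$. Then $T_1\in\mathcal{B}_\rat(G_0)$, and for $g_1\in\supp(U_1)$ realizing the minimum we have $\vp_{g_1}(T_1)=0$ while $-g_1 \notin \supp(S')\cup\supp(U_1)$ forces $\vp_{-g_1}(T_1)=0$. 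Thus $T_1$ remains normalized and both $g_1$ and $-g_1$ are removed from the signed support, so $|\supp^+(T_1)|\le |\supp^+(S')|-2$. Iterating yields the $U_j$ satisfying the greediness condition $\supp^+(U_j)\nsubseteq\supp^+(T_j)$ (witnessed by $g_j$), and the bound $\ell \le \tfrac12|\supp^+(S)|$ is immediate.

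To prove $\ell \le |G_0^+| - \rk(\la G_0 \ra)$, by equation~\eqref{dim-equation} it suffices to show that the vectors $W_j := \varphi(U_j)\in\Q^{(G_0^+)}$ are linearly independent inside the span of $\varphi(\cB(G_0))$. For each $j$ let $h_j\in G_0^+$ be whichever of $g_j$ and $-g_j$ lies in $G_0^+$. Then $(W_j)_{h_j}\ne 0$ because $g_j\in\supp^+(U_j)$. For $i\ge j$ the strict chain $\supp^+(T_0)\supsetneq\supp^+(T_1)\supsetneq\cdots$ gives $g_j\notin\supp^+(T_i)$, hence $(\varphi(T_i))_{h_j}=0$; differencing the relation $\varphi(T_{i-1})-\varphi(T_i)=\alpha_i W_i$ with $\alpha_i>0$ then forces $(W_i)_{h_j}=0$ for all $i>j$. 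Reading off coordinate $h_1$ of an arbitrary relation $\sum c_j W_j=0$ yields $c_1=0$, and proceeding through $h_2,h_3,\ldots$ gives $c_j=0$ for all $j$ by a triangular elimination.

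Finally, for the Davenport bound let $U\in\cA(G_0)$ with decomposition $U=R^{\alpha_0}U_1^{\alpha_1}\bdot\ldots\bdot U_\ell^{\alpha_\ell}$. Atoms with $|U|\le 2$ account for the ``$2$'' in $\sup\{2,\hat\ell\,\mathsf{D^{elm}}(G_0)\}$. For $|U|\ge 3$, the atom $U$ contains no length-$1$ or length-$2$ zero-sum subsequence, and since $\vp_0(U_i)=0$ and $\supp(U_i)\cap\supp(-U_i)=\emptyset$ for each elementary $U_i$, comparing multiplicities at $0$ and at any $g,-g$ both occurring in $R$ would force a forbidden short zero-sum subsequence of $U$; hence $R$ is trivial. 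If some $\alpha_j\ge 1$ then $U_j\mid U$ in $\cB(G_0)$, and since $U$ is an atom this forces $U=U_j$, an elementary atom; in the complementary case all $\alpha_j<1$, whence $|U|=\sum_j\alpha_j|U_j| \le \ell\cdot\mathsf{D^{elm}}(G_0) \le \hat\ell\cdot\mathsf{D^{elm}}(G_0)$. The remaining links in the chain of inequalities follow from applying $\ell\le\frac12|\supp^+(U)|\le|\supp(U)|\le\eta$ to atoms (yielding $\hat\ell\le\eta$) together with $\ell\le|G_0^+|-\rk(\la G_0\ra)$ and $G_0^+\subset G_0\setminus\{0\}$. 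I expect the main technical obstacle to be ensuring that the witness $g_j$ simultaneously lies in $\supp^+(U_j)\setminus\supp^+(T_j)$ and never re-enters any later signed support $\supp^+(T_k)$, as this triangular structure is exactly what unlocks the dimension bound.
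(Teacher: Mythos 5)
Your proposal is correct and follows essentially the same route as the paper: normalize by factoring out the length-$\le 2$ part, greedily peel off elementary atoms via Lemma~\ref{lem-existance-subsupport} with the same choice of $\alpha_j$, track the strictly decreasing chain of (signed) supports to get both the $\tfrac12|\supp^+(S)|$ bound and the linear-independence/dimension bound, and then bound $|U|$ for an atom $U$ by observing that all $\alpha_i\le 1$. The only cosmetic difference is that you make the linear-independence step explicit as a triangular elimination while the paper leaves it as "easily deduced," and you phrase the recursion as an iteration on $|\supp^+|$ rather than a minimal counterexample on $|\supp|$; these are equivalent.
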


\begin{proof}
We first construct the rational product decomposition for $S\in \mathcal B_{\rat}(G_0)$ and then deduce the upper bound for $\mathsf D(G_0)$. To this end, in view of the observations made prior to Lemma \ref{lem-key-tech}, we may without loss of generality assume that $S$ is nontrivial and that \be \label{thecondition} \{g\in G_0^+\, :\, g,-g \in \supp(S)\}=\emptyset. \ee It suffices to prove the theorem for such rational sequences.

Our goal is to show that there exist elementary atoms $U_1,\ldots,U_\ell\in \mathcal A(G_0)$ and positive rational numbers $\alpha_1,\ldots,\alpha_\ell\in\Q_{>0}$ such that \begin{align}\label{wolff}S=U^{\alpha_1}_1\bdot \ldots \bdot U^{\alpha_\ell}_\ell \quad \und\quad \supp^+(U_j)\nsubseteq \supp^+\left(S \left(\prod_{i=1}^j U_i^{\alpha_i}\right)^{-1}\right)\mbox{ for all $i\in [1,\ell]$}.\end{align} Before doing so, we explain how \eqref{wolff} forces the desired upper bound for $\ell$. The second condition from \eqref{wolff} says that each atom $U_j$ contains an element  $g_j\in\supp(S)$ (via the first condition of \eqref{wolff}) not contained in any of the $U_{j+1},\ldots,U_\ell$.  But then $\{g_1,\ldots,g_\ell\}\subset \supp(S)$ is a subset of cardinality $\ell$, implying $\ell\leq |\supp(S)|=\frac12|\supp^+(S)|$.
In view of \eqref{thecondition} and since $g_j\in\supp(S)$, we see that $-g_j\notin \supp(S)$ for each $j\in [1,\ell]$. As a result,
since each atom $U_j$ contains some element  $g_j$ not contained in any of the $U_{j+1},\ldots,U_\ell$,
no $g_j\in \supp^+(U_j)$ is contained in $\supp^+(U_{j+1}\bdot\ldots\bdot U_\ell)$. Now it is easily deduced that $\varphi(U_1),\ldots,\varphi(U_\ell)\in \varphi(\mathcal B(G_0))$ are linearly independent over $\Q$. Therefore $\ell\leq \dim_\Q( \langle\varphi(\mathcal B(G_0))\rangle)=|G_0^+|-\rk( \langle G_0 \rangle)$, with the equality following from \eqref{dim-equation}. Thus the desired bound for $\ell$ follows from \eqref{wolff}, and we now devote our attention to proving \eqref{wolff}. For this, suppose for the sake of contradiction that $S\in \mathcal B_{\rat}(G_0)$ is a counterexample to \eqref{wolff} with $|\supp(S)|$ minimal.

Since $S$ is nontrivial, \eqref{thecondition} forces $\supp^+(S)\neq \emptyset$. Thus,
by Lemma \ref{lem-existance-subsupport}, there exists an elementary atom $U_1\in \mathcal A(G_0)$ with $\supp(U_1)\subset \supp(S)$. Let $\alpha_1=\min\{\vp_g(S)/\vp_g(U_1): g\in \supp(U_1)\}$. Since $\supp(U_1)\subset \supp(S)$, $\alpha_1>0$. By definition, $\vp_{g}(U_1^{\alpha_1})\leq \vp_g(S)$ for every $g\in \supp(U_1)$, with equality holding for some $g=g_1\in\supp(U_1)\subset \supp(S)$ (attaining the minimum in the definition of $\alpha_1$).
Define $S'=S U_1^{-\alpha_1}$. Since $\vp_{g_1}(U_1^{\alpha_1})=\vp_{g_1}(S)$ with $g_1\in \supp(S)$, we conclude that $\supp(S')$ is a proper subset of $\supp(S)$. Indeed, $g_1\notin \supp(S')$. Since $S$ and $U_1$ are each zero-sum sequences, it follows that $S'$ is also a zero-sum sequence. Thus,
in view of the minimality of $|\supp(S)|$, we can apply the theorem to (the possibly trivial) rational sequence $\tilde{S'}$ to find $\tilde{S'}=U_2^{\alpha_2}\bdot\ldots\bdot U_\ell^{\alpha_\ell}$ for some positive rational numbers $\alpha_i$ and elementary atoms $U_i$ satisfying \eqref{wolff}. But now $S=U_1^{\alpha_1} S'=U_1^{\alpha_1} U_2^{\alpha_2}\bdot\ldots\bdot U_\ell^{\alpha_\ell}$ with \eqref{wolff} holding for $j\in [2,\ell]$. Since $g_1\in \supp(U_1)$ and $g_1\notin \supp(S')=\supp(U_2\bdot\ldots\bdot U_\ell)$, we see that \eqref{wolff} holds when $j=1$ and thus \eqref{wolff} is established. This completes the proof of the first part of the theorem. It remains to prove the upper bound for $\mathsf D(G_0)$.

Let $U\in \mathcal A(G_0)$ be an atom. We must show that $|U|$ is at most the bound given at the end of Theorem \ref{thm-elm-atom-bound}. If $|U|\leq 2$, this is clearly the case and so we may assume that $|U|\geq 3$. In this case we may assume that $\supp(U)\cap \supp(-U)=\emptyset$. Let $$U=U_1^{\alpha_1}\bdot\ldots\bdot U_{\ell}^{\alpha_\ell}$$ be the rational product decomposition of $U$ given by the first part of the theorem. In particular, each $U_i\in \mathcal A(G_0)$ is an elementary atom and each $\alpha_i\in \Q_{>0}$ is a positive rational number. We note that the corresponding sequence $R$ is trivial since $\supp(U)\cap \supp(-U)=\emptyset$. If $\alpha_i>1$ for some $i\in [1,\ell]$, then $U_i\mid U$ is a proper nontrivial zero-sum subsequence, contradicting that $U\in \mathcal A(G_0)$ is an atom. Thus $\alpha_i\leq 1$ for all $i\in [1,\ell]$. Now  $$|U|=\Sum{i=1}{\ell}\alpha_i|U_i|\leq \Sum{i=1}{\ell}|U_i|\leq \ell \mathsf D^{\mathsf{elm}}(G_0),$$ and, noting that $\supp(U)=\frac12|\supp^+(U)|$ (since $\supp(U)\cap \supp(-U)=\emptyset$), the desired bound for $|U|$ follows from the bound for $\ell$ given in the first part of the theorem.
\end{proof}

We now consider which subsets $X\subset G_0\subset \Z^r$ can be attained as the support of an elementary zero-sum sequence.  A related question asks which subsets $X\subset G_0\cup -G_0\subset \Z^r$ can be attained as the signed support of an elementary zero-sum sequence. Given any $U\in \mathcal B_{\rat}(G_0)$, we know that $U^n\in \mathcal B(G_0)$ for some $n\geq 1$. Applying Lemma \ref{prop-char}, we see that $X=\supp^+(U)$ for some elementary $U\in \mathcal B(G_0)$ is equivalent to $X=\supp^+(U)$ for some elementary $U\in \mathcal B_{\rat}(G_0)$ which in turn is equivalent to $X=\supp^+(U)$ for some elementary $U\in \mathcal A(G_0)$. The same is true when considering $X=\supp(U)$ for an elementary zero-sum sequence $U$ over $G_0$. Of course, if $X=\supp^+(U)$ for a zero-sum sequence $U$, then $X\subset G_0\cup -G_0$ is symmetric and so $X=Y\cup -Y$ for some $Y\subset G_0$ with $Y\cap-Y=\emptyset$ .The following lemma classifies the possibilities for $X$.

\smallskip
\begin{lemma}\label{lem-elementary-supp}
Let $r\geq 1$, let $G_0\subset \Z^r$ be a nonempty subset, and let $X\subset G_0\cup -G_0$ be a subset with $X=-X$. Then condition (a) holds if and only if conditions (b) and (c) both hold. If $G_0=-G_0$, then (a) and (b) are equivalent.

\begin{itemize}
\item[(a)]  $X=\supp^+(U)$ for some elementary zero-sum sequence $U\in \mathcal B(G_0)$.
\item[(b)] The elements of $X\cap G_0^+$ are linearly dependent over $\Q$, but any proper subset of $X \cap G_0^+$ is linearly independent over $\Q$.
\item[(c)] There exists a nontrivial zero-sum sequence $S\in \mathcal B(G_0)$ with $\emptyset\neq \supp^+(S)\subset X$.
\end{itemize}
In particular, if $U\in \mathcal B(G_0)$ is an elementary zero-sum sequence such that $\supp(S)\cap \supp(-S)=\emptyset$, then $|\supp(U)|\leq r+1$.
\end{lemma}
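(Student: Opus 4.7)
The plan is to translate between elementary zero-sum sequences over $G_0$, linear dependencies on $X \cap G_0^+$, and vectors in the sign-restricted cone inside $K_X := \ker(M) \cap \Q^{(X \cap G_0^+)}$, where $M$ is the matrix with columns $G_0^+$. A vector $v \in K_X$ is of the form $\varphi(T)$ for some rational zero-sum sequence $T \in \mathcal{B}_{\rat}(G_0)$ with $\supp^+(T) \subset X$ precisely when $v_g \ge 0$ at every sign-restricted coordinate, meaning every $g \in X \cap G_0^+$ with $-g \notin G_0$. Note that $\varphi(U)_g > 0$ at every sign-restricted coordinate: indeed $\vp_{-g}(U) = 0$ together with $g \in \supp^+(U)$ forces $\vp_g(U) > 0$. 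Hence $\varphi(U)$ lies in the relative interior of the sign cone in $K_X$.

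For (a)$\Rightarrow$(c), set $S = U$. For (a)$\Rightarrow$(b), $\varphi(U) \in K_X$ has full support $X \cap G_0^+$, witnessing a linear dependence. For minimality, suppose for contradiction that some proper subset $Y \subsetneq X \cap G_0^+$ is dependent, yielding a nonzero $v \in K_X$ with $\supp(v) \subseteq Y$, necessarily linearly independent from $\varphi(U)$. Consider the line $v_\lambda := \lambda v + \varphi(U) \in K_X$. Either $v_g = 0$ at every sign-restricted coordinate (so $v$ itself satisfies the sign conditions, with $\supp(v) \subsetneq X \cap G_0^+$), or the sign conditions confine $\lambda$ to an interval with at least one finite endpoint $\lambda^*$, at which $v_{\lambda^*}$ still satisfies all sign conditions, acquires a new vanishing coordinate, and is nonzero by linear independence of $v$ and $\varphi(U)$. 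Either way, we obtain a nonzero $w \in K_X$ satisfying the sign conditions with $\supp(w) \subsetneq X \cap G_0^+$, giving a rational zero-sum sequence $T \in \mathcal{B}_{\rat}(G_0)$ with $\emptyset \ne \supp^+(T) \subsetneq X$, contradicting the elementariness of $U$.

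For (b)$+$(c)$\Rightarrow$(a), I first apply the reduction described just before Lemma \ref{lem-key-tech} to replace $S$ from (c) by $S'$ with $\supp(S') \cap \supp(-S') = \emptyset$ and $\supp^+(S') = \supp^+(S) \subset X$ nonempty, so $\supp(S') \subset \supp^+(S') \subset X$. Lemma \ref{lem-existance-subsupport} then supplies an elementary atom $U \in \mathcal{A}(G_0)$ with $\supp(U) \subset \supp(S') \subset X$, hence $\supp^+(U) \subset X$. The nontrivial relation $\varphi(U) \in K_X$ is supported in $\supp^+(U) \cap G_0^+ \subseteq X \cap G_0^+$, and (b) forces this support to equal $X \cap G_0^+$ (since every nonempty proper subset is linearly independent). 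Because both $X$ and $\supp^+(U)$ are symmetric and thus determined by their intersections with $G_0^+$, this yields $\supp^+(U) = X$. When $G_0 = -G_0$, (b) directly implies (c): any integer relation $\sum_{g \in X \cap G_0^+} \alpha_g g = 0$ with all $\alpha_g \ne 0$ produces the sequence $\prod_{\alpha_g > 0} g^{\alpha_g} \prod_{\alpha_g < 0} (-g)^{-\alpha_g} \in \mathcal{B}(G_0)$ with signed support $X$.

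The in-particular statement follows from (a)$\Rightarrow$(b): since $\supp(U) \cap \supp(-U) = \emptyset$, each pair $\{g, -g\} \subset \supp^+(U)$ contributes exactly one element to $G_0^+$, so $|X \cap G_0^+| = |\supp(U)|$; minimal dependence then gives $|\supp(U)| = \rk(X \cap G_0^+) + 1 \le r + 1$. The main technical difficulty lies in the realizability step of (a)$\Rightarrow$(b): not every linear dependence on $X \cap G_0^+$ arises from a zero-sum sequence over $G_0$ due to sign restrictions, a subtlety handled by the convex-geometric line argument in $K_X$ and which is precisely what vanishes when $G_0 = -G_0$.
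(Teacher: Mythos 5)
Your proof is correct and follows essentially the same structure as the paper's. Two minor differences are worth noting. For (a)$\Rightarrow$(b), you phrase the reduction step geometrically (moving along the line $\lambda v + \varphi(U)$ inside the sign-constrained cone of $\ker(M)$ until a constraint becomes tight), whereas the paper performs the equivalent coefficient computation directly, subtracting $\gamma\beta_g$ from $\alpha_g$ to zero one coordinate while preserving the sign conditions; the two are substantively identical. For (b)$+$(c)$\Rightarrow$(a), you first reduce the sequence $S$ from (c) to $S'$ with $\supp(S')\cap\supp(-S')=\emptyset$ — a step that is genuinely needed so that $\supp(S')\subset X$ — and then invoke Lemma~\ref{lem-existance-subsupport} to extract an elementary atom $U$ with $\supp(U)\subset X$ before concluding $\supp^+(U)=X$ from (b). The paper instead argues directly from $\varphi(S)$: any nontrivial rational zero-sum sequence with $\emptyset\ne\supp^+\subset X$ is forced by (b) to have signed support exactly $X$, and (b) also ensures it is elementary. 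Your route via Lemma~\ref{lem-existance-subsupport} makes the elementariness of the resulting sequence completely explicit, which is a small but welcome clarification; otherwise the arguments coincide.
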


\begin{proof}
We set $Y = X \cap G_0^+$. Suppose that (a) holds. Then (c) holds and for each $g\in Y$ there exists a nonzero $\alpha_g\in \Q\setminus\{0\}$ such that \be\Summ{g\in Y}\alpha_gg=0\quad\mbox{ with $\alpha_g>0$ whenever $-g\notin G_0$}.\label{going}\ee  Since each $\alpha_g$ is nonzero, the elements of $Y$ are linearly dependent.

Now, if we assume that condition (b) does not hold, then there must be some nonempty, proper subset $Z\subsetneq Y$ such that the elements of $Z$ are linearly dependent. But then for each $g \in Z$ there exist $\beta_g\in \Q$ such that
\be\Summ{g\in Z}\beta_gg=0\label{stillgoing}\ee with not all $\beta_g$ zero.
Set $\beta_g=0$ for any $g\in Y\setminus Z$.
Suppose first that there is some $g \in Y$ with $\alpha_g > 0$ and $\beta_g \ne 0$.
Multiplying the $\beta_g$ by $-1$ if need be, we may assume $\alpha_g >0$ and $\beta_g > 0$.
In this case, let $\gamma=\min\{\alpha_g/\beta_g:\; \alpha_g>0,\;\beta_g>0,\; g\in Y\}>0$.
If $\beta_g=0$ whenever $\alpha_g > 0$, we set $\gamma = \min\{ \alpha_g/\beta_g:\; \alpha_g <0,\; \beta_g < 0,\; g \in Y\} > 0$.
Multiplying \eqref{stillgoing} by $\gamma$, we obtain
\be\label{getgoing}\Summ{g\in Z}\gamma\beta_gg=0.\ee Moreover, by the definition of $\gamma$ we see that $\gamma\beta_g\leq \alpha_g$ whenever $\alpha_g>0$ and $\beta_g>0$. Thus, if we subtract \eqref{getgoing} from \eqref{going}, the resulting coefficient $\alpha_g-\gamma\beta_g$ will be non-negative whenever $\alpha_g>0$. As a result, since $\alpha_g>0$ whenever $-g\notin G_0$ (by \eqref{going}), we see that
\be\label{flyll}\Summ{g\in Z}(\alpha_g-\gamma\beta_g)g=0\quad\mbox{with $\alpha_g-\gamma\beta_g\geq 0$ whenever $-g\notin G_0$}\ee
Furthermore, for an element  $g_1\in Y$  attaining the minimum in the definition of $\gamma$, we see that the coefficient $\alpha_{g_1}-\gamma\beta_{g_1}$ of $g_1$ in \eqref{flyll} is zero, while not all coefficients in \eqref{flyll} are zero since each $\alpha_g$ is nonzero and since at least one $\beta_g$ is zero ($Z$ is a proper subset of $Y$). Thus the $\Q$-linear relation given in \eqref{flyll} corresponds to  a nontrivial rational zero-sum sequence $V\in \mathcal B_{\rat}(G_0)$ whose support is strictly contained in the support of $U$, contradicting that $U\in \mathcal B(G_0)$ is an elementary zero-sum sequence. We can then conclude that all proper subsets of $Y$ are linearly independent, as desired.

\smallskip
Now suppose that (b) holds and that either $G_0=-G_0$ or (c) holds.
Clearly, there cannot be any zero-sum sequence $U$  over $G_0\subset \Z^r\subset \Q^r$ with $\supp(U)$ consisting of linearly independent elements over $\Q$. Thus, in order to show that (a) holds, it suffices to show that there exists some $U\in \mathcal B_{\rat}(G_0)$ with $\supp^+(U)=X$.

If (c) holds, then a nontrivial  $U\in \mathcal B_{\rat}(G_0)$ exists with $\emptyset \neq \supp^+(U)\subset X$. However, since $\supp^+(U)\cap G^+$ can only be linearly independent if $\supp^+(U)$ is empty, we have $\supp^+(U)=X$ as desired.

Next assume  $G_0=-G_0$. We need to show that there exists some $U\in \mathcal B_{\rat}(G_0)$ with $\supp^+(U)=X$. This  is equivalent to showing that there exists some nonzero $\alpha_1,\ldots,\alpha_\ell\in \Q\setminus \{0\}$ with $\Sum{i=1}{\ell}\alpha_ig_i=0$. As the elements of $Y$ are linearly dependent, there exist  $\alpha_1,\ldots,\alpha_\ell\in \Q$  not all zero with $\Sum{i=1}{\ell}\alpha_ig_i=0$. Thus, if $\alpha_i=0$ for some $i \in [1,\ell]$, this would give a dependence relation on the  elements of $Y\setminus\{g_i\}$, contradicting the hypothesis that every proper subset of $Y$ is linearly independent. Consequently, $\alpha_i\neq 0$ for all $i \in [1,\ell]$, and (a) follows as noted earlier.

\smallskip
If  $U\in \mathcal B(G_0)$ is an elementary zero-sum sequence such that $\supp(U)\cap\supp(-U)=\emptyset$, then the first part of the theorem implies that $\supp(U)\setminus \{g\}\subset \Q^r$ is a set of linearly independent vectors for any $g\in \supp(U)$.
Since any subset of vectors of size $r+1$ must be linearly dependent in $\Q^r$, the desired bound $|\supp(U)|\leq r+1$ follows.
\end{proof}

\smallskip
\begin{lemma}\label{lem-D3-existance}
Let $r\geq 1$ and let $G_0\subset \Z^r$ be a nonempty subset. Then $\mathsf D(G_0)\geq 3$ if and only if there exists an elementary atom $U\in \mathcal A(G_0)$. If this is the case, then $\mathsf D^{\mathsf{elm}}(G_0)\geq 3$ as well.
\end{lemma}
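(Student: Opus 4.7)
The plan is to reduce both implications, together with the trailing ``in particular'' assertion, to a single elementary observation: every elementary atom must have length at least $3$.

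To verify this observation, I would rule out atoms of length $1$ and $2$ directly from the definition of $\supp^+$. An atom $U \in \mathcal{A}(G_0)$ of length $1$ consists of a single term $g \in G_0$ with $\sigma(U) = g = 0$, so $U = 0$; since $-0 = 0$, one has $\vp_0(U) - \vp_{-0}(U) = 0$, whence $\supp^+(U) = \emptyset$ and $U$ is not elementary. An atom of length $2$ has the form $U = g \cdot (-g)$ with $g \in G_0 \setminus \{0\}$ and $-g \in G_0$; then $\vp_g(U) - \vp_{-g}(U) = 1 - 1 = 0$ and symmetrically for $-g$, so again $\supp^+(U) = \emptyset$. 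Hence every elementary atom has at least three terms.

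The ``if'' direction is then immediate: if an elementary atom $U \in \mathcal{A}(G_0)$ exists, then $|U| \ge 3$, so $\mathsf{D}^{\mathsf{elm}}(G_0) \ge 3$ and a fortiori $\mathsf{D}(G_0) \ge 3$. This simultaneously settles the final ``in particular'' assertion.

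For the ``only if'' direction, I would pick an atom $U \in \mathcal{A}(G_0)$ with $|U| \ge 3$ (which exists by hypothesis). The paragraph immediately preceding Lemma \ref{lem-key-tech} records that a zero-sum sequence over $G_0$ lies in $\ker(\varphi)$ precisely when it admits a factorization into zero-sum subsequences each of length at most $2$. Since $U$ is an atom of length $\ge 3$, it has no such nontrivial factorization, forcing $\varphi(U) \neq \mathbf{0}$ and consequently $\supp^+(U) \neq \emptyset$. Lemma \ref{lem-existance-subsupport} applied to $U$ then produces an elementary atom $V \in \mathcal{A}(G_0)$ with $\supp(V) \subset \supp(U) \subset G_0$, as desired.

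There is no serious obstacle here: all the deep work has been packaged into Lemma \ref{lem-existance-subsupport} and the structural observation about $\ker(\varphi) \cap \mathcal{F}(G_0)$. The only point demanding care is the bookkeeping with the definition of $\supp^+$ that shows length-$1$ and length-$2$ atoms have empty signed support; everything else is an immediate application of previously established results.
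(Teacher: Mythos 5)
Your proposal is correct and follows essentially the same route as the paper's own proof: one direction plus the trailing claim reduce to the observation that every elementary atom has length at least $3$, and the converse uses an atom $V$ of length $\ge 3$, the fact that $\supp^+(V)\ne\emptyset$, and Lemma~\ref{lem-existance-subsupport}. You spell out more carefully than the paper why elementary atoms cannot have length $1$ or $2$ and why $\supp^+(V)\ne\emptyset$ (via the $\ker(\varphi)$ characterization), but the structure and the key lemma invoked are identical.
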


\begin{proof}
Since any elementary atom $U\in \mathcal A(G_0)$ must satisfy $|U|\geq 3$, one direction is clear. Suppose now that $\mathsf D(G_0)\geq 3$ and let $V\in \mathcal A(G_0)$ be an atom with $|V|\geq 3$. Then $\supp^+(V)$ is nonempty, in which case  Lemma \ref{lem-existance-subsupport} completes the proof.
\end{proof}

Recall that, given any $m\times n$ integer matrix $M$ with $m\leq n$, we can perform elementary row and column operations on $M$ (swapping rows/columns, multiplying a row/column by $\pm 1$, or adding an integer multiple of a row/column to another row/column) to obtain a diagonal integer matrix  $D=(d_{i,j})_{i,j}$ with $d_{1,1}\mid\cdots\mid d_{m,m}$ and $d_{i,i}\geq 0$ for all $i \in [1,m]$. The matrix $D$ is unique and is known as the Smith normal form of the matrix $M$ and the $d_{i,i}$ are called the \emph{elementary divisors} of $M$. If $g_1,\ldots,g_n\in \Z^m$ are the columns of $M$, then $\Z^m/\la g_1,\ldots,g_n\ra\cong \Z/d_{1,1}\Z\oplus\cdots\oplus \Z/d_{m,m}\Z$. Thus, when $M$ has full rank (whence $d_{m,m}\neq 0$), we have  $d_{m,m}=\exp(\Z^m/\la g_1,\ldots,g_n\ra)$. It is easily checked (and well-known) that for $j\in [1,m]$ \begin{equation*}\gcd\{\det(M'):\; \mbox{$M'$ is a $j\times j$ submatrix of $M$}\}\end{equation*} remains invariant under elementary row and column operations and is thus equal to $d_{1,1}\cdots d_{j,j}$. In particular, if $m=n$, so that $M$ is a square matrix, \be\label{invariant-eq}d_{m,m}=\frac{|\det(M)|}{\gcd\{|\det(M')|:\; \mbox{$M'$ is a $(m-1)\times (m-1)$ submatrix of $M$}\}}.\ee These results can be found in many standard textbooks dealing with linear algebra over $\Z$.

\medskip
We now turn our attention to finding bounds for $\mathsf D(G_0)$ where $G_0\subset \Z^r$. Let $M$ be a $r\times |G_0^+|$ matrix whose columns are the vectors $g\in G_0^+\subset \Z^r$. Using lattice theory and results from the Geometry of Numbers, Diaconis, Graham, and Sturmfels \cite{Di-Gr-St93} showed that
\begin{equation*} \mathsf D(G_0)\leq (2r)^r(r+1)^{r+1}\max\{|\det(M')|:\; \mbox{$M'$ is a $r\times r$ submatrix of $M$}\}\end{equation*} when $G_0$ is finite with full rank $\rk(\la G_0\ra)=r$. However, when $|G_0|$ is not terribly large, Theorem \ref{thm-elm-atom-bound} can be used to obtain tighter bounds.
To do so, we need to be able to bound $\mathsf D^{\mathsf{elm}}(G_0)$ and to do this we consider an  argument of  Sturmfels \cite[Chapter 4]{MR1363949} which, when  combined with additional results, allows us to give a linear algebraic description of  $\mathsf D^{\mathsf{elm}}(G_0)$.
In order to state the next theorem we first need the following definition. For a collection of $r+1$ integer vectors $g_1,\ldots,g_{r+1}\in \Z^r$, we define
$$\Delta(g_1,\ldots,g_{r+1})=
\frac{
\Sum{i=1}{r+1}|\det(g_1,\ldots,g_{i-1},g_{i+1},\ldots,g_{r+1})|}
{\gcd\{|\det(g_1,\ldots,g_{i-1},g_{i+1},\ldots,g_{r+1})|: \; i\in [1,r+1]\}}\in \N_0,$$ where $\Delta(g_1,\ldots,g_{r+1})=0$ if $\rk(\la g_1,\ldots,g_{r+1}\ra)<r$.

\smallskip
\begin{theorem}\label{thm-diaconis}
Let $r\geq 1$ and let $G_0\subset  \Z^r$ be a nonempty subset with $\rk(\la G_0\ra)=r$ and
$\mathsf D(G_0)\geq 3$.
Then \begin{eqnarray*}\nn\mathsf D^{\mathsf{elm}}(G_0)&=&\sup\{\Delta(g_1,\ldots,g_{r+1}):\;g_1,\ldots,g_{r+1}\in G_0 \ \und \ \mathsf D(\{g_1,\ldots,g_{r+1}\})\geq 3\}\\
&\leq& \sup\{\Delta(g_1,\ldots,g_{r+1}):\;g_1,\ldots,g_{r+1}\in G_0\}. \end{eqnarray*} Moreover, if $G_0=-G_0$, then equality holds.
\end{theorem}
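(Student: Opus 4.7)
The plan is to reinterpret $\Delta(g_1,\ldots,g_{r+1})$ via Cramer's rule as the $\ell^1$-norm of the primitive integer vector in the one-dimensional kernel of the $r\times(r+1)$ matrix $M$ whose columns are $g_1,\ldots,g_{r+1}$, and then to match this quantity with the length $|U|$ of an elementary atom $U$ whose support lies in $\{g_1,\ldots,g_{r+1}\}$.

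For the inequality $\mathsf D^{\mathsf{elm}}(G_0)\le\sup\{\Delta(g_1,\ldots,g_{r+1}) : g_i\in G_0,\ \mathsf D(\{g_1,\ldots,g_{r+1}\})\ge 3\}$, first I would fix an elementary atom $U\in\mathcal A(G_0)$. Atomicity together with $\supp^+(U)\ne\emptyset$ forces $|U|\ge 3$ and $\supp(U)\cap\supp(-U)=\emptyset$, so Lemma \ref{lem-elementary-supp} yields $s:=|\supp(U)|\le r+1$; writing $\supp(U)=\{h_1,\ldots,h_s\}$ and $U=h_1^{m_1}\bdot\ldots\bdot h_s^{m_s}$, condition (b) of that lemma makes the $\Q$-relation lattice on $\{h_1,\ldots,h_s\}$ one-dimensional, and the atom condition forces $\gcd(m_1,\ldots,m_s)=1$; hence $(m_1,\ldots,m_s)$ is the primitive integer generator of that lattice and $|U|=m_1+\cdots+m_s$. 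Using $\rk\la G_0\ra=r$ I would pick $g_{s+1},\ldots,g_{r+1}\in G_0$ so that $\{h_1,\ldots,h_s,g_{s+1},\ldots,g_{r+1}\}$ has rank $r$. Cramer's rule writes the primitive kernel of the associated $r\times(r+1)$ matrix as $\bigl((-1)^{i+1}\det(M_i)/d\bigr)_{i=1}^{r+1}$ with $d=\gcd_i|\det(M_i)|$, and uniqueness of the primitive kernel vector forces $\det(M_j)=0$ for $j>s$ and $(-1)^{i+1}\det(M_i)/d=\pm m_i$ for $i\le s$. Summing absolute values yields $\Delta(h_1,\ldots,h_s,g_{s+1},\ldots,g_{r+1})=|U|$, with $\mathsf D$ of the extended set at least $|U|\ge 3$.

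For the reverse inequality, let $g_1,\ldots,g_{r+1}\in G_0$ satisfy $\mathsf D(\{g_1,\ldots,g_{r+1}\})\ge 3$; Lemma \ref{lem-D3-existance} produces an elementary atom $U\in\mathcal A(\{g_1,\ldots,g_{r+1}\})$. If $\rk\la g_1,\ldots,g_{r+1}\ra<r$ then $\Delta=0$ and the bound is trivial; if the rank equals $r$, the same determinant computation gives $\Delta(g_1,\ldots,g_{r+1})=|U|\le\mathsf D^{\mathsf{elm}}(G_0)$, using that the zero-extension of the primitive relation on $\supp(U)$ is forced (by uniqueness of the one-dimensional kernel) to be the primitive kernel of the full matrix. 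The weaker second inequality of the theorem is then free by enlarging the index set. For the symmetric case $G_0=-G_0$ I would remove the $\mathsf D\ge 3$ qualifier by sign-flipping: given $g_1,\ldots,g_{r+1}\in G_0$ with primitive relation $\sum\alpha_i g_i=0$ and $\Delta\ge 3$, replace each $g_i$ by $-g_i\in G_0$ whenever $\alpha_i<0$ to produce a zero-sum sequence in $\mathcal B(G_0)$ of length $\sum|\alpha_i|=\Delta$, which is an elementary atom by primitivity of $(\alpha_i)$ and the uniqueness-of-relation argument, while for $\Delta<3$ the desired bound follows from the running assumption $\mathsf D(G_0)\ge 3$. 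The main obstacle will be the bookkeeping when $\supp(U)$ is a proper subset of the chosen $(r+1)$-set, namely verifying that a support-restricted primitive relation, padded with zeros, remains the primitive kernel of the full $r\times(r+1)$ matrix; this is resolved by one-dimensionality of that kernel under the rank-$r$ hypothesis, which pins down its primitive integer generator up to sign.
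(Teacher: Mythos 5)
Your proposal is correct and takes essentially the same route as the paper: both arguments interpret $\Delta(g_1,\ldots,g_{r+1})$ via Cramer's rule as the $\ell^1$-norm of the primitive integer generator of the one-dimensional kernel of the associated $r\times(r+1)$ matrix, use uniqueness of this primitive vector (together with atomicity forcing $\gcd$ of multiplicities equal to $1$) to identify it with the multiplicity vector of an elementary atom $U$, invoke Lemma~\ref{lem-elementary-supp} and Lemma~\ref{lem-D3-existance} in the same places, and handle the symmetric case $G_0=-G_0$ by sign-flipping the $g_i$ according to the signs of the Cramer coefficients. The only presentational difference is that in the symmetric case you construct the elementary atom directly from the primitive kernel relation, whereas the paper reaches it by first verifying the dependence/independence conditions of Lemma~\ref{lem-elementary-supp} and then appealing to Proposition~\ref{prop-char}; the underlying argument is the same.
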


\begin{proof}
By Lemma \ref{lem-D3-existance}, the hypothesis $\mathsf D(G_0)\geq 3$ is equivalent to the existence of an elementary atom $U\in \mathcal A(G_0)$. Since any such elementary atom $U$ satisfies $|U|\geq 3$, we conclude that \be\label{stiy}\mathsf D^{\mathsf{elm}}(G_0)\geq 3.\ee
Let $U\in \mathcal A(G_0)$ be an arbitrary elementary atom and let $X=\supp(U)$. Since $U$ is an elementary atom, $\supp(U)\cap\supp(-U)=\emptyset$. By Lemma \ref{lem-elementary-supp}, $X$ is linearly dependent over $\Q$, but any proper subset of $X$ is linearly independent. In particular, $|X|=x+1\geq 2$ with $$1\leq \rk(\la X\ra )=x\leq \rk(\la G_0\ra)=r.$$ Thus, if $x<r$, then we can find a subset $X'\subset G_0\setminus (X\cup -X)$ such that $|X'|=r-x$ and $\rk(\la X\cup X'\ra)=r$. Let $Y=X\cup X'$. Note that $\supp(U)\subset Y\subset G_0$ with $|Y|=r+1$, and that, for each $g \in G_0$, $g$ and $-g$ are not both contained in $Y$. Let $Y=\{g_1,\ldots,g_{r+1}\}$ where $g_1,\ldots,g_{x+1}$ are the elements from $X$.

Let $M$ be the $r\times (r+1)$ matrix whose columns are the vectors $g_i\in Y\subset G_0$. Then the vector of integer multiplicities $\mathbf x=(x_i)_{i\in [1,r+1]}\in \Z^{|Y|}=\Z^{r+1}$ corresponds to a zero-sum subsequence $S=\prod_{i=1}^{r+1}g_i^{x_i}\in \mathcal B(G_0)$ with $\supp(S)\subset Y$ (with $\supp(S)\cap \supp(-S)=\emptyset$) and $\vp_{g_i}(S)=x_i$ precisely when $\mathbf x$ is in the kernel of the matrix $M$ and $x_i\geq 0$ for all $i \in [1,r+1]$. Also, the vector $\mathbf x\in \Z^{r+1}$ corresponds to a zero-sum subsequence $S=\prod_{i\in I^+}g_i^{x_i} \prod_{i\in I^-}(-g_i)^{-x_i}\in \mathcal B(G_0\cup -G_0)$ with $\supp^+(S)\subset Y\cup -Y$ precisely when $\mathbf x$ is in the kernel of the matrix $M$, $I^+\subset [1,r+1]$ denotes the subset of indices $i\in [1,r+1]$ with $x_i>0$, and $I^-\subset [1,r+1]$ denotes the subset of indices $i\in [1,r+1]$ with $x_i<0$. In the latter case, we have $S\in \mathcal B(G_0)$ precisely when $-Y^-\subset G_0$ for $Y^- = \{ y_i \in Y : i \in I^- \}$.

Consider the vector $\mathbf x=\frac{1}{\delta}(x_i)_{i\in [1,r+1]}\in \Z^{r+1}$ given by $$x_i=(-1)^i\det(g_1,\ldots,g_{i-1},g_{i+1},\ldots,g_{r+1})\quad\mbox{ for $i\in [1,r+1]$,}$$
where $$\delta=\gcd\{\det(g_1,\ldots,g_{i-1},g_{i+1},\ldots,g_{r+1}): \; i\in [1,r+1]\}.$$
Since $\rk(\la Y\ra)=r$, the $x_i$ cannot all be zero, and thus $\delta>0$ is also nonzero.
Since each $g_i\in \Z^r$ is an integer-valued vector, it is clear from the the above definition that $\mathbf x\in \Z^{r+1}$ with \be\label{whoosh}\gcd\Big\{\frac{x_i}{\delta}:\;i\in [1,r+1]\Big\}=1.\ee Moreover, since $X=\{g_1,\ldots,g_{x+1}\}$ is linearly dependent, $x_i=0$ for all $i\in [x+2,r+1]$.

We now show that $\mathbf x$ is in the kernel of the matrix $M$ whose columns are the vectors $g_i\in \Z^r$.  Let $g_i=(g_{i,j})_{j\in [1,r]}$ with $g_{i,j}$ the $j$-th entry of the column vector $g_i$. With $j\in [1,r]$ arbitrary, the $j$-th entry of $M\mathbf x\in \Z^r$ is
\be\label{jcoord}\frac{1}{\delta}\Sum{i=1}{r+1}g_{i,j}x_i=\frac{1}{\delta}\Sum{i=1}{r+1}(-1)^i
g_{i,j}\det(g_1,\ldots,g_{i-1},g_{i+1},\ldots,g_{r+1}).\ee
However, by the cofactor expansion formula for the determinant of a matrix, the right hand side of \eqref{jcoord} is equal (up to sign) to the product of $\frac{1}{\delta}$ and the determinant of the $(r+1)\times (r+1)$ matrix $M'$ formed from the $r\times (r+1)$ matrix $M$ by repeating the $j$-th row $(g_{i,j})_{i\in  [1,r]}$ of $M$ and then computing the cofactor expansion about this duplicate row. As $M'$ has two duplicate rows, its determinant is zero and thus \eqref{jcoord} is zero if $j\in [1,r]$. Since the $j$-th entry of $M\mathbf x\in \Z^r$ is equal to \eqref{jcoord}, this shows that every coordinate of $M\mathbf x$ is zero. Hence $M\mathbf x=0$ and $\mathbf x$ is in the kernel of $M$ as claimed.

\smallskip
Since $\mathbf x=\frac{1}{\delta}(x_i)_{i\in [1,r+1]}\in \Z^{r+1}$ is an integer-valued vector in the kernel of $M$, it follows (as was noted earlier in the proof) that $\mathbf x$ corresponds to a nontrivial (not all $x_i$ are zero) zero-sum sequence $S\in \mathcal B(G_0\cup -G_0)$ with $\supp^+(S)\subset X\cup -X$ ($x_i=0$ for $i\in [x+2,r+1]$ and $X=\{g_1,\ldots,g_{x+1}$\}). Moreover, $\supp(S)\cap\supp(-S)=\emptyset$.
Consequently, as $X\cup -X=\supp^+(U)$ with $U\in \mathcal A(G_0)$ elementary, $\supp^+(S)=X\cup -X$. From \eqref{whoosh} we see that $S\neq T^\ell$ for any $T\in \mathcal B(G_0\cup -G_0)$ and $\ell\geq 2$. Since any proper subset of $\supp(U)$ is linearly independent, it follows that there is no nontrivial zero-sum sequence $V\in \mathcal B(G_0\cup -G_0)$ with $\emptyset\neq \supp^+(V)\subsetneq \supp^+(U)$. Thus $U$ is an elementary atom not just over $G_0$, but also over $G_0\cup -G_0$. From Lemma \ref{prop-char}, $U$ must be the unique (up to sign) elementary atom over $G_0\cup -G_0$ with signed support $X \cup -X$, and all other elementary zero-sum sequences $T$ over $G_0\cup -G_0$ with $\supp^+(T) = X \cup -X$ (for which $\supp(T)\cap\supp(-T)=\emptyset$) must be a power of either $U$ or $-U$. Applying this conclusion to $T=S$, we find that either $S=U^\ell$ or $-S=U^\ell$ for some $\ell\geq 1$. By swapping the sign of each $x_i$ in the definition of $\mathbf x=(x_i)_{i\in [1,r]}$ (thus replacing  $\mathbf x$ by $-\mathbf x$) if need be, we may without loss of generality assume the former: $S=U^\ell$ and, in particular, $S\in \mathcal B(G_0)$. Since $S\neq T^\ell$ for any $T\in \mathcal B(G_0\cup -G_0)$ and $\ell\geq 2$ as observed above, it follows that $\ell=1$ and $S=U$.

Now \begin{equation*}|U|=|S|=\frac{1}{\delta}\Sum{i=1}{r}|x_i|=\frac{1}{\delta}\Sum{i=1}{r+1}
|\det(g_1,\ldots,g_{i-1},g_{i+1},\ldots,g_{r+1})|=\Delta(g_1,\ldots,g_{r+1}).\end{equation*}
Since $U\in \mathcal A(G_0)$ was an arbitrary elementary atom with $\supp(U)\subset \{g_1,\ldots,g_{r+1}\}$, we have
\ber\label{bound-tight}\mathsf D^{\mathsf{elm}}(G_0)&\leq&\sup\{\Delta(g_1,\ldots,g_{r+1}):\;g_1,\ldots,g_{r+1}\in G_0 \quad \und \quad \mathsf D(\{g_1,\ldots,g_{r+1}\})\geq 3\}\\
&\leq& \sup\{\Delta(g_1,\ldots,g_{r+1}):\;g_1,\ldots,g_{r+1}\in G_0\}. \label{thesymmetricboundII}
\eer

\smallskip
Let $g_1,\ldots,g_{r+1}\in G_0$ be vectors with $\mathsf D(\{g_1,\ldots,g_{r+1}\})\geq  3$ and such that $\Delta(g_1,\ldots,g_{r+1})>0$. Such vectors exist by \eqref{stiy} and \eqref{bound-tight}. The condition $\Delta(g_1,\ldots,g_{r+1})>0$ implies that $\rk(\la g_1,\ldots,g_{r+1}\ra)=r$. Since $\mathsf D(\{g_1,\ldots,g_{r+1}\})\geq  3$ and by Lemma \ref{lem-D3-existance}, there exists an elementary atom $U\in \mathcal A(\{g_1,\ldots,g_{r+1}\})$, such that, without loss of generality, $\supp(U)=\{g_1,\ldots,g_{x+1}\}$, where $x\leq r$. Now, repeating the above arguments using this particular elementary atom $U$, we find that $\mathsf D^{\mathsf{elm}}(G_0) \ge |U|=\Delta(g_1,\ldots,g_{r+1})$.
Taking the supremum over $\Delta(g_1,\ldots,g_{r+1})$ for all choices of $g_1,\ldots,g_{r+1}\in G_0$ with $\mathsf D(\{g_1,\ldots,g_{r+1}\})\geq 3$ and $\Delta(g_1,\ldots,g_{r+1})>0$, we see that equality holds in \eqref{bound-tight}.

\medskip
Next suppose that $-G_0=G_0$. To complete the proof we need to show that equality holds in \eqref{thesymmetricboundII}. We may assume $\mathsf D^{\mathsf{elm}}(G_0) < \infty$ as the claim is trivially true otherwise. Let $g_1,\ldots,g_{r+1}\in G_0$ be vectors that obtain the maximum in \eqref{bound-tight}. If $\rk(\la g_1,\ldots,g_{r+1}\ra)<r$, then $\Delta(g_1,\ldots,g_{r+1})=0$ and, by the lower bound $\mathsf D^{\mathsf{elm}}(G_0)\geq 3$ from \eqref{stiy}, the $g_1,\ldots,g_{r+1}\in G_0$ cannot maximize \eqref{thesymmetricboundII}. Thus we may assume that $\rk(\la g_1,\ldots,g_{r+1}\ra)=r$.
Note that replacing any $g_i$ with $-g_i$ does not alter the value of $\Delta(g_1,\ldots,g_{r+1})$ and that the hypothesis $G_0=-G_0$ ensures that $-g_i\in G_0$. Thus, to show equality in \eqref{thesymmetricboundII}, it suffices to show that $\mathsf D(\{\epsilon_1g_1,\ldots,\epsilon_{r+1}g_{r+1}\})\geq  3$ for some choice of $\epsilon_i\in \{1,\,-1\}$ which, by Lemma \ref{lem-D3-existance}, is equivalent to the existence of an elementary atom $U\in \mathcal A(\{\epsilon_1g_1,\ldots,\epsilon_{r+1}g_{r+1}\})$ for some choice of $\epsilon_i\in \{1,\,-1\}$ which in turn is equivalent to the existence of an elementary atom $U\in \mathcal A(G_0)$ with $\supp^+(U)\subset \{g_1,\ldots,g_{r+1}\}\cup -\{g_1,\ldots,g_{r+1}\}$. To show the later we will use Lemma \ref{lem-elementary-supp}.

Let $Y\subset \{g_1,\ldots,g_{r+1}\}$ be those $g_i\in \{g_1,\ldots,g_{r+1}\}$ such that $\det(g_1,\ldots,g_{i-1},g_{i+1},\ldots,g_{r+1})\neq 0$. It follows that $\{g_1,\ldots, g_{r+1}\}\setminus \{g_i\}$ is linearly independent for any $g_i\in Y$. Thus, any subset of $\{g_1,\ldots, g_{r+1}\}\setminus \{g_i\}$, including $Y\setminus \{g_i\}$, must also be linearly independent. This shows that all proper subsets of $Y$ are linearly independent.

Suppose the elements of $Y$ are linearly independent. Then clearly $|Y|\leq r$ and thus $Y$ must be a proper subset of $\{g_1,\ldots,g_{r+1}\}$. However, since $\rk(\la g_1,\ldots,g_{r+1}\ra)=r$, it follows that we can complete $Y$ to some full rank set $Y'\subset \{g_1,\ldots,g_{r+1}\}$ with $Y\subset Y'$. Then $\det(g_1,\ldots,g_{i-1},g_{i+1},\ldots,g_{r+1})\neq 0$ for the unique $g_i\in \{g_1,\ldots,g_{r+1}\}\setminus Y'$. By the definition of $Y$, this forces $g_i\in Y$, contradicting that $g_i\notin Y'$ with $Y\subset Y'$. Thus we conclude that the elements of $Y$ are linearly dependent.

In view of the conclusions of the previous two paragraphs, along with the hypothesis $G_0=-G_0$, we can now apply Lemma \ref{lem-elementary-supp} and conclude that there exists an elementary zero-sum sequence $S\in \mathcal B(G_0)$ with $\supp^+(S)=Y\subset \{g_1,\ldots,g_{r+1}\}\cup -\{g_1,\ldots,g_{r+1}\}$. By Lemma \ref{prop-char}, this ensures that there exists an elementary atom $U\in \mathcal A(G_0)$ with $\supp^+(U)=Y\subset \{g_1,\ldots,g_{r+1}\}\cup -\{g_1,\ldots,g_{r+1}\}$ which completes the proof.
\end{proof}

\smallskip
\begin{corollary}\label{cor-dav-upper}
Let $r \in \N$, let $(\mathsf e_1,\ldots, \mathsf e_{r+1})$  denote the standard basis of $\Z^{r+1}$, and
let $G_0\subset G =\la\mathsf e_1,\ldots,\mathsf e_r\ra$ be a nonempty subset with $\rk(\la G_0\ra)=r$ and
$\mathsf D(G_0)\geq 3$. Furthermore, let $\wtilde G_0,\,\wtilde G_1 \subset\Z^{r+1}$
be the subsets given by
\begin{eqnarray*}
\wtilde G_{0}&=&\{g+\mathsf e_{r+1}:g\in G_0\}\cup G_0\quad\und\nn\\
\wtilde G_{1}&=&\{g+\mathsf e_{r+1}:g\in G_0\}\cup \{g-\mathsf e_{r+1}:g\in G_0\},
\end{eqnarray*}
and let $\mathcal M(\wtilde G_i)$ be all those nonsingular  $(r+1)\times (r+1)$ matrices
with columns $\tilde g_1,\ldots,\tilde g_{r+1}\in \wtilde G_i\subset \Z^{r+1}$, for $i \in [0,1]$. Also, let $d_{r+1}(\tilde M)$ denote the largest elementary divisor of the matrix $\tilde M$.
\begin{enumerate}
\item \label{bound1} $\mathsf D^{\mathsf{elm}}(G_0)\leq2\sup\{d_{r+1}(\tilde M):\;\tilde M\in \mathcal M(\wtilde G_0)\}\leq2\sup\{|\det(\tilde M)|:\;\tilde M\in \mathcal M(\wtilde G_0)\}$ and
\smallskip
\item \label{bound2} $\mathsf D^{\mathsf{elm}}(G_0)\leq\sup\{d_{r+1}(\tilde M):\;\tilde M\in \mathcal M(\wtilde G_1)\}\leq \sup\{|\det(\tilde M)|:\;\tilde M\in \mathcal M(\wtilde G_1)\}$.
\end{enumerate}
\end{corollary}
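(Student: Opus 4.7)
The plan is to combine Theorem \ref{thm-diaconis}, which expresses $\mathsf D^{\mathsf{elm}}(G_0)$ as a supremum of determinantal ratios $\Delta(g_1,\ldots,g_{r+1})$, with a lifting trick that turns each such ratio into the largest elementary divisor of a suitable $(r+1)\times(r+1)$ matrix whose columns lie in $\wtilde G_i$. The invariant $\Delta$ already has the ``right'' numerator and denominator to match the Smith-normal-form formula \eqref{invariant-eq}, so the job is really just to realize the numerator (up to a constant) as $|\det(\tilde M)|$ for a matrix in $\mathcal M(\wtilde G_i)$.

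Given $g_1,\ldots,g_{r+1}\in G_0$ and signs $\epsilon_1,\ldots,\epsilon_{r+1}\in\Z$, set $\tilde g_i:=g_i+\epsilon_i \mathsf e_{r+1}\in\Z^{r+1}$ and let $\tilde M$ be the $(r+1)\times(r+1)$ matrix with columns $\tilde g_1,\ldots,\tilde g_{r+1}$. Writing $M_i$ for the $r\times r$ matrix with columns $g_1,\ldots,g_{i-1},g_{i+1},\ldots,g_{r+1}$, cofactor expansion along the last row $(\epsilon_1,\ldots,\epsilon_{r+1})$ of $\tilde M$ yields
\[
\det(\tilde M)=\sum_{i=1}^{r+1}(-1)^{r+1+i}\epsilon_i\det(M_i).
\]
For part (2), where $\wtilde G_1$ requires $\epsilon_i\in\{+1,-1\}$, we choose the signs so that every summand $(-1)^{r+1+i}\epsilon_i\det(M_i)$ is nonnegative, giving $|\det(\tilde M)|=\sum_{i=1}^{r+1}|\det(M_i)|$. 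For part (1), where $\wtilde G_0$ only permits $\epsilon_i\in\{0,1\}$ (the case $\epsilon_i=0$ corresponds to $\tilde g_i=g_i\in G_0\subset \wtilde G_0$), we partition $[1,r+1]$ according to the sign of $(-1)^{r+1+i}\det(M_i)$ and set $\epsilon_i=1$ only on the half whose contribution has the larger absolute value; by pigeonhole this yields $|\det(\tilde M)|\geq\tfrac12\sum_{i=1}^{r+1}|\det(M_i)|$.

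To pass from $|\det(\tilde M)|$ to the largest elementary divisor $d_{r+1}(\tilde M)$, observe that each $M_i$ is itself an $r\times r$ submatrix of $\tilde M$ (delete the last row and column $i$). Hence the gcd $D$ of absolute values of determinants of all $r\times r$ submatrices of $\tilde M$ divides $d:=\gcd\{|\det(M_i)|:i\in[1,r+1]\}$, and by \eqref{invariant-eq}
\[
d_{r+1}(\tilde M)=\frac{|\det(\tilde M)|}{D}\;\geq\;\frac{|\det(\tilde M)|}{d}.
\]
Combining with the determinantal estimates above, this gives $\Delta(g_1,\ldots,g_{r+1})\leq d_{r+1}(\tilde M)$ in case (2) and $\Delta(g_1,\ldots,g_{r+1})\leq 2\,d_{r+1}(\tilde M)$ in case (1). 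When $\Delta(g_1,\ldots,g_{r+1})>0$ the construction forces $\det(\tilde M)\neq 0$, so $\tilde M\in\mathcal M(\wtilde G_i)$; when $\Delta(g_1,\ldots,g_{r+1})=0$ the desired inequality is vacuous. Taking the supremum over tuples $g_1,\ldots,g_{r+1}\in G_0$ with $\mathsf D(\{g_1,\ldots,g_{r+1}\})\geq 3$, invoking Theorem \ref{thm-diaconis}, and using the trivial estimate $d_{r+1}(\tilde M)\leq|\det(\tilde M)|$ for the outer bound yields both parts. The only real subtlety is the sign bookkeeping in the cofactor expansion, together with checking that the lifted columns actually belong to $\wtilde G_0$ or $\wtilde G_1$ as prescribed; no deeper obstacle arises.
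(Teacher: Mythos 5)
Your proposal is correct and follows essentially the same route as the paper: lift each tuple $g_1,\ldots,g_{r+1}$ to a matrix $\tilde M$ by appending a last row of $0/1$ (resp.\ $\pm 1$) entries, use the cofactor expansion along that row together with the pigeonhole/complementary-choice trick to control $|\det(\tilde M)|$, note that each $M_i$ is an $r\times r$ submatrix of $\tilde M$ so the denominator in $\Delta$ dominates the denominator in \eqref{invariant-eq}, and then take the supremum via Theorem~\ref{thm-diaconis}. The sign bookkeeping, the divisibility $D\mid d$, and the handling of the degenerate case $\Delta=0$ are all handled as in the paper's proof.
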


\begin{proof}
 By Lemma \ref{lem-D3-existance} and Theorem \ref{thm-diaconis} we know that \begin{equation*}3 \le \mathsf D^{\mathsf{elm}}(G_0)\leq \sup\{\Delta(g_1,\ldots,g_{r+1}):\;g_1,\ldots,g_{r+1}\in G_0\}.
\end{equation*}
It follows that there exist $g_1,\ldots,g_{r+1}\in G_0\subset \Z^r$ with $\Delta(g_1,\ldots,g_{r+1}) > 0$ and that the supremum on the right hand side is necessarily obtained for such a choice of $g_1,\ldots,g_{r+1}$.
Let $g_1,\ldots,g_{r+1}\in G_0$ be such that $\Delta(g_1,\ldots,g_{r+1}) > 0$. Then $\rk(\la g_1,\ldots,g_{r+1}\ra)=r$.

\smallskip
For each $i\in [1,r+1]$, let $\tilde g_i=g_i\pm \mathsf e_{r+1}$, where an appropriate choice for the sign of $\mathsf e_{r+1}$ will be determined shortly, and let $\tilde M\in\mathcal M(\wtilde G_1)$ be the $(r+1)\times (r+1)$ integer matrix with columns $\tilde g_1,\ldots,\tilde g_{r+1}\in \tilde G_1\subset \Z^{r+1}$. Let \begin{align*}&\delta=\gcd\{|\det(g_1,\ldots,g_{i-1},g_{i+1},\ldots,g_{r+1})|:\; i\in [1,r+1]\}\quad\und
 \\ &\tilde \delta=\gcd\{|\det(M')|:\;M' \mbox{ is a $r\times r$ sub-matrix of $\tilde M$}\}.\end{align*} Then $\tilde \delta\mid \delta$. By using the cofactor expansion formula for the determinant and expanding along the final row of $\tilde M$, we see that by choosing the signs in each $\tilde g_i$ appropriately, $\frac{1}{\delta}\det(\tilde M)= \Delta(g_1,\ldots,g_{r+1})$. Now, since $\tilde \delta\mid \delta$ and by
\eqref{invariant-eq}, we have $$\Delta(g_1,\ldots,g_{r+1})= \frac{1}{\delta}\det(\tilde M)\leq \frac{1}{\tilde \delta}\det(\tilde M)=d_{r+1}(\tilde M).$$
Since the choice of $g_1,\ldots,g_{r+1}\in G_0$ was arbitrary, subject to the restriction $\Delta(g_1,\ldots,g_{r+1}) > 0$, the bound from Theorem \ref{thm-diaconis} establishes \ref*{bound2}.

\smallskip
For each $i\in [1,r+1]$, redefine $\tilde g_i$ as either $\tilde g_i=g_i+ \mathsf e_{r+1}$ or $\tilde g_i=g_i$, where the choice of coefficient for $\mathsf e_{r+1}$ will be determined shortly, and let $\tilde M\in\mathcal M(\wtilde G_0)$ be the $(r+1)\times (r+1)$ integer matrix with columns $\tilde g_1,\ldots,\tilde g_{r+1}\in \tilde G_0\subset \Z^{r+1}$. Let \begin{align*}&\delta=\gcd\{|\det(g_1,\ldots,g_{i-1},g_{i+1},\ldots,g_{r+1})|:\; i\in [1,r+1]\}\quad\und
 \\ &\tilde \delta=\gcd\{|\det(M')|:\;M' \mbox{ is a $r\times r$ sub-matrix of $\tilde M$}\}.\end{align*} Then $\tilde \delta\mid \delta$. By again using the cofactor expansion formula for the determinant and expanding along the final row of $\tilde M$, we see that by choosing the coefficients for $\mathsf e_{r+1}$ in each $\tilde g_i$ appropriately, we can achieve $$\det(\tilde M)=\Summ{i\in I}|\det(g_1,\ldots,g_{i-1},g_{i+1},\ldots,g_{r+1})|$$ for some subset $I\subset [1,r]$. By using the exact opposite choices for the $\tilde g_i$, we can instead achieve $$\det(\tilde M)=-\Summ{i\in [1,r+1]\setminus I}|\det(g_1,\ldots,g_{i-1},g_{i+1},\ldots,g_{r+1})|.$$ Between these two options, take the one where $|\det(\tilde M)|$ is larger. Then $$2|\det(\tilde M)|\geq \Sum{i=1}{r+1}|\det(g_1,\ldots,g_{i-1},g_{i+1},\ldots,g_{r+1})|,$$ in which case \eqref{invariant-eq} and $\tilde \delta\mid \delta$ give $$\Delta(g_1,\ldots,g_{r+1})\leq \frac{2}{\delta}|\det(\tilde M)|\leq \frac{2}{\tilde \delta}|\det(\tilde M)|=2d_{r+1}(\tilde M),$$
 establishing  \ref*{bound1}.
\end{proof}

\medskip
\noindent
{\bf The special case $G_0 = G_r^{+}\cup -G_r^{+}$.} Our goal for the remainder of this section is to apply the machinery above to the case when $$G_0=G_r^{+}\cup -G_r^{+}$$ where $G_r^{+}$ denotes the nonzero vertices of the $r$-dimensional hypercube as defined in Section \ref{2}. Specifically, we wish to obtain upper and lower bounds for $\mathsf D(G_r^{+}\cup -G_r^{+})$ which we will apply to the study of invariants of monoids of modules in Section \ref{5}.

\medskip
If $r=1$, then clearly $\mathsf D(G_0) = 2$. Thus we suppose for the remainder of this section that $r > 1$ in which case $\mathsf D(G_0) \ge 3$.
Note that $\Delta(g_1,\ldots,g_{r+1})$ is unaffected if any $g_i$ is replaced by $-g_i$. Thus we need only consider matrices with columns from $G_r^+$ when applying Theorem \ref{thm-diaconis} and Corollary \ref{cor-dav-upper}.
By Corollary \ref{cor-dav-upper} we see that $\mathsf D^{\mathsf{elm}}(G_0)$ is bounded from above by twice the maximal determinant of a $(r+1)\times (r+1)$ \ $(0,1)$-matrix. It is well-known that this value is $\frac{1}{2^{r+1}}$ times the maximal value of a $(r+2)\times (r+2)$ \ $(1,-1)$-matrix and this value, in turn, is equal to the maximal value of a $(r+2) \times (r+2)$ $(-1,0,1)$-matrix. Moreover, this maximal is bounded by Hadamard's bound and, together with Theorem \ref{thm-elm-atom-bound}, we obtain the following upper bound

$$\mathsf D(G_0)\leq (|G_0^+|-r)\mathsf D^{\mathsf{elm}}(G_0)\leq \frac{(2^r-r-1)}{2^{r}}(r+2)^{\frac{r+2}{2}}<(r+2)^{\frac{r+2}{2}}.$$

\medskip
We now construct a lower bound for $\mathsf D(G_0)$.
Suppose we have an $(r+1)\times (r+1)$ \ $(0,1)$-matrix $M$ whose determinant is a large prime $p$.
Then we must have $d_{r+1}=p$ and $d_r=1$ where the $d_1\mid\cdots\mid d_{r+1}$ are the elementary divisors of $M$. Therefore, by \eqref{invariant-eq}, we see that the greatest common divisor of the determinants of $r\times r$ sub-matrices of $M$ must be $1$. On the other hand, if we delete any row of $M$, we obtain, by applying the cofactor expansion formula to $\det(M)$ and expanding along the deleted, row $r+1$ vectors $g_1,\ldots,g_{r+1}\in G_0$ where $\gcd\{|\det(g_1,\ldots,g_{i-1},g_{i+1},\ldots,g_{r+1})|:\; i\in [1,r+1]\}$ divides $\det(M)=p$. As $p$ is prime, we can conclude that $\gcd\{\det(g_1,\ldots,g_{i-1},g_{i+1},\ldots,g_{r+1}):\; i\in [1,r+1]\}$ is either $1$ or $p$. Moreover, since the greatest common divisor of the determinants of $r\times r$ sub-matrices is $1$, we see that we can achieve $1$ rather than $p$ by choosing an appropriate row of $M$ to delete. Then, in this case, $\Delta(g_1,\ldots,g_{r+1})=d_{r+1}=|\det(M)|=p$ and we obtain the lower bound
$$\max\{|\det(M)|:\; \mbox{$M$ is a $(r+1)\times (r+1)$ \ $(0,1)$-matrix with $|\det(M)|$ prime}\}\leq \mathsf D^{\mathsf{elm}}(G_0)\leq \mathsf D(G_0).$$

\medskip
In general, the possible values of $|\det(M)|$ for an arbitrary $(0,1)$-matrix are not known, though this question is of great interest to many researchers (see \cite{Or12} for known results on the spectrum of the determinant). However, computational evidence obtained for small values of $r$ has led many to observe that there is (at least for $r$ small) a constant $C\cong \frac12$ and a large consecutive interval $[0,C\,2^{-r-1}(r+2)^{(r+2)/2}]$ of obtainable values for $|\det(M)|$, leading some to conjecture that the set of values of $\det(M)$ is dense in a interval whose length is a fraction of the maximal possible value (see \cite{Or10}). For any interval of obtainable values $[0,n]$, Bertrand's postulate ensures that a prime of size at least $\frac12 n$ will occur in that interval. Thus, if the intuitions gathered from examining small values of $r$ hold true for larger values in a very strong sense, we would expect a lower bound for $\mathsf D(G_0)$ of the form $$\Big(\frac{r+2}{C}\Big)^{(r+2)/2}\lesssim \mathsf D^{\mathsf{elm}}(G_0)\leq \mathsf D(G_0),$$ where $C\geq 1$ is some constant. We note that such a lower bound would very nearly match the upper bound in order of magnitude.

\medskip
Constructively, the best lower bounds we have been able to achieve involve the Fibonacci numbers.
We now present a construction giving a lower bound on $\mathsf D(G_0)$ regardless of concerns about the possible values of $\det(M)$. We note that for small values of $r$, examples of $(0,1)$-matrices with large prime determinant are known and can thus be used to improve upon this bound. For $r \in \mathbb N_0$, we denote by $\mathsf F_r$ the $r$th {\it Fibonacci number}. That is, $\mathsf F_0 = 0$, $\mathsf F_1 = 1$, and $\mathsf F_r = \mathsf F_{r-1}+\mathsf F_{r-2}$ for all $r \ge 2$.

\smallskip
\begin{proposition} \label{fib-bound}
Let $r \in \N$,  let $(\mathsf e_1,\ldots,\mathsf e_r)$ denote the standard basis of $\Z^r$, and set $H_r=\la \mathsf e_1+\cdots+ \mathsf e_r\ra$.
Then there exists a sequence $S_r\in \Fc(G_r^+)$  with \begin{equation*}\sigma(S_r)\in H_r\quad\mbox{ and }\quad \Sigma_{\leq |S_r|-1}(S_r)\cap H_r=\emptyset\end{equation*} such that \begin{eqnarray*} &&|\supp(S_r)|=r \; \mbox{ with \;  $\supp(S_r)$\; spanning $\Q^r$,}\\&&|S_r|=\mathsf F_{r+1} \quad\und\quad \sigma(S_r)=\mathsf F_{r}\mathsf e_1+\cdots+\mathsf F_{r}\mathsf e_r.\end{eqnarray*}
\end{proposition}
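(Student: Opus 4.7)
The plan is to construct $S_r$ explicitly by induction on $r$, exploiting the Fibonacci recurrence $\mathsf F_{r+1} = \mathsf F_r + \mathsf F_{r-1}$. For the base $r=1$, take $S_1 = \mathsf e_1$; all five conditions hold trivially. For the inductive step, assume $S_{r-1}$ has been constructed with $\supp(S_{r-1}) = \{w_1, \ldots, w_{r-1}\} \subset G_{r-1}^+$, and define $\phi_r : G_{r-1}^+ \to G_r^+$ by sending $w = (w_1, \ldots, w_{r-1})$ to $(1 - w_1, \ldots, 1 - w_{r-1}, 1)$ (componentwise complement with a $1$ appended). Then set
\[
S_r = \bigl(\mathsf e_1 + \cdots + \mathsf e_{r-1}\bigr)^{\mathsf F_{r-1}} \cdot \phi_r(S_{r-1}),
\]
where $\phi_r$ is extended termwise to the sequence.

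The length identity $|S_r| = \mathsf F_{r-1} + |S_{r-1}| = \mathsf F_{r-1} + \mathsf F_r = \mathsf F_{r+1}$ is immediate. For the sum, a direct calculation yields $\sigma(\phi_r(S_{r-1})) = \mathsf F_r (\mathsf e_1 + \cdots + \mathsf e_{r-1}) - \sigma(S_{r-1}) + \mathsf F_r \mathsf e_r = \mathsf F_{r-2}(\mathsf e_1 + \cdots + \mathsf e_{r-1}) + \mathsf F_r \mathsf e_r$, using the inductive formula $\sigma(S_{r-1}) = \mathsf F_{r-1}(\mathsf e_1 + \cdots + \mathsf e_{r-1})$; adding $\mathsf F_{r-1}(\mathsf e_1 + \cdots + \mathsf e_{r-1})$ from the first factor gives the required $\sigma(S_r) = \mathsf F_r(\mathsf e_1 + \cdots + \mathsf e_r)$. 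The support $\{\mathsf e_1 + \cdots + \mathsf e_{r-1}\} \cup \phi_r(\supp(S_{r-1}))$ has exactly $r$ distinct elements, since $\phi_r$ is injective and its values all have $r$-th coordinate $1$ while $\mathsf e_1 + \cdots + \mathsf e_{r-1}$ has $r$-th coordinate $0$.

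The main technical step is showing that $\supp(S_r)$ spans $\Q^r$. Suppose $c_0(\mathsf e_1 + \cdots + \mathsf e_{r-1}) + \sum_{i=1}^{r-1} c_i\, \phi_r(w_i) = 0$ in $\Q^r$. The $r$-th coordinate gives $\sum_i c_i = 0$, and the first $r-1$ coordinates then simplify to $\sum_i c_i w_i = c_0 (\mathsf e_1 + \cdots + \mathsf e_{r-1})$. By the inductive hypothesis $\{w_i\}$ is a $\Q$-basis of $\Q^{r-1}$, and $\mathsf e_1 + \cdots + \mathsf e_{r-1} = \sum_i \alpha_i w_i$ with $\alpha_i = m_i^{(r-1)}/\mathsf F_{r-1}$ where $m_i^{(r-1)}$ denotes the multiplicity of $w_i$ in $S_{r-1}$. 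Uniqueness of this expansion forces $c_i = c_0 \alpha_i$; substituting back, $0 = \sum_i c_i = c_0 \sum_i \alpha_i$, and since $\sum_i \alpha_i = |S_{r-1}|/\mathsf F_{r-1} = \mathsf F_r/\mathsf F_{r-1} \neq 0$, we conclude $c_0 = 0$ and then all $c_i = 0$.

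Finally, for the minimality condition, since $\supp(S_r) = \{v_1, \ldots, v_r\}$ is a $\Q$-basis with $\sum_i m_i^{(r)} v_i = \mathsf F_r(\mathsf e_1 + \cdots + \mathsf e_r)$, any subsum yielding $n(\mathsf e_1 + \cdots + \mathsf e_r) \in H_r$ is uniquely expressed with coefficients $k_i = n\, m_i^{(r)}/\mathsf F_r$. The recursion carries forward a multiplicity equal to $1$ (that of $\phi_r \circ \cdots \circ \phi_2(\mathsf e_1)$, originating from the base case), so $\gcd(m_1^{(r)}, \ldots, m_r^{(r)}, \mathsf F_r) = 1$, and the only integers $n \in [0, \mathsf F_r]$ yielding $k_i \in [0, m_i^{(r)}] \cap \Z$ for every $i$ are $n = 0$ (empty subsum) and $n = \mathsf F_r$ (full subsum), both lying outside $\Sigma_{\leq |S_r|-1}(S_r)$. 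The chief obstacle throughout is the spanning verification in the third paragraph, where the nonvanishing of the Fibonacci ratio $\mathsf F_r/\mathsf F_{r-1}$ is essential.
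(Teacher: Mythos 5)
Your construction is identical to the paper's. The paper defines $S_r = \varphi(S_{r-1}\,\mathsf e_r^{\mathsf F_{r-1}})$ where $\varphi\colon \Z^r\to\Z^r$, $\varphi(x) = (\mathsf e_1+\cdots+\mathsf e_r)-x$; since $\varphi(\mathsf e_r)=\mathsf e_1+\cdots+\mathsf e_{r-1}$ and $\varphi$ restricted to $G_{r-1}^+$ is precisely your $\phi_r$, the two recursions produce the same sequence, and the length and sum computations also agree. Where you diverge is in the verification of the last two properties, and in both cases your route is more explicit. For spanning, the paper asserts that since $\varphi$ is an affine isomorphism, $\supp(\varphi(S'_r))$ spans $\Q^r$ if and only if $\supp(S'_r)$ does; this inference is not valid for arbitrary affine maps (for instance $\{(1,0),(1,1)\}$ spans $\Q^2$ but its image under $x\mapsto (1,1)-x$ is $\{(0,1),(0,0)\}$, which does not), so your direct linear-independence calculation, which isolates the essential nonvanishing of $\mathsf F_r/\mathsf F_{r-1}$, actually closes a small gap. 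For the minimality condition $\Sigma_{\le|S_r|-1}(S_r)\cap H_r=\emptyset$, the paper transports the problem back through $\varphi$ to $S'_r=S_{r-1}\,\mathsf e_r^{\mathsf F_{r-1}}$, splits a hypothetical zero-sum-into-$H_r$ subsequence $T\mid S'_r$ into its $S_{r-1}$-part and its $\mathsf e_r$-part, and invokes the inductive minimality of $S_{r-1}$ on the first $r-1$ coordinates. You instead exploit the basis structure of $\supp(S_r)$: a subsequence $T$ with $\sigma(T)=n(\mathsf e_1+\cdots+\mathsf e_r)$ has uniquely determined multiplicities $k_i=nm_i^{(r)}/\mathsf F_r$, and the extra inductive invariant that one support element carries multiplicity $1$ forces $\mathsf F_r\mid n$, hence $n\in\{0,\mathsf F_r\}$. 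This is self-contained at stage $r$ and bypasses the inductive minimality hypothesis, at the cost of carrying the additional multiplicity-one invariant through the recursion; that invariant does need its own short justification (the fresh term $\mathsf e_1+\cdots+\mathsf e_{r-1}$ has vanishing $r$-th coordinate whereas all $\phi_r$-images have $r$-th coordinate $1$, so the unit multiplicity is preserved), which you only sketch but which is indeed correct. Both approaches are sound; yours trades the paper's reliance on the inductive minimality hypothesis for a cleaner linear-algebraic argument.
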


\begin{proof}
The sequence $S_1=\mathsf e_1$ is  easily seen to satisfy the conditions of the theorem. We proceed recursively to define $S_r$ for $r\geq 2$, assuming that $$S_{r-1}=g_1\bdot\ldots\bdot g_{\mathsf F_r}\in \Fc(G_{r-1}^+)$$ has already been constructed so as to satisfy the conclusions of the theorem.

Let $$S'_r= S_{r-1} \mathsf e_r^{\mathsf F_{r-1}}\in \Fc(G_r^+)$$ and let $$S_r=\varphi(S'_r)\in \Fc(\Z^r),$$ where $\varphi:\Z^r\rightarrow \Z^r$ is the map defined by $x\mapsto -(x-(\mathsf e_1+\cdots+\mathsf e_r))$ and extends to a affine linear isomorphism of $\Q^r$. The map $\varphi$ acts on an element $x\in G_r^+$ simply by exchanging each $0$ for a $1$ and each $1$ for a $0$. Therefore $\varphi(x)\in G_r^+$ for each $x\in G_r^+\setminus \{\mathsf e_1+\cdots+\mathsf e_r\}$. As a result, since $\mathsf e_1+\cdots+\mathsf e_r\notin \supp(S'_r)$ (as $\supp(S_{r-1})\subset G_{r-1}^+$ and $r\geq 2$), we see that $$S_r\in \Fc(G_r^+) \quad \mbox{ with \; $|S_r|=|S'_r|$ \; and \; $|\supp(S_r)|=|\supp(S'_r)|$}.$$

Observe that $$|S_r|=|S'_r|=|S_{r-1}|+\mathsf F_{r-1}=\mathsf F_r+\mathsf F_{r-1}=\mathsf F_{r+1}$$ and that

\ber\nn\sigma(S_r)&=&\sigma(\varphi(S'_r))=
 |S_r|\mathsf e_1+\cdots+|S_r|\mathsf e_r-\sigma(S'_r)\\\nn&=&
 \mathsf F_{r+1}\mathsf e_1+\cdots+\mathsf F_{r+1}\mathsf e_r-(\mathsf F_{r-1}\mathsf e_1+\cdots +\mathsf F_{r-1}\mathsf e_{r})\\\nn&=&
 \mathsf F_r\mathsf e_1+\cdots+\mathsf F_r\mathsf e_r\in H_r.\eer
 Moreover, since $\varphi:\Q^r\rightarrow \Q^r$ is a affine linear isomorphism, we have that $$|\supp(S_r)|=|\supp(\varphi(S'_r))|=|\supp(S'_r)|=|\supp(S_{r-1})|+1=r.$$ Since $\varphi:\Q^r\rightarrow \Q^r$ is a affine linear isomorphism, to show that $\supp(S_r)=\supp(\varphi(S'_r))$ spans $\Q^r$ it suffices to show that $\supp(S'_r)$ spans $\Q^r$. But this is clear since $\supp(S'_r)=\supp(S_{r-1})\cup \{\mathsf e_r\}$ with $\supp(S_{r-1})$ spanning $\Q \mathsf e_1+\cdots + \Q \mathsf e_{r-1}$ by hypothesis. It remains to show that $\Sigma_{\leq |S_r|-1}(S_r)\cap H_r=\emptyset$. Since $\sigma(\varphi(T))\in H_r$ if and only if $\sigma(T)\in H_r$ for any sequence $T\in \Fc(\Z^r)$, we see that in order to show $\Sigma_{\leq |S_r|-1}(S_r)\cap H_r=\emptyset$, it suffices to show that $\Sigma_{\leq |S'_r|-1}(S'_r)\cap H_r=\emptyset$.

Suppose that $T\mid S_r$ is a nontrivial subsequence with $\sigma(T)\in H_r$, that is, the coordinates of each entry of $T$ are equal. Then the coordinates of the first $r-1$ entries are equal. However, since $S'_r=S_{r-1} \mathsf e_r^{\mathsf F_{r-1}}$ and by the hypothesis that $\Sigma_{\leq |S_{r-1}|-1}(S_{r-1})\cap H_{r-1}=\emptyset$, this is only possible if either $T\mid \mathsf e_r^{\mathsf F_{r-1}}$ or $S_{r-1}\mid T$. In the former case, since $r\geq 2$, it is clear that $\sigma(T)\notin H$. In the latter case, since $\sigma(S_{r-1})=\mathsf F_{r-1}\mathsf e_1+\cdots +\mathsf F_{r-1}\mathsf e_{r-1}$ and $S'_r=S_{r-1} \mathsf e_r^{\mathsf F_{r-1}}$, $\sigma(T)\in H$ only if $T=S'_r$. Thus $\Sigma_{\leq |S'_r|-1}(S'_r)\cap H_r=\emptyset$ follows, completing the proof.
\end{proof}

\smallskip
\begin{theorem} \label{4.2}
Let $r \in \N_{\ge 2}$, let $(\mathsf e_1, \ldots, \mathsf e_r)$ denote the standard basis of $\Z^r$, and let $G_0 = G_r^{+} \cup -G_r^{+}$.
Then $$\mathsf F_{r+2} \le \mathsf D(G_0 ) \leq \frac{(2^r-r-1)}{2^{r}}(r+2)^{\frac{r+2}{2}} \leq (r+2)^{\frac{r+2}{2}}.$$
\end{theorem}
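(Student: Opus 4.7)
My plan is to handle the two bounds separately, treating the upper bound as essentially routine and focusing the work on the lower bound construction.

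For the upper bound, I would simply assemble the calculation already prepared in the discussion preceding the theorem. Since $|G_0^+| = |G_r^+| = 2^r - 1$ and $\rk(\langle G_0\rangle) = r$, Theorem \ref{thm-elm-atom-bound} yields $\mathsf D(G_0) \leq (2^r - 1 - r)\mathsf D^{\mathsf{elm}}(G_0)$. By Corollary \ref{cor-dav-upper}\,(\ref*{bound1}), $\mathsf D^{\mathsf{elm}}(G_0)$ is at most twice the maximum of $|\det \tilde M|$ over nonsingular $(r+1)\times(r+1)$ $(0,1)$-matrices $\tilde M$, and this maximum is bounded via the standard $(0,1)\leftrightarrow(\pm 1)$ correspondence on an $(r+2)\times(r+2)$ matrix together with Hadamard's inequality by $2^{-(r+1)}(r+2)^{(r+2)/2}$. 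Combining these estimates gives the stated upper bound.

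For the lower bound, my idea is to construct an explicit atom of length $\mathsf F_{r+2}$ in $\mathcal B(G_0)$ by combining the Fibonacci sequence $S_r$ from Proposition \ref{fib-bound} with copies of $-e$, where $e = \mathsf e_1 + \cdots + \mathsf e_r$. Since $\sigma(S_r) = \mathsf F_r \cdot e$ and $-e \in -G_r^+ \subset G_0$, the candidate
\[
U := S_r \cdot (-e)^{\mathsf F_r} \in \mathcal F(G_0)
\]
satisfies $\sigma(U) = 0$ and $|U| = \mathsf F_{r+1} + \mathsf F_r = \mathsf F_{r+2}$; what remains is to verify that $U$ is an atom.

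The verification will be short: every proper nonempty subsequence of $U$ has the form $T \cdot (-e)^k$ with $T \mid S_r$ and $0 \leq k \leq \mathsf F_r$, and its sum vanishes exactly when $\sigma(T) = k\cdot e \in H_r := \langle e\rangle$. The extreme cases $T = \emptyset$ and $T = S_r$ force $k = 0$ and $k = \mathsf F_r$ respectively, which correspond precisely to the excluded empty and full sequences. For any other $T$, the sum $\sigma(T)$ lies in $\Sigma_{\leq|S_r|-1}(S_r)$, which by Proposition \ref{fib-bound} is disjoint from $H_r$, so no such subsequence can be zero-sum. Thus $U$ will be an atom of $\mathcal B(G_0)$, yielding $\mathsf F_{r+2} \leq \mathsf D(G_0)$. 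The entire lower bound argument hinges on the ``partial sums avoid $H_r$'' property of $S_r$, and this is precisely what Proposition \ref{fib-bound} supplies; no new difficulty arises here beyond assembling the construction above.
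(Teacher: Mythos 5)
Your proposal is correct and follows essentially the same route as the paper: the upper bound assembles Theorem~\ref{thm-elm-atom-bound}, Corollary~\ref{cor-dav-upper}, and the standard $(0,1)\!\leftrightarrow\!(\pm 1)$/Hadamard bound exactly as in the discussion preceding the theorem, and the lower bound uses the same candidate $U = S_r\,(-\mathsf e_1-\cdots-\mathsf e_r)^{\mathsf F_r}$ built from the Fibonacci sequence of Proposition~\ref{fib-bound}, with the same ``partial sums of $S_r$ avoid $H_r$'' property doing all the work. The only cosmetic difference is organizational: you phrase the verification in terms of the pair $(T,k)$ with $T\mid S_r$ and $k$ copies of $-\mathsf e_1-\cdots-\mathsf e_r$ and reduce to $\sigma(T)\in H_r$, whereas the paper writes the same case split as $T=T^+T^-$; both handle the boundary cases $T=\emptyset$ and $T=S_r$ identically.
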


\begin{proof}
The upper bounds follow from Corollary \ref{cor-dav-upper} (see the discussion following the corollary).
It remains to show $\mathsf F_{r+2} \le \mathsf D(G_0 )$.
Let $S_0\in \Fc(G_r^{+})$ be a sequence satisfying the conclusions of
Proposition \ref{fib-bound} and define
$$U=S_r (-\mathsf e_1-\cdots-\mathsf e_r)^{\mathsf F_r}\in \Fc(G_0).$$
Then $|U|=|S_r|+\mathsf F_r = \mathsf F_{r+1}+\mathsf F_r=\mathsf F_{r+2}$ and
$\sigma(U)=\sigma(S_r)-(\mathsf F_r\mathsf e_1+\cdots+\mathsf F_r\mathsf e_r)=0$.
Moreover, by the definition of $U$ it is clear that if $T\mid U$ is a zero-sum
subsequence with $T=T^+ T^-$ where $\supp(T^+)\subset G_r^{+}$ and $\supp(T^-)\subset -G_r^{+}$,
then we must have $\sigma(T^+)\in H$ with $T^+\mid S_r$. However, since $\sigma(S_r)\in H_r$ and $\Sigma_{\leq |S_r|-1}(S_r)\cap H_r=\emptyset$,
this is only possible if either $T^+$ is trivial or $T^+=S_r$. If $T^+$ is trivial, then
$T=(-\mathsf e_1-\cdots-\mathsf e_r)^{|T|}$ which is a zero-sum sequence only if $T$ is trivial.
If $T^+=S_r$, then $\sigma(T^+)=\sigma(S_r)=\mathsf F_r \mathsf e_1+\cdots+\mathsf F_r \mathsf e_r$
and it then follows from the definition of $U$ that the only way $T$ can be a zero-sum is
if $T=U$. Therefore $U$ is a zero-sum sequence of length $\mathsf F_{r+2}$ having no proper
nontrivial zero-sum subsequences. Thus $\mathsf D(G_0)\geq \mathsf F_{r+2}$ as desired.
\end{proof}

\medskip
\begin{remark} \label{final}~
Let $r \in \N$, $(\mathsf e_1, \ldots, \mathsf e_r)$, and $G_0$ be as in Theorem \ref{4.2} but with $r > 1$. From Theorem \ref{thm-elm-atom-bound} we know that $\mathsf D^{\mathsf{elm}}(G_0)\leq \mathsf D(G_0)\leq \frac12 |G_0|\mathsf D^{\mathsf{elm}}(G_0)$. But the expected value of $\mathsf D^{\mathsf{elm}}(G_0)$ is much larger than $|G_0|$, meaning that $\mathsf D(G_0)\approx \mathsf D^{\mathsf{elm}}(G_0)$. Indeed, by a computer based search we  have verified that $\mathsf D(G_0)= \mathsf D^{\mathsf{elm}}(G_0)$ for $r \in \{2,3\}$. However, whether $\mathsf D(G_0)= \mathsf D^{\mathsf{elm}}(G_0)$ remains true for $r\geq 4$ is not known.
For the module-theoretic relevance of this question see \cite[Lemma 6.9 and Corollary 6.10]{Ba-Ge14b}.
\end{remark}

\bigskip
\section{Monoids of modules over commutative Noetherian local rings} \label {5}
\bigskip

In this section we study direct-sum decompositions of finitely generated modules over commutative Noetherian local rings. With $R$ a commutative Noetherian local ring and $\mathcal C$ a class of finitely generated  modules over $R$, it is well-known that the monoid of modules $\mathcal V (\mathcal C)$ is Krull. Our first lemma summarizes some basic information about $\mathcal C$.

\smallskip
\begin{lemma} \label{5.1}
Let $(R, \mathfrak m)$ be a commutative Noetherian local ring with maximal ideal $\mathfrak m$ and let $(\widehat R, \widehat{\mathfrak m})$ denote its $\mathfrak m$-adic completion. Let $\mathcal C$ denote the class of all finitely generated $R$-modules and let $\mathcal C'$ denote a subclass of $\mathcal C$ such that $\mathcal V (\mathcal C') \subset \mathcal V (\mathcal C)$ is a divisor-closed submonoid.

\begin{enumerate}
\item The embedding $\mathcal V (\mathcal C) \hookrightarrow \mathcal V (\widehat{\mathcal C})$, defined by
$[M] \mapsto [M \tensor \widehat R ]$, is a divisor homomorphism into the free abelian monoid $\mathcal V (\widehat{\mathcal C})$, where $\widehat{\mathcal C}$ denotes the class of finitely generated $\widehat R$-modules. In particular, $\mathcal V (\mathcal C)$ is a Krull monoid.

\smallskip
\item $\mathcal V (\mathcal C')$ is a Krull monoid whose class group is an epimorphic image of a subgroup of the class group of $\mathcal V (\mathcal C)$. If $\mathcal V (\mathcal C')$ is tame, then each of the arithmetical finiteness results of Proposition \ref{2.1} hold.

\smallskip
\item Suppose, in addition, that $R$ is one-dimensional and reduced (no non-zero nilpotent elements). Let $G$ denote the class group of $\mathcal V (\mathcal C')$ and let $G_P \subset G$ denote the set of classes containing prime divisors.
      \begin{enumerate}
      \item The class group $G$ of $\mathcal V ( \mathcal C')$ is a finitely generated abelian group.
\smallskip
      \item $\mathcal V (\mathcal C')$ is tame if and only if $\mathsf D (G_P) < \infty$ if and only if $G_P$ is finite.
      \end{enumerate}
\end{enumerate}
\end{lemma}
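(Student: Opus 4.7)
The plan is to handle the three statements sequentially, using faithfully flat descent and Krull--Remak--Schmidt for the first, general Krull monoid structure theory for the second, and a combination of module-theoretic structure results with Section~\ref{4} for the third.

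For statement~1, I would invoke two classical ingredients: first, faithfully flat descent of direct-summand relations across $R \to \widehat{R}$, which ensures that for finitely generated $M$ and $N$, $M$ is a direct summand of $N$ over $R$ if and only if $M \tensor \widehat{R}$ is a direct summand of $N \tensor \widehat{R}$ over $\widehat{R}$; second, the Krull--Remak--Schmidt--Azumaya theorem over a complete local ring, which makes endomorphism rings of finitely generated $\widehat{R}$-modules semilocal (and local for indecomposables). The assignment $[M] \mapsto [M \tensor \widehat{R}]$ is then a well-defined monoid homomorphism (tensor commutes with direct sums), injective (because of the descent), and a divisor homomorphism (by the summand-descent statement). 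Krull--Remak--Schmidt guarantees that $\mathcal{V}(\widehat{\mathcal{C}})$ is free abelian on the classes of indecomposable finitely generated $\widehat{R}$-modules, so $\mathcal{V}(\mathcal{C})$ embeds into a free abelian monoid via a divisor homomorphism, which is the definition of a Krull monoid.

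For statement~2, I would quote two standard facts about Krull monoids: a divisor-closed submonoid of a Krull monoid is again Krull, and there is a canonical epimorphism from a subgroup of the class group of the ambient Krull monoid onto the class group of the divisor-closed submonoid (these are standard results from the monograph of Geroldinger and Halter-Koch cited in the preliminaries). Applied to $\mathcal{V}(\mathcal{C}') \subset \mathcal{V}(\mathcal{C})$ in the setting of part~1, this yields the desired statement about the class group. If $\mathcal{V}(\mathcal{C}')$ is tame, then Proposition~\ref{2.1} directly gives the arithmetical finiteness properties.

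For statement~3, part~(a) is the main point requiring non-trivial module theory. The strategy is to identify the class group of $\mathcal{V}(\mathcal{C}')$ as a subquotient of the class group of $\mathcal{V}(\mathcal{C})$ (via part~2) and then the latter as a cokernel built from the embedding $\mathcal{V}(\mathcal{C}) \hookrightarrow \mathcal{V}(\widehat{\mathcal{C}})$, which is a finitely generated free abelian monoid whose basis is generated by a \emph{finite} number of building-block indecomposables in the one-dimensional reduced Noetherian local setting; this finiteness of the relevant generating set (together with the finite number of minimal primes of $\widehat{R}$, which controls the relevant quotient) forces the class group to be finitely generated. For part~(b), one direction of the tameness equivalence is Proposition~\ref{3.1}(3). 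For the converse, if $\mathcal{V}(\mathcal{C}')$ is tame then by Lemma~\ref{2.2} and the transfer homomorphism $\boldsymbol{\beta}$ of Proposition~\ref{3.1}(1), $\mathcal{B}(G_P)$ has bounded catenary degree, hence atoms of bounded length, giving $\mathsf{D}(G_P) < \infty$. Finally, the equivalence $\mathsf{D}(G_P) < \infty \Leftrightarrow G_P$ finite in a finitely generated abelian group $G = \Z^r \oplus T$ (with $T$ finite) reduces via projection to the rank-$r$ free part: if the projection of $G_P$ to $\Z^r$ is infinite, Theorem~\ref{4.2} (or the lower bounds of Section~\ref{4} applied to subsets containing $G^+_s \cup -G^+_s$-like configurations in growing rank) forces $\mathsf{D}(G_P) = \infty$, while if the projection is finite, then $G_P$ is finite since the torsion part is.

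The main obstacle is part~(3a): making precise the claim that finite generation of the class group in the one-dimensional reduced local setting follows from the finiteness of the number of indecomposable generators of $\mathcal{V}(\widehat{\mathcal{C}})$ that are needed for the image computation. Here one needs results of Wiegand and others on the structure of the monoid of finitely generated $\widehat{R}$-modules and its relation to the minimal prime structure of $\widehat{R}$; the remaining steps are straightforward manipulations with transfer homomorphisms and subset-sum combinatorics.
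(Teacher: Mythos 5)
Your treatment of parts~1 and~2 is essentially the paper's approach. For part~1 the paper cites Wiegand and Leuschke--Wiegand for exactly the descent-of-summands fact you describe; for part~2 the paper gives the same subquotient computation and cites \cite[Theorem 2.4.7]{Ge-HK06a}. The problems are in part~3.

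For~3(a), your sketch contains a false premise: you assert that $\mathcal V(\widehat{\mathcal C})$ is a ``finitely generated free abelian monoid whose basis is generated by a finite number of building-block indecomposables.'' This is not true in general; a one-dimensional reduced complete Noetherian local ring can easily have infinitely many isomorphism classes of indecomposable finitely generated modules (infinite representation type), so $\mathcal V(\widehat{\mathcal C})$ is free abelian on a possibly infinite basis. Moreover, the class group of $\mathcal V(\mathcal C)$ is not simply a ``cokernel'' of the embedding $\mathcal V(\mathcal C)\hookrightarrow\mathcal V(\widehat{\mathcal C})$: one must first pass to the relevant divisor theory (involving only those primes dividing elements of the image), and then take the quotient of the corresponding quotient groups. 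The finiteness of the rank of the resulting class group is the content of the non-trivial module-theoretic theorem \cite[Theorem 6.3]{H-K-K-W07}, whose proof analyzes the genus/rank data of extended modules with respect to the minimal primes of $\widehat R$; the paper simply cites that theorem, and there is no short way around it.

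For~3(b), the step ``$\mathcal B(G_P)$ has bounded catenary degree, hence atoms of bounded length'' is a non-sequitur, and the implication ``tame $\Rightarrow \mathsf D(G_P)<\infty$'' is false without further hypotheses. Counterexample: take $G = \bigoplus_{k\ge 2}\Z/k\Z$ and $G_P = \{g_k : k\ge 2\}$ where $g_k$ generates the $k$-th summand. Then $\mathcal B(G_P)$ is free abelian on $\{g_k^{\,k}\}$, hence factorial (so $\omega=\mathsf t=0$ and the monoid is trivially tame), yet $\mathsf D(G_P)=\sup_k k=\infty$. The correct argument is the one the paper gives: once 3(a) establishes that $G$ is finitely generated, \cite[Theorem 4.2]{Ge-Ka10a} yields ``tame $\Leftrightarrow \mathsf D(G_P)<\infty$,'' and \cite[Theorem 3.4.2]{Ge-HK06a} yields ``$\mathsf D(G_P)<\infty \Leftrightarrow G_P$ finite'' --- both citations using the finite generation of $G$ in an essential way. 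Your sketch for 3(b) does not use 3(a) at all, which is a sign that the logic cannot work as written.
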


\begin{proof}
The monoid $\mathcal V (\widehat{\mathcal C})$ is free abelian by the Theorem of Krull-Remak-Schmidt-Azumaya. Also, the embedding defined by $[M]\mapsto [M \tensor \hat R]$ is a divisor homomorphism by \cite{Wi01} or \cite[Corollary 1.15]{Le-Wi12a}, and hence $\mathcal V (\mathcal C)$ is a Krull monoid, proving 1.

Let $H = \mathcal V (\mathcal C)$ and suppose that the inclusion $H \hookrightarrow F = \mathcal F (P)$ a divisor theory. Then $\mathcal C (H) = \mathsf q (F)/\mathsf q (H)$. If $H' \subset H$ is divisor-closed, the inclusion $H' \hookrightarrow F' = \mathcal F (P')$, where $P' = \{ p \in P : p \ \text{divides} \newline  \text{some} \ a  \in H'\ \text{in}\ F \}$, is a divisor homomorphism. Note that $H' = F' \cap \mathsf q (H')$ and that $\mathsf q (H') = \mathsf q (F') \cap \mathsf q (H)$. Therefore the homomorphism $\mathsf q (F') \to \mathsf q (F)/\mathsf q (H)$, defined by $a \mapsto a \mathsf q (H)$ for each $a \in \mathsf q (F')$, has kernel $\mathsf q (F') \cap \mathsf q (H) = \mathsf q (H')$. Consequently, there exists a monomorphism $\mathsf q (F')/\mathsf q (H') \to \mathsf q (F)/\mathsf q (H) = \mathcal C (H)$. Finally, \cite[Theorem 2.4.7]{Ge-HK06a} implies that the class group of $H'$ is an epimorphic image of a subgroup of $\mathsf q (F')/\mathsf q (H')$, and hence of a subgroup of $\mathcal C (H)$, proving 2.

We first consider the first statement of 3. The  class group of $\mathcal V (\mathcal C)$ is free abelian of finite rank by \cite[Theorem 6.3]{H-K-K-W07}. Therefore, by statement 2, the class group $G$ of $\mathcal V (\mathcal C')$ is finitely generated. We now consider the second statement of 3. Since $G$ is finitely generated, \cite[Theorem 4.2]{Ge-Ka10a} implies that  $\mathcal V (\mathcal C')$ is tame if and only if $\mathsf D (G_P) < \infty$. Again, since $G$ is finitely generated, \cite[Theorem 3.4.2]{Ge-HK06a} implies that $\mathsf D (G_P) < \infty$ if and only if $G_P$ is finite.
\end{proof}

\medskip
\begin{remark} \label{5.2}
We now make two brief remarks on certain hypothesis in Statement 3 of Lemma \ref{5.1}.
\begin{remenumerate}
\item The assumption that $R$ is reduced can be slightly weakened (see \cite[Section 6]{H-K-K-W07}). However, the assumption that $R$ is one-dimensional is essential for guaranteeing that the class group of $\mathcal V(\mathcal C')$ is finitely generated (see \cite[Lemma 2.16]{Le-Wi12a}). Indeed, there is a  two-dimensional complete Noetherian local Krull domain $S$ whose class group is not finitely generated. By a result of Heitman, there is a factorial two-dimensional Noetherian local domain $R$ whose completion is isomorphic to $S$. Then, if $\mathcal C$ is the class of finitely generated torsion-free $R$-modules, the class group of $\mathcal V (\mathcal C)$ is isomorphic to the class group of $S$ (see \cite[Section 5]{Ba-Ge14b}), and hence not finitely generated.

\item Both characterizations in 3(b) strongly depend on the fact that the class group of $\mathcal V (\mathcal C')$ is finitely generated and thus the hypothesis that $R$ is one-dimensional in 3(a) is critical for the results of this section pertaining to monoids of modules.
\end{remenumerate}
\end{remark}

\medskip
Let $R$ be a one-dimensional reduced commutative Noetherian local ring and let $\mathcal C'$ be a class of finitely generated $R$-modules such that $\mathcal V(\mathcal C')$ is a divisor-closed submonoid of the monoid $\mathcal V(\mathcal C)$ of all finitely generated $R$-modules. Then $\mathcal V (\mathcal C')$ is a Krull monoid with finitely generated class group $G$ and we let $G_P \subset G$ denote the set of classes containing prime divisors. By Proposition \ref{3.1}, sets of lengths in $\mathcal V(\mathcal C')$ can be studied in the monoid $\mathcal B (G_P)$ of zero-sum sequences over $G_P$. Using the combinatorial results of Section \ref{4}, we study sets of lengths in such Krull monoids in Theorem \ref{5.3} in the case where $G_P$ contains the set $G_r^{+}\cup G_r^{-}$ where $G_r^{+}$ denotes the nonzero vertices of the hypercube in a finitely generated free abelian group. We then investigate finer arithmetical invariants of $\mathcal V(\mathcal C')$ for small values of $r$. We conclude this section with an explicit example of a monoid of modules realizing such a Krull monoid (see Corollary \ref{5.6}).

\smallskip
\begin{theorem} \label{5.3}
Let $H$ be a Krull monoid whose class group $\mathcal C (H)$ is free abelian with basis $(e_1, \ldots, e_r)$ for some $r \ge 2$, and let $G_P \subset \mathcal C (H)$ denote the set of classes containing prime divisors. Suppose that $G_P$ is finite and that $G_P = -G_P$ with $G_P \supset G_r^+$.
\begin{enumerate}
\item There exists $M \in \N_0$ such that, for each $k \ge 2$, \ $\mathcal U_k (H) = L' \cup L^* \cup L''$, where $L^*$ is an interval, $L' \subset \min L^* + [-M, -1]$, and  $L'' \subset \max L^* + [1,M]$.

\smallskip
\item For each $k \in \N$,
\[
      \rho_{2k} (H) = k \mathsf D (G_P) \ge k \mathsf F_{r+2} \quad \text{and} \quad k \mathsf D (G_P) + 1 \le \rho_{2k+1} (H) \le k \mathsf D (G_P) + \frac{\mathsf D (G_P)}{2} \,.
      \]
       \end{enumerate}
Upper bounds for $\mathsf D(G_P)$, and hence for each $\rho_k(H)$, then follow from Theorem \ref{4.2}.
\end{theorem}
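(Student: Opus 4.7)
The plan is to transfer the problem from $H$ to the combinatorial monoid $\mathcal{B}(G_P)$ via the transfer homomorphism of Proposition~\ref{3.1} and then invoke Lemma~\ref{2.2}, so that sets of lengths, their unions, and the invariants $\rho_k$ all coincide for $H$ and $\mathcal{B}(G_P)$. Since $G_P$ is finite, Proposition~\ref{3.1} gives $\mathsf{D}(G_P)<\infty$ and tameness of $H$, so the AAMP-conclusions of Proposition~\ref{2.1} are available; in particular, each $\mathcal{U}_k(H)=\mathcal{U}_k(G_P)$ is an AAMP with period $\{0,\min\Delta(H)\}$ and uniform bound $M\in\mathbb{N}_0$.

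For part~(1), the essential step is to show $\min\Delta(G_P)=1$; this forces the period to be $\{0,1\}$, the central piece $L^{*}$ of the AAMP to be an ordinary interval, and the stated decomposition of $\mathcal{U}_k(H)$ then follows immediately from the definition of AAMP. To exhibit a witness, I would exploit $r\geq 2$ together with $G_r^+\cup -G_r^+\subset G_P$: the zero-sum sequence
\[
B=(e_1+e_2)\bdot(-e_1-e_2)\bdot e_1\bdot e_2\bdot(-e_1)\bdot(-e_2)\in\mathcal{B}(G_P)
\]
factors both as a product of the two length-three atoms $(e_1+e_2)\bdot(-e_1)\bdot(-e_2)$ and $(-e_1-e_2)\bdot e_1\bdot e_2$, and as a product of the three length-two atoms $e_1\bdot(-e_1)$, $e_2\bdot(-e_2)$, $(e_1+e_2)\bdot(-e_1-e_2)$ obtained by pairing each term with its negative. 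Hence $\{2,3\}\subset\sL(B)$ and $1\in\Delta(G_P)$.

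For part~(2), I work directly in $\mathcal{B}(G_P)$ and use $G_P=-G_P$. Choose $U\in\mathcal{A}(G_P)$ with $|U|=\mathsf{D}(G_P)$. Since $-U\in\mathcal{B}(G_P)$, the identity
\[
U\bdot(-U)=\prod_{g\in\supp(U)}\bigl(g\bdot(-g)\bigr)^{\vp_g(U)}
\]
exhibits factorizations of $U\bdot(-U)$ of lengths $2$ and $\mathsf{D}(G_P)$; $k$-fold products then yield $\rho_{2k}(H)\geq k\mathsf{D}(G_P)$, and appending one further factor $g\bdot(-g)$ with $g\in G_P\setminus\{0\}$ yields $\rho_{2k+1}(H)\geq k\mathsf{D}(G_P)+1$. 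The matching upper bounds come from a length count: if $u_1\bdot\ldots\bdot u_m=v_1\bdot\ldots\bdot v_n$ in $\mathcal{B}(G_P)$ with all factors non-prime atoms (hence of length at least $2$), then $2n\leq\sum|v_j|=\sum|u_i|\leq m\mathsf{D}(G_P)$, and specializing to $m\in\{2k,2k+1\}$ gives $\rho_{2k}\leq k\mathsf{D}(G_P)$ and $\rho_{2k+1}\leq k\mathsf{D}(G_P)+\mathsf{D}(G_P)/2$. Combining with the lower bounds gives equality in the even case. Finally, $\mathsf{D}(G_P)\geq\mathsf{F}_{r+2}$ follows from the monotonicity of $\mathsf{D}$ in the underlying subset (atoms of $\mathcal{B}(G_0)$ remain atoms of $\mathcal{B}(G_0')$ for $G_0\subset G_0'$) combined with Theorem~\ref{4.2} applied to $G_r^+\cup -G_r^+\subset G_P$.

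The main technical nuance is pinning down $\min\Delta(G_P)=1$; once this is in hand, both parts reduce to routine applications of the transfer formalism, the AAMP-structure theorem (Proposition~\ref{2.1}), elementary length counting, and the combinatorial lower bound of Theorem~\ref{4.2}.
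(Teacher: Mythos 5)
Your proposal is correct and follows essentially the same route as the paper: transfer to $\mathcal B(G_P)$ via Proposition~\ref{3.1} and Lemma~\ref{2.2}, exhibit the same six-term sequence $(e_1+e_2)(-e_1-e_2)e_1e_2(-e_1)(-e_2)$ to get $1\in\Delta(G_P)$ (and hence the interval shape of $\mathcal U_k$ via Proposition~\ref{2.1}), use $\mathcal B(G_r^+\cup -G_r^+)\subset\mathcal B(G_P)$ with Theorem~\ref{4.2} for $\mathsf D(G_P)\ge\mathsf F_{r+2}$, and derive the $\rho_k$ bounds from $U^k(-U)^k=\prod((-g_i)g_i)^k$ together with the length count $2n\le\sum|v_j|=\sum|u_i|\le m\mathsf D(G_P)$. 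The only imprecision is that you posit factorizations consisting entirely of non-prime atoms; the paper makes the reduction explicit by peeling off the prime $0^{\vp_0(A)}$, which appears with equal multiplicity in both factorizations, and one should note that the resulting bound $n\le \vp_0(A)+(m-\vp_0(A))\mathsf D(G_P)/2\le m\mathsf D(G_P)/2$ uses $\mathsf D(G_P)\ge 2$.
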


\begin{proof}
By Proposition \ref{3.1}, $\mathcal B (G_P)$ and $H$ are tame and thus, by Lemma \ref{2.2}, it suffices to prove all assertions about $H$ for the monoid $\mathcal B (G_P)$.

\smallskip
By Proposition \ref{2.1}.3, it suffices to show that $\min \Delta (G_P) = 1$. If $U = e_1 e_2 (-e_1-e_2)$, then $U \in \mathcal A (G_P)$ with $\mathsf L \big( (-U)U \big) = \{2, 3\}$. Therefore $1 \in \Delta (G_P)$, proving 1.

\smallskip
Since $G_P \supset G_r^+ \cup -G_r^+$, the monoid $\mathcal B(G_r^+ \cup -G_r^+)$ is a divisor closed submonoid of $\mathcal \cB(G_P)$ and thus $\mathsf D(G_P) \ge \mathsf D \big(G_r^+ \cup -G_r^+ \big)$. Theorem \ref{4.2} now implies that $\mathsf D(G_P) \ge \mathsf D \big(G_r^+ \cup -G_r^+ \big) \ge \mathsf F_{r+2}$.
The inequalities involving $\rho_k  (G_P) $ now follow easily from the definitions. Indeed, if $k \in \N$ and $A \in \mathcal B (G_P)$ with
\[
  A = 0^{\vp_0(A)} U_1 \cdot \ldots \cdot U_k = 0^{\vp_0(A)} V_1 \cdot \ldots \cdot V_l \,,
\]
where $U_1, \ldots, U_k, V_1, \ldots, V_l \in \mathcal A (G_P) \setminus \{0\}$ are minimal zero-sum sequences, then
\[
  2l \le \sum_{i=1}^l |V_i| = |A| - \vp_0(A) = \sum_{i=1}^k |U_i| \le k \mathsf D (G_P) \,,
\]
whence $l \le k \mathsf D (G_P)/2 $ and thus $\rho_k  (G_P)  \le k \mathsf D (G_P)/2 $. If $U = g_1 \cdot \ldots \cdot g_l$ is a minimal zero-sum sequence of length $|U| = l = \mathsf D (G_P)$, then
\begin{equation}\label{eqn-exm-rho}
U^k (-U)^k = \prod_{i=1}^l \big( (-g_i)g_i \big)^k
\end{equation}
and hence $\rho_{2k} (G_P)  \ge k \mathsf D (G_P)$.
Multiplying each side of \eqref{eqn-exm-rho} by any fixed atom, we find that $\rho_{2k+1}(G_P) \ge k \mathsf D(G_P) + 1$.
\end{proof}

\medskip
We now make a conjecture that claims that, at least for sufficiently large $k\in \N$, the first statement of Theorem \ref{5.3} holds with $M=0$.

\smallskip
\begin{conjecture} \label{5.4}
Let $H$ be a Krull monoid as in Theorem \ref{5.3} and suppose, in addition, that $G_P = G_r^+ \cup -G_r^+$. Then there exists $k^* \in \N$ such that for each $k \ge k^*$, $\mathcal U_k (H)$ is an interval.
\end{conjecture}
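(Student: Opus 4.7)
The strategy is to work entirely inside the block monoid $\mathcal B(G_P)$ via the transfer homomorphism of Proposition~\ref{3.1}, using Lemma~\ref{2.2} to identify $\mathcal U_k(H)$ with $\mathcal U_k(G_P)$ for every $k$. From Theorem~\ref{5.3}(1) there is a constant $M$, independent of $k$, such that $\mathcal U_k(G_P)=L'_k\cup L^*_k\cup L''_k$ with $L^*_k$ an interval, $|L'_k|,|L''_k|\le M$, and both $L'_k$ and $L''_k$ at distance at most $M$ from the endpoints of $L^*_k$. The problem therefore reduces to showing that for $k$ large both ``exceptional'' boundary sets $L'_k$ and $L''_k$ are empty, i.e.\ that every integer in $[\lambda_k(G_P),\rho_k(G_P)]$ arises as a factorization length of some common zero-sum sequence.

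First, I would produce concrete families of zero-sum sequences witnessing large initial segments of this interval. For $k$ even, the sequence $A=U^{k/2}(-U)^{k/2}$, where $U=g_1\cdots g_{\mathsf D(G_P)}\in\mathcal A(G_P)$ is an atom of maximal length, admits the factorizations $U^j(-U)^j\prod_{i=1}^{\mathsf D(G_P)}(g_i(-g_i))^{k/2-j}$ for $j\in[0,k/2]$, using that $G_P=-G_P$ provides a length-$2$ atom $g(-g)$ for every $g\in G_P$. This yields an arithmetic progression of lengths with common difference $\mathsf D(G_P)-2$. When $\mathsf D(G_P)=3$, which holds for $r=2$, this difference is $1$ and the conjecture follows immediately. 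For $\mathsf D(G_P)\ge 4$ (that is, $r\ge 3$) one must mix $U$ with other atoms $V\in\mathcal A(G_P)$ of different lengths, combining them with length-$2$ atoms so that the resulting lengths cover every residue class modulo the gcd of the increments involved. Propagation to all sufficiently large $k$ is then handled via the inclusion $\mathcal U_{k_1}(G_P)+\mathcal U_{k_2}(G_P)\subseteq\mathcal U_{k_1+k_2}(G_P)$ applied to a finite collection of successfully treated starting values.

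The main obstacle --- and presumably the reason the statement is posed only as a conjecture --- is controlling the residual boundary sets $L'_k$ and $L''_k$, which the AAMP machinery of Proposition~\ref{2.1} only constrains up to the uniform constant $M$. The odd-index upper bound $\rho_{2k+1}(G_P)\le k\mathsf D(G_P)+\mathsf D(G_P)/2$ from Theorem~\ref{5.3}(2) already shows that the precise value of $\rho_k$ is sensitive to the arithmetic of $\mathsf D(G_P)$ and to which atom lengths near $\mathsf D(G_P)$ are actually realised in $\mathcal A(G_P)$. Extracting the missing lengths at the very top (and, symmetrically, the bottom) of $\mathcal U_k(G_P)$ appears to require a much finer structural description of the atoms of $\mathcal B(G_P)$ than the Fibonacci-type construction of Proposition~\ref{fib-bound}, and may well be entangled with the open equality $\mathsf D(G_P)=\mathsf D^{\mathsf{elm}}(G_P)$ raised in Remark~\ref{final}; I expect this delicate matching between extremal and near-extremal factorizations to be where the real work lies.
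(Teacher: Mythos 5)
The statement you were asked to prove is labelled a \emph{conjecture} in the paper; the paper does not prove it in general, but only sketches one possible strategy (for the discussion immediately following Conjecture~\ref{5.4}) and verifies the special cases $r\in\{2,3\}$ directly in Corollary~\ref{5.5}. You correctly recognize this conjectural status and turn your final paragraph into a diagnosis of why a full proof is hard, which is the right call. Your concrete calculation checks out: with $U=g_1\cdot\ldots\cdot g_{\mathsf D(G_P)}\in\mathcal A(G_P)$ of maximal length, the element $A=U^{k/2}(-U)^{k/2}$ admits factorizations $U^j(-U)^j\prod_{i=1}^{\mathsf D(G_P)}\big(g_i(-g_i)\big)^{k/2-j}$ of length $2j+\mathsf D(G_P)(k/2-j)$, producing an arithmetic progression of common difference $\mathsf D(G_P)-2$ from $k$ up to $\rho_k(G_P)$; when $r=2$ and $\mathsf D(G_P)=3$ the difference is $1$ and the upper half of the interval is immediate, and your propagation via $\mathcal U_{k_1}(G_P)+\mathcal U_{k_2}(G_P)\subset\mathcal U_{k_1+k_2}(G_P)$ is exactly the mechanism the paper uses in the proof of claim {\bf A1} inside Corollary~\ref{5.5}.

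Where you and the paper part ways is in the reduction step. The paper suggests invoking a theorem of Freeze and Geroldinger: it suffices to exhibit a single $A^*\in\mathcal B(G_P)$ whose length set is an interval and whose elasticity $\max\mathsf L(A^*)/\min\mathsf L(A^*)$ achieves the monoid's elasticity $\mathsf D(G_P)/2$. Your element $(-U)U$ has the right elasticity but need not have an interval length set --- for $r=3$ the paper computes $\mathsf L\big((-V_1)V_1\big)=\{2,5\}$, which has gaps --- so the two criteria are genuinely different, though they crash into the same wall. Both your approach (directly filling in the boundary sets $L_k'$ and $L_k''$) and the paper's (finding a suitable $A^*$) require knowing which intermediate lengths near $\mathsf D(G_P)$ are realized by atoms of $\mathcal B(G_r^+\cup -G_r^+)$, which in turn seems to require the exact value of $\mathsf D(G_P)$; the paper notes this is currently out of reach already for $r=4$. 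Your observation that the obstruction is entangled with the open question $\mathsf D(G_P)\overset{?}{=}\mathsf D^{\mathsf{elm}}(G_P)$ from Remark~\ref{final} is a reasonable and accurate reading of the situation. In short, neither your write-up nor the paper constitutes a proof, which is the expected outcome for a conjecture, and your assessment of where the difficulty lies matches the authors' own.
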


\medskip
We now discuss one possible strategy for proving Conjecture \ref{5.4}. If one could show that there exists $A^* \in \mathcal B (G_P)$ with $\mathsf L (A^*)$ an interval and $\max \mathsf L (A^*) / \min \mathsf L (A^*) = \mathsf D (G_P)/2$, then there must exist $k^* \in \N$ such that for each $k\ge k^*$, $\mathcal U_k (H)$ is an interval (\cite[Theorem 3.1]{Fr-Ge08}). Unfortunately, this strategy seems to require knowledge of the precise value of the Davenport constant, which is currently known only for $r \in [2,3]$. Even for $r=4$, it seems to be computational infeasible to compute the Davenport constant. However, for small values of $r$ we can provide a direct proof of Conjecture \ref{5.4}. Indeed, we are even able to show in Corollary \ref{5.5} that the conjecture holds when $r\in[2,3]$ for $k^* = 2$. We first provide a simple lemma.

\smallskip
\begin{lemma} \label{special-observation}
Let $G$ be a free abelian group of rank $r \in \N$, let $(e_1, \ldots, e_r)$ denote a basis for $G$, and set $G_0 = G_r^+ \cup -G_r^+$. For every $U \in \mathcal A (G_0)$ with $|U| \ge 3$ and any $g \in G_0$, we have $\mathsf v_g (U) < |U|/2$.
\end{lemma}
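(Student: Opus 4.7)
The plan is to argue by contradiction: suppose $\mathsf v_g(U) \ge |U|/2$, set $k = \mathsf v_g(U)$, and let $T = g^{-k}U$, so that $|T| = |U| - k \le k$ and, since $U$ is a zero-sum sequence, $\sigma(T) = -kg$. Because replacing the pair $(U,g)$ by $(-U,-g)$ preserves all hypotheses as well as the conclusion, I would assume without loss of generality that $g \in G_r^+$, i.e., $g$ is a nonzero $\{0,1\}$-vector.

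The crux is then to exploit the extremality of the identity $\sigma(T) = -kg$ coordinate-by-coordinate. Picking any coordinate $i$ with $g_i = 1$, every term $h$ of $T$ satisfies $h_i \in \{-1,0,1\}$ since $h \in G_0$, so
\[
-k \;=\; \sigma(T)_i \;\ge\; -|T| \;\ge\; -k;
\]
equality throughout forces both $|T| = k$ and $h_i = -1$ for every term $h$ of $T$. Because no element of $G_r^+$ has a negative coordinate, this also pins every term of $T$ into $-G_r^+$.

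From here the terms of $T$ would be identified completely. Repeating the extremal argument at every coordinate $j$ with $g_j = 1$ gives $h_j = -1$ for every such $j$ and every term $h$ of $T$; for coordinates $j$ with $g_j = 0$ we have $\sigma(T)_j = 0$ together with $h_j \in \{-1,0\}$, which forces $h_j = 0$. Hence every term of $T$ equals $-g$, so $U = g^k (-g)^k$. Since $|U| = 2k \ge 3$ yields $k \ge 2$, the subsequence $g \cdot (-g)$ is then a proper nontrivial zero-sum subsequence of $U$, contradicting that $U$ is an atom.

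The argument is mostly coordinate bookkeeping, and I do not expect a serious obstacle. The one step that really uses the special structure of $G_0$---and which I would highlight as the key idea rather than a difficulty---is the rigid pigeonhole equality case in the second paragraph, where the fact that entries of elements of $G_0$ lie in $\{-1,0,1\}$ turns the coordinate inequality into an equality strong enough to determine every term of $T$.
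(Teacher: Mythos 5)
Your proof is correct and is essentially the same argument as the paper's: after reducing by symmetry to $g \in G_r^+$, both proofs analyze a coordinate where $g$ is nonzero to see that $k = \mathsf v_g(U)$ cannot exceed the number of remaining terms, treat the equality case by pinning all remaining terms into $-G_r^+$, and then show all of them must equal $-g$, so $U = g^k(-g)^k$ contains the proper zero-sum subsequence $g(-g)$. The paper packages the last step via the support sets $S(h)$ rather than coordinate-by-coordinate bookkeeping, but the content is identical.
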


\begin{proof}
Let $U = g^k g_1 \cdot \ldots \cdot g_l \in \mathcal A (G_0)$ where $k = \mathsf v_g (U)$ and $|U| = k+l$. By symmetry we may suppose that $g \in G_r^+$. Clearly, we have $k \le l = |U|-k$ and hence $k \le |U|/2$. Assume for the sake of contradiction that $2k = |U|$. Then $g_1, \ldots, g_l \in - G_r^+$. For each $h \in G_0$ with $h = \sum_{i=1}^r a_i e_i$ for some $a_1, \ldots, a_r \in \Z$, we denote by $S(h)$ the set $S (h) = \{i \in [1,r] : a_i \ne 0\}$. We now obtain that
\[
S (g_1) \cup \cdots \cup S (g_l) \subset S (g) \subset \bigcap_{i=1}^l S (g_i) \,,
\]
and hence $g_1 = \cdots = g_l = -g$. Unless $U=g(-g)$, this contradicts the fact that $U \in \mathcal A (G_0)$.
\end{proof}

\smallskip
\begin{corollary} \label{5.5}
Let $H$ be a Krull monoid whose class group $G$ is free abelian with basis $(e_1, \ldots, e_r)$ for some $r \in [2,3]$. Let $G_P\subset G$ denote the set of classes containing prime divisors and suppose that $G_P = G_r^+ \cup -G_r^+$.
\begin{enumerate}
\item Suppose $r = 2$. Then $\mathsf c (H) = \mathsf t (H) = \mathsf D (G_P)= 3$ and, for each $k \ge 2$, the set $\mathcal U_k (H)$ is an interval. Moreover, for all $k \ge 2$ and $j\in [0,1]$, $\rho_{2k+j} (H) = 3k+j$.

\smallskip
\item Suppose $r=3$. Then $\mathsf c (H) = \mathsf D (G_P) = 5$ and, for each $k \ge 2$, the set $\mathcal U_k (H)$ is an interval. Moreover, for all $k\ge 2$ and $j \in [0,1]$,  $\rho_{2k+j} (H) = 5k+j$.
\end{enumerate}
\end{corollary}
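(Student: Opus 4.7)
The plan is to reduce everything to $\mathcal{B}(G_P)$ via Lemma~\ref{2.2} and then work with an explicit atom inventory of the block monoid.

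First I would determine $\mathsf{D}(G_P)$. For $r=2$, Lemma~\ref{special-observation} forces $\vp_g(U) < |U|/2$ in every atom $U$ of length $\ge 3$, and no such atom contains both $g$ and $-g$; since $G_P$ consists of the three inverse pairs $\{\pm e_1\}$, $\{\pm e_2\}$, $\{\pm(e_1+e_2)\}$, it follows that $|\supp(U)| \le 3$, and a brief check leaves only $U_0 := e_1 e_2(-e_1-e_2)$ and $-U_0$, so $\mathsf D(G_P) = 3$. For $r=3$, Theorem~\ref{4.2} gives $\mathsf D(G_P) \ge \mathsf F_5 = 5$; the matching upper bound follows from Theorem~\ref{thm-elm-atom-bound} combined with a finite calculation of $\sup \Delta(g_1, \ldots, g_4)$ over $4$-tuples in $G_P$ (Theorem~\ref{thm-diaconis} or Corollary~\ref{cor-dav-upper}) and the equality $\mathsf D(G_P) = \mathsf D^{\mathsf{elm}}(G_P)$ mentioned in Remark~\ref{final}. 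This $r=3$ upper bound is the main obstacle, since it is not read off Theorem~\ref{4.2} directly and requires genuine case analysis.

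For the catenary degree, the inequality $\mathsf c(H) \le \mathsf D(G_P)$ comes from Proposition~\ref{3.1}, and I would establish the reverse direction by testing $A = U_0 \cdot (-U_0)$ for an atom $U_0 = g_1 \cdot \ldots \cdot g_\ell$ of length $\ell = \mathsf D(G_P)$. This element admits the factorizations $z_1 = U_0 \cdot (-U_0)$ and $z_2 = \prod_{i=1}^\ell (g_i(-g_i))$; if these are the only ones, then $\mathsf d(z_1, z_2) = \ell$, yielding $\mathsf c(H) \ge \ell$. For $r=2$ this is immediate from the atom list. For $r=3$, with $U_0 = (e_2+e_3)(e_1+e_3)(e_1+e_2)(-e_1-e_2-e_3)^2$ from Proposition~\ref{fib-bound}, a direct check using $\supp(A) = \{\pm(e_i+e_j)\,:\,1\le i<j\le 3\}\cup\{\pm(e_1+e_2+e_3)\}$ rules out every length-$3$ and length-$4$ atom divisor of $A$. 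The tame degree claim at $r=2$ starts with $\mathsf t(H) \ge \mathsf c(H) = 3$ via Proposition~\ref{2.1}; for the reverse bound I would compute each $\mathsf t(H, u)$ directly from the definition. For $u = V_i = g_i(-g_i)$, the terms $g_i, -g_i$ of a foreign factorization lie in two distinct atoms of combined length $\le 6$, which refactor upon absorbing $V_i$ into $\le 3$ atoms; for $u = U_0$, the three terms of $U_0$ must sit in three distinct atoms (no length-$3$ atom other than $U_0$ itself contains two of them), forced to be $V_1, V_2, V_3$, whose product refactors as $U_0 (-U_0)$ with $\max\{3,2\}=3$.

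Finally, Theorem~\ref{5.3}(2) gives $\rho_{2k}(H) = k\mathsf D(G_P)$ and $\rho_{2k+1}(H) \in [\,k\mathsf D(G_P)+1,\ k\mathsf D(G_P) + \lfloor \mathsf D(G_P)/2\rfloor\,]$, which collapses to $3k+1$ for $r=2$. For $r=3$ the range is $\{5k+1,\,5k+2\}$, and I would rule out $5k+2$ by showing that an $A$ with $\min\sL(A) = 2k+1$ and $5k+2 \in \sL(A)$ would force $|A|\in\{10k+4,10k+5\}$, and that in each case the almost-all-length-$2$ factorization of length $5k+2$ (which forces $\vp_g(A)=\vp_{-g}(A)$ up to at most a single unbalanced length-$3$ or length-$4$ atom) cannot be reconciled with the length-$5$-dominated short factorization. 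For the interval property of $\mathcal U_k$, in the $r=2$ case I would parameterize the factorizations of an arbitrary $A$ by one integer $t$ (the number of copies of $-U_0$ used), making $\sL(A)$ itself an interval, whence $\mathcal U_k = \bigcup_{k \in \sL(A)} \sL(A)$ is an interval. For $r=3$, sets of lengths need not be intervals (e.g.\ $\sL(U_0(-U_0))=\{2,5\}$), but $W(-W)$ for atoms $W$ of each length $2,3,4,5$ gives $\{2,3,4,5\}\subseteq \mathcal U_2$, and combining these with products of length-$2$ atoms and Theorem~\ref{5.3}(1) fills in every intermediate value of each $\mathcal U_k$ with $k\ge 2$.
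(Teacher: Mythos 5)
Your overall plan is correct and close in spirit to the paper's, but several steps are carried out by a genuinely different route. For $r=2$ you derive $\mathsf c(H)=\mathsf t(H)=\mathsf D(G_P)=3$ from scratch, whereas the paper delegates these numbers to \cite[Theorem 6.4]{Ba-Ge14b}; your one-parameter reparametrization of all factorizations of a fixed $A\in\cB(G_P)$ (parameter $s=$ multiplicity of $U_0$) is a nice clean way to see that each $\mathsf L(A)$ is itself an interval, which is strictly stronger than the paper's route (showing each $\mathcal U_k$ is an interval via sums of $\mathcal U_2$ with $\mathcal U_{2k-2}$). For $r=3$ the paper gets $\mathsf c(H)\ge 5$ from $\mathsf L((-V_1)V_1)=\{2,5\}$ together with $2+\sup\Delta(H)\le\mathsf c(H)$, while you instead observe that $U_0(-U_0)$ has \emph{exactly} two factorizations $z_1,z_2$ with $\mathsf d(z_1,z_2)=5$, making any chain between them attain distance $5$. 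Both routes are valid and rely on the same explicit atom inventory. On $\mathsf D(G_P)=5$ for $r=3$: you cannot obtain the upper bound by invoking Remark~\ref{final}, since that remark only \emph{reports} (via computer search) that $\mathsf D=\mathsf D^{\mathsf{elm}}$ for $r\in\{2,3\}$ without proving it, and Theorem~\ref{thm-elm-atom-bound} alone gives only $\mathsf D(G_0)\le 4\,\mathsf D^{\mathsf{elm}}(G_0)$ here. So your path, like the paper's, ultimately needs either a direct (computer-assisted) enumeration of atoms or an honest case analysis; be explicit that this is not circumvented.

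The one place where your sketch is genuinely too thin to evaluate is the claim $\rho_{2k+1}(H)=5k+1$ for $r=3$, which is the heart of the paper's proof (assertion {\bf A2}). Your counting step ($|A|\in\{10k+4,10k+5\}$ with the length-$(5k+2)$ factorization almost entirely made of length-$2$ atoms) matches the paper's setup, but ``cannot be reconciled'' skips the actual argument. The paper takes the \emph{minimal} $k$ admitting such an $A$, uses the minimality to rule out $A_i=-A_j$ among the long-factorization atoms (which would reduce $k$), then exploits the explicit list $\{V_i,\,-V_i : i\in[1,4]\}$ of length-$5$ atoms and the squarefreeness of length-$4$ atoms (Lemma~\ref{special-observation}) to derive a contradiction from the multiplicity constraints. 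If you intend to bypass the minimality trick, you need to explain how to handle the case where two of the $A_\nu$'s in the length-$(2k+1)$ factorization are inverse to each other, since then the $\pm V_i$-structure alone does not immediately give a contradiction. Until that gap is closed, the $r=3$ case of your argument is incomplete.
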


\begin{proof}
When $r=2$, the statement $\mathsf c (H) = \mathsf t (H) = \mathsf D (G_P) = 3$ is a simple observation. Indeed, this setting is a special case of \cite[Theorem 6.4]{Ba-Ge14b}. The remaining assertions follow from Theorem \ref{5.3}.

\smallskip
We now assume $r=3$. A lengthy technical proof or a computer search show that $\mathsf D (G_P)=5$ and that $\{ V \in \mathcal A (G_P) : |V| = 5 \} = \{V_i, -V_i : i \in [1,4]\}$ where
\[
\begin{aligned}
V_1 & = (e_1+e_2)(e_1+e_3)(e_2+e_3)(-e_1-e_2-e_3)^2 \,, \\
V_2 & = (e_1+e_2)(e_1+e_3)(-e_2-e_3)(-e_1)^2 \,, \\
V_3 & = (-e_1-e_3)(e_1+e_2)(e_2+e_3)(-e_2)^2  \,, \quad \text{and} \\
V_4 & = (-e_1-e_2)(e_1+e_3)(e_2+e_3)(-e_3)^2 \,.
\end{aligned}
\]
Lemma \ref{special-observation} then implies that each minimal zero-sum sequence of length four is squarefree. Clearly, $\mathsf L \big( (-V_1)V_1 \big) = \{2, 5\}$.  Proposition \ref{3.1} now implies that $5 \le 2 + \max \Delta (H) \le \mathsf c (H) \le \mathsf D (G_P)=5$, whence $\mathsf c (H) = 5$. Again, by Proposition \ref{3.1}, it suffices to prove the remaining assertions for $\mathcal B (G_P)$.

Theorem \ref{5.3} implies that $\rho_{2k} (G_P) = k \mathsf D (G_P) = 5k$ and that $5k+1 \le \rho_{2k+1}(G_P) \le 5k+2$ for all $k \in \N$. In order to prove that $\mathcal U_k (H)$ is an interval for each $k \in \N$, it suffices to show that $\mathcal U_k (G_P) \cap \N_{\ge k}$ is an interval for each $k \in \N$. Indeed, this follows from a simple symmetry argument (see \cite[Lemma 3.5]{Fr-Ge08}). We proceed conclude the proof by proving the following three claims {\bf A1}, {\bf A2}, and {\bf A3} which clearly imply the assertion.

\smallskip
\begin{enumerate}
\item[{\bf A1.}\,] For each $k \in \N$, $\mathcal U_{2k} (G_P) \cap \N_{\ge 2k} = [2k, 5k]$.

\smallskip
\item[{\bf A2.}\,] For each $k \in \N$, $\rho_{2k+1} (G_P) = 5k+1$.

\smallskip
\item[{\bf A3.}\,] For each $k \in \N$, $\mathcal U_{2k+1} (G_P) \cap \N_{\ge 2k+1} = [2k+1, 5 k+1]$.
\end{enumerate}

\smallskip
{\it Proof of \,{\bf A1}}.\, For any $k \in \N$, the inclusion $\mathcal U_{2k} (G_P) \cap \N_{\ge 2k} \subset [2k, \rho_{2k} (G_P)] =  [2k, 5k]$ is clear. To prove the reverse inclusion we proceed by induction on $k$. For every $j \in [3, 5]$ there exists $U \in \mathcal A (G_P)$ with $|U| = j$, and it follows that $\{2, j\} \subset \mathsf L \big( (-U)U \big)$. Therefore $\mathcal U_2 (G_P) = [2, 5]$ and, together with the induction hypothesis, we see that for $k \ge 2$,
\[
[2k, 5k] = [2,5]+[2k-2, 5k-5] = \mathcal U_2 (G_P)+\mathcal U_{2k-2} (G_P) \subset \mathcal U_{2k} (G_P) \,.
\]

\smallskip
{\it Proof of \,{\bf A2}}.\, Assume for the sake of contradiction that there exists $k' \in \N$ such that $\rho_{2k'+1} (G_P) = 5k'+2$, and let $k \in \N$ denote the smallest integer with this property. Let $B \in \mathcal B (G_P)$ be such that
\[
B = A_1 \cdot \ldots \cdot A_{2k+1} = B_1 \cdot \ldots \cdot B_{5k+2} \,, \quad \text{where} \quad A_1, \ldots, A_{2k+1}, B_1, \ldots, B_{5k+2} \in \mathcal A (G_P) \,.
\]
Then
\[
10k+4 \le \sum_{\nu=1}^{5k+2} |B_{\nu}| = |B| = \sum_{\nu=1}^{2k+1} |A_{\nu}| \le 5(2k+1) \,,
\]
and hence, after renumbering if necessary, $|A_1| = \cdots = |A_{2k}| = 5$, $A_{2k+1} \in [4,5]$, $|B_1|= \cdots = |B_{5k+1}|=2$, and $|B_{5k+2}| \in [2,3]$. Suppose there are  $i , j \in [1, 2k+1]$ with $i<j$ such that $A_i = - A_j$. Without loss of generality, $i=1$ and $j=2$ in which case $B' = A_3 \cdot \ldots \cdot A_{2k+1}$ satisfies $2k-1, 5(k-1)+ 2 \in \mathsf L (B')$, contradicting the minimality of $k$.

Thus there are distinct $U_1, \ldots, U_4 \in  \{V_i, -V_i : i \in [1,4]\}$ such that $\{ A_i : i \in [1, 2k]\} \subset \{U_1, \ldots, U_4\}$ with $U_i \ne -U_j$ for all $i, j \in [1,4]$. Suppose that $U_1 = g_1^2g_2g_3g_4$ such that $|\{ i \in [1,2k] : A_i = U_1 \}| \ge \lceil k/2 \rceil$. Then $\mathsf v_{g_1} (B) \ge \max \{k, 2\}$. By inspection of all $W \in \mathcal A (G_P)$ with $|W|=5$, it follows that $\mathsf v_{-g_1} (U_1 \cdot \ldots \cdot U_4) = 0 = \mathsf v_{-g_1} (A_1 \cdot \ldots \cdot A_{2k})$, and hence $\mathsf v_{-g_1}(B) = \mathsf v_{-g_1} (A_{2k+1})$. Since $|B_1|= \cdots = |B_{5k+1}|=2$,  $|B_{5k+2}| \in [2,3]$, and $\mathsf v_{g_1} (B_{5k+2}) \le 1$, it follows that
\[
\mathsf v_{-g_1} (A_{2k+1}) = \mathsf v_{-g_1} (B) \ge \mathsf v_{g_1} (B)-1 \ge \max \{k-1, 1\} \,,
\]
and thus $|A_{2k+1}| = 4$. Therefore $|B_{5k+2}| = 2$, $\mathsf v_{-g_1} (A_{2k+1}) = \mathsf v_{-g_1} (B) = \mathsf v_{g_1} (B)\ge \max \{k, 2\}$. However, as we noted before, each minimal zero-sum sequences of length four is squarefree, a contradiction.

{\it Proof of \,{\bf A3}}.\, Let $k \in \N$. By assertion {\bf A2}, $\mathcal U_{2k+1} (G_P) \cap \N_{\ge 2k+1} \subset [2k+1, \rho_{2k+1} (G_P)] =  [2k+1, 5k+1]$. On the other hand,
\[
[2k+1, 5k+1] = 1+[2k, 5k] = \mathcal U_1 (G_P)+\mathcal U_{2k} (G_P) \subset \mathcal U_{2k+1} (G_P) \,. \qedhere
\]
\end{proof}

\medskip
Module theory provides an abundance of examples of Krull monoids satisfying the assumptions of Theorem \ref{5.3} (see \cite[Section 4]{Ba-Ge14b}, or \cite[Chapter 1]{Le-Wi12a} for various realization results). Below we provide one specific example of a Krull monoid where the set $G_P$ of classes containing prime divisors is precisely $G_P=G_r^+\cup G_r^-$ where $G_r$ is the set of nonzero vertices of the $r$-dimensional hypercube.

\medskip
\begin{corollary} \label{5.6}
Let $(R, \mathfrak m)$ be a one-dimensional analytically unramified commutative Noetherian local domain with unique maximal ideal $\mathfrak m$. Further assume that the $\mathfrak m$-adic completion $\widehat R$ of $R$ has $r+1$ minimal primes $\mathfrak q_1, \ldots, \mathfrak q_{r+1}$, and let $M$ be a torsion-free $R$-module whose completion $\widehat M=M\tensor_R\widehat R$ satisfies
 \[
 \widehat M \cong \bigoplus_{\emptyset\not=I \subset [1,  r+1]} \frac{\widehat R}{\cap _{i \in I}\mathfrak q_i} \,.
 \]
Then $\mathcal V (\add (M))$ is a Krull monoid with class group $G\cong \Z^r$ and the set $G_P$ of classes containing prime divisors satisfies $G_P = G_r^+ \cup -G_r^+$, where $G, \,  G_P$, and $G_r^+$ are as in Theorem \ref{5.3}. Therefore all arithmetical invariants, including all   sets $\mathcal U_k \big( \mathcal V (\add (M)) \big)$, are as described in Theorem \ref{5.3} and Corollary \ref{5.5}.
\end{corollary}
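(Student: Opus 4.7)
The plan is to apply Lemma~\ref{5.1}, explicitly compute the class group $G = \mathcal C(H)$ and the set $G_P$ of classes containing prime divisors for $H := \mathcal V(\add(M))$, and then invoke Theorem~\ref{5.3} and Corollary~\ref{5.5}. First, for each nonempty proper subset $I \subsetneq [1,r+1]$, the cyclic module $M_I := \widehat R/\bigcap_{i \in I}\mathfrak q_i$ has endomorphism ring isomorphic to $\widehat R/\bigcap_{i \in I}\mathfrak q_i$, a quotient of the complete local ring $\widehat R$ and hence itself local with no nontrivial idempotents; therefore $M_I$ is indecomposable. A direct localization, using that $\widehat R_{\mathfrak q_j}$ is a field (as $\widehat R$ is reduced of dimension $1$) and that $\mathfrak q_i \widehat R_{\mathfrak q_j} = \widehat R_{\mathfrak q_j}$ for $i \ne j$, shows that the $\widehat R_{\mathfrak q_j}$-rank of $M_I$ equals $\mathbf{1}_I(j)$. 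By Lemma~\ref{5.1}.1, the map $\varphi \colon H \hookrightarrow F := \mathcal F(P)$ sending $[N] \mapsto [N \otimes \widehat R]$ is then a saturated divisor homomorphism, with $P = \{[M_I] : \emptyset \ne I \subsetneq [1,r+1]\}$ the set of indecomposable $\widehat R$-summands of $\widehat M$.

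Next, I would determine the class group via the rank map $\rho \colon \mathsf q(F) \to \Z^{r+1}$ defined by $[M_I] \mapsto \mathbf{1}_I$, which is surjective since $\rho([M_{\{j\}}]) = \mathsf e_j$. Because $R$ is a domain, the contraction $R \cap \mathfrak q_j$ is a prime of $R$ lying below the one-dimensional prime $\mathfrak q_j$ of $\widehat R$ and hence equals $(0)$ by dimension considerations; consequently $R \to \widehat R_{\mathfrak q_j}$ factors through $\operatorname{Frac}(R)$, so every $\widehat R_{\mathfrak q_j}$-rank of $N \otimes \widehat R$ equals the $R$-rank of $N$, giving $\mathsf q(\varphi(H)) \subseteq \rho^{-1}(\Delta)$ for $\Delta := \Z \cdot (1,\ldots,1)$. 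For the reverse inclusion, $\varphi([M^k]) = k\mathbf{1}_P \in \varphi(H)$, so for any $\mathbf n \in \rho^{-1}(\Delta)$ and $k$ sufficiently large, $\mathbf n + k\mathbf{1}_P \in \N_0^P$ represents a direct summand of $\widehat{M^k}$ of constant minimal-prime rank, which descends to an $R$-summand of $M^k$ under the analytically unramified hypothesis; thus $\mathbf n \in \mathsf q(\varphi(H))$. This yields $\mathcal C(H) \cong \Z^{r+1}/\Delta \cong \Z^r$, via the isomorphism $(a_1,\ldots,a_{r+1}) + \Delta \mapsto (a_i - a_{r+1})_{i=1}^r$.

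Finally, I would compute $G_P$ by a case split on whether $r + 1 \in I$. Under the above isomorphism the class of $[M_I]$ is $(\mathbf{1}_I(i) - \mathbf{1}_I(r+1))_{i=1}^r$. If $r + 1 \notin I$, this class equals $\mathbf{1}_I \in G_r^+$; if $r + 1 \in I$ (so $I \ne [1, r+1]$ by strict containment), writing $J := I \setminus \{r+1\} \subsetneq [1, r]$, the class equals $-\mathbf{1}_{[1, r] \setminus J}$ with $[1, r] \setminus J$ nonempty, hence lies in $-G_r^+$. As $I$ varies, these two cases exhaust $G_r^+$ and $-G_r^+$ respectively, so $G_P = G_r^+ \cup -G_r^+$, and Theorem~\ref{5.3} together with Corollary~\ref{5.5} gives the stated arithmetical invariants. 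The main obstacle is the reverse inclusion $\rho^{-1}(\Delta) \subseteq \mathsf q(\varphi(H))$ in the class group computation: it is the descent statement that every $\widehat R$-summand of $\widehat{M^k}$ with constant rank at all minimal primes arises as the completion of an $R$-summand of $M^k$, and relies on both the analytic unramifiedness of $R$ and on the fact that $\widehat M$ contains every indecomposable summand with multiplicity one.
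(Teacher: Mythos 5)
The paper's own proof is a one-line citation to \cite[Example 4.21]{Ba-Ge14b}, so your explicit computation via the rank map is a genuinely different and more self-contained route, and most of it — the indecomposability of the $M_I$ via local endomorphism rings, the rank computation at the minimal primes, the surjectivity of $\rho$, and the identification of the quotient $\Z^{r+1}/\Delta \cong \Z^r$ — is on the right track. You also correctly identify the real module-theoretic crux (the descent of constant-rank $\widehat R$-modules to $R$-modules, which is where one-dimensionality and analytic unramifiedness of the domain enter).

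However, there is a gap in your definition of $P$. You take $P = \{[M_I] : \emptyset \ne I \subsetneq [1,r+1]\}$ with strict containment, but the hypothesis on $\widehat M$ ranges over all nonempty $I \subset [1,r+1]$. Since $R$ is analytically unramified, $\widehat R$ is reduced, so $\bigcap_{i=1}^{r+1}\mathfrak q_i = (0)$ and $M_{[1,r+1]} = \widehat R$ is an indecomposable direct summand of $\widehat M$; hence $[\widehat R]$ lies in $P$. Correspondingly, because $[N]\mapsto[\widehat N]$ is a divisor homomorphism and $[\widehat R] \mid [\widehat M]$, the module $R$ is a direct summand of $M$, so $[R]$ is a (prime) element of $\mathcal V(\add(M))$ and $[\widehat R]$ is one of the prime divisors. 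Its rank vector is $(1,\ldots,1)$, which maps to $0$ in $\Z^{r+1}/\Delta \cong \Z^r$, so $G_P$ as you have set it up actually contains $0$ in addition to $G_r^+ \cup -G_r^+$. Your omission of $I = [1,r+1]$ conveniently reproduces the stated equality $G_P = G_r^+ \cup -G_r^+$, but it is not justified by the hypotheses; you have silently dropped a prime divisor. To repair the argument you should make the additional prime explicit and then observe that it is harmless: a prime in the trivial class yields $\mathcal B(G_P) \cong \mathcal B(G_r^+ \cup -G_r^+) \times \N_0$, and multiplying by the free factor $\N_0$ changes none of $\mathsf D$, $\mathsf c$, $\mathsf t$, nor (using $\rho_k + \rho_l \le \rho_{k+l}$ and subadditivity of $\lambda_k$ when the $\mathcal U_j$ are intervals) any $\mathcal U_k$. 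Equivalently, one can split $\mathcal V(\add(M)) \cong \N_0 \times \mathcal V(\add(M'))$ along the $R$-summand and run the argument on $M'$. Either way, this step is needed before you may invoke Corollary \ref{5.5}, whose hypothesis literally reads $G_P = G_r^+ \cup -G_r^+$.
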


\begin{proof}
The statements about $G$ and $G_P$ follow from \cite[Example 4.21]{Ba-Ge14b}. The arithmetical consequences then follow from Theorem \ref{5.3} and Corollary \ref{5.5}.
\end{proof}

\bigskip
\section{Monoids of modules over Pr\"ufer rings} \label{6}
\bigskip

In this section we study classes $\mathcal C$ of finitely presented modules over Pr\"ufer rings and characterize the algebraic structure of the monoid $\mathcal V (\mathcal C)$. Specifically, we study certain classes of projective modules over various types of Pr\"ufer rings, and show that they are always half-factorial. We also study the catenary and tame degrees of these monoids.  We first recall the definition of a Pr\"ufer ring and related topics as well as that of finitely primary monoids. For a general reference on modules over Pr\"ufer rings, the reader may wish to consult the monograph of Fuchs and Salce \cite{Fu-Sa01}. For modules over Pr\"ufer rings with zero-divisors, we refer the reader to \cite{Fe-La02a}. For additional information on finitely primary monoids, see \cite[Sections 2.7, 3.1, and 4.3]{Ge-HK06a}.

\medskip
A {\it Pr\"ufer ring} is a commutative ring in which every finitely generated regular ideal is invertible. A commutative ring $R$ has
\begin{itemize}
\item the {\it $1\frac{1}{2}$ generator property} if, for any invertible ideal $I \subset R$ and any regular element $a \in I \setminus \rad (R) I$, there exists an element $b \in R$ such that $I = Ra + Rb$.

\smallskip
\item  {\it small zero-divisors} if for every zero-divisor $a \in R$ and any ideal $A \subset R$, $A + aR = R$ implies that $A = R$.
\end{itemize}

\medskip
A monoid $H$ is  called  {\it finitely primary} if there exist $s,\, \alpha \in \N$ with the following properties:
\begin{itemize}
\item[]
$H$ is a submonoid of a factorial monoid $F= F^\times \times [p_1,\ldots,p_s]$ for  $s$ pairwise non-associated prime elements $p_1, \ldots, p_s$ satisfying
\[\qquad
H \setminus H^\times \subset p_1 \cdot \ldots \cdot p_sF \quad \text{and} \quad (p_1 \cdot \ldots \cdot p_s)^\alpha F \subset H  \,.
\]

\end{itemize}
\quad In this case we say that $H$ is finitely primary of  {\it rank}  $s$  and  {\it exponent}  $\alpha$.

\medskip
It is easy to show that the complete integral closure of such a finitely primary monoid $H$ is $F$, and hence $H$ is a Krull monoid if and only if $H$ is factorial. Moreover, $H_{\red}$ is finitely generated if and only if $s=1$ and $(\widehat H^{\times} \DP H^{\times}) < \infty$. The main (and motivating) examples of finitely primary monoids stem from ring theory. For example, if $R$ is a one-dimensional local Mori domain with nonzero conductor $(R \DP \widehat R)$ and $\widehat R$ denotes the complete integral closure of $R$, then $R^{\bullet}$ is finitely primary (\cite[Proposition 2.10.7]{Ge-HK06a}). The arithmetic of finitely primary monoids is well-studied (\cite[Sections 2.7, 3.1, and 4.3]{Ge-HK06a}). In particular, the sets $\mathcal U_k (H)$ are finite (for one $k \ge 2$, or equivalently for all $k \ge 2$) if and only if $s=1$. In our main results of this section, Theorems \ref{6.1} and \ref{6.3}, we apply these arithmetical results to monoids of modules of modules over Pr\"ufer rings.

\medskip
\begin{theorem} \label{6.1}
Let $R$ be a Pr\"ufer ring such that $R$ has the $1\frac{1}{2}$ generator property and $R$ has small zero-divisors. Let $\mathcal C_{\proj}$ be the class of finitely generated projective $R$-modules.
\begin{enumerate}
\item $\mathcal V (\mathcal C_{\proj})$ is a finitely primary monoid of rank $1$ and of exponent $1$. Moreover, $\mathcal V (\mathcal C_{\proj})$ is finitely generated if and only if $\Pic (R)$ is finite.

\smallskip
\item If $\mathcal C_{\proj}$ does not satisfy KRSA, then  $\mathcal V (\mathcal C_{\proj})$ is half-factorial with $\mathsf c \big( \mathcal V (\mathcal C_{\proj}) \big) = \mathsf t \big( \mathcal V (\mathcal C_{\proj}) \big)  = 2$.
\end{enumerate}
\end{theorem}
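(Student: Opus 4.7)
The plan is to first describe the algebraic structure of $\cV(\cC_{\proj})$ using the module-theoretic work of Feng and Lam (goal \textbf{A}), and then to read off the arithmetic directly (goal \textbf{B}).

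First I would invoke the classification from Feng and Lam cited in the introduction. Under the hypotheses that $R$ is a Pr\"ufer ring with the $1\frac{1}{2}$-generator property and small zero-divisors, every finitely generated projective $R$-module of (constant) rank $n \ge 1$ is isomorphic to $R^{n-1} \oplus I$ for some invertible ideal $I$, and two such modules are isomorphic if and only if they have the same rank and the same class in $\Pic(R)$. The assignment $[R^{n-1} \oplus I] \mapsto (n, [I])$ therefore yields a monoid isomorphism
\[
  \cV(\cC_{\proj}) \;\cong\; \{0\} \,\sqcup\, \bigl(\N \times \Pic(R)\bigr),
\]
with addition $(n, c) + (m, c') = (n+m, c+c')$ on the non-identity part. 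Denote this monoid by $H$.

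Next I would verify the defining conditions of a finitely primary monoid of rank $1$ and exponent $1$. Consider the factorial monoid $F = \N_0 \times \Pic(R)$ with units $F^\times = \Pic(R)$ and unique prime $p = (1, 0)$. Under the obvious embedding $H \hookrightarrow F$ one has $H^\times = \{0\}$, so $H \setminus H^\times = \N \times \Pic(R) = pF$, which gives both inclusions $H \setminus H^\times \subset pF$ and $pF \subset H$. The atoms of $H$ are precisely the elements $(1, c)$ with $c \in \Pic(R)$, whence $|\cA(H)| = |\Pic(R)|$, and $H$ is finitely generated if and only if $\Pic(R)$ is finite; this proves (1).

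For (2), every factorization of a non-zero element $(n, c) \in H$ has length exactly $n$, so $H$ is half-factorial and $\Delta(H) = \emptyset$. Failure of KRSA forces $H$ not to be free abelian, hence $\Pic(R)$ is non-trivial and $H$ is non-factorial; since any two distinct factorizations of a common element must differ in at least two atoms, this gives $\mathsf c(H) \ge 2$. For the upper bound, given an atom $u = (1, c)$ and a factorization $z = v_1 \cdots v_n$ of a multiple $a$ of $u$ not containing $u$ (so necessarily $n \ge 2$), the sub-product $v_1 v_2 = (2, c_1 + c_2)$ is still a multiple of $u$ and refactors as $u \cdot (1, c_1 + c_2 - c)$, yielding $\mathsf t(H, u) \le 2$. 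Hence $\mathsf t(H) \le 2$, and \cref{2.1} gives $\mathsf c(H) \le \mathsf t(H) \le 2$, so $\mathsf c(H) = \mathsf t(H) = 2$. The main obstacle is the invocation of the Feng-Lam classification, which carries all the module-theoretic content of the $1\frac{1}{2}$-generator and small zero-divisor hypotheses; once the explicit structural isomorphism above is in hand, the arithmetic assertions reduce to elementary semigroup bookkeeping in a very simple finitely primary monoid.
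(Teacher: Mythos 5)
Your proof is correct and takes the same essential route as the paper: invoke the Feng--Lam classification to identify $\mathcal V(\mathcal C_{\proj})$ with $(\Pic(R)\times\N)\cup\{(0,0)\}$, then read off the arithmetic. The only difference lies in how the arithmetic is verified: the paper deduces $\mathsf c(H)=\mathsf t(H)=2$ by citing the general bound $\mathsf c(H)\le\mathsf t(H)\le 3\alpha-1$ for finitely primary monoids of exponent $\alpha$ from \cite[Theorem 3.1.5]{Ge-HK06a} and then obtains half-factoriality as a consequence via Proposition \ref{2.1}, whereas you observe half-factoriality directly (each nonzero $(n,c)$ has factorization length exactly $n$) and compute $\mathsf t(H,u)\le 2$ by hand via the refactorization $v_1v_2=u\cdot(1,c_1+c_2-c)$. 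Your version is more self-contained and makes transparent why the exponent-1 case is so rigid, at the cost of not slotting into the general finitely primary framework the paper wants to emphasize; both are valid and the conclusions agree.
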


\begin{proof}
We first consider statement 1. By \cite[Corollary 4]{Fe-La02a}, every module $P$ in $\mathcal C_{\proj}$ is isomorphic to $R^{n-1} \oplus I$ where $n \in \N$ is the rank of $P$, $I$ is an invertible ideal, and the isomorphism class of $P$ is determined by that of $I$ and by the rank $n$. Thus the map $\varphi \colon \mathcal V (\mathcal C_{\proj}) \to H = (\Pic (R) \times \N) \cup \{(0,0)\}$, defined by $P \mapsto ([I],n)$, is an isomorphism. By definition, $H \subset  \Pic (R) \times (\N_0, +)$ is finitely primary of rank $1$ and exponent $1$. Since $H$ is reduced, we obtain that $(\widehat H^{\times} \DP H^{\times}) = |\Pic (R)|$ and thus $H$ is finitely generated if and only if $\Pic (R)$ is finite.

\smallskip
We now compute the catenary and tame degrees based on the monoid described in 1. Suppose that $\mathcal C_{\proj}$ does not satisfy KRSA. Then $H$ is not factorial and hence $\mathsf c (H) \ge 2$. By \cite[Theorem 3.1.5]{Ge-HK06a}, every finitely primary monoid of rank $1$ and exponent $\alpha$ satisfies $\mathsf c (H) \le \mathsf t (H) \le 3\alpha-1$, and hence $\mathsf c (H) = \mathsf t (H)=2$. Now Proposition \ref{2.1} implies that $\Delta (H) = \emptyset$, that is, $H$ is half-factorial.
\end{proof}

\medskip
We now restrict our attention to direct-sum decompositions of modules over Pr\"ufer domains. Specifically, we consider the class of all finitely presented modules, including torsion modules. Before proceeding, we recall the following characterization of Pr\"ufer domains. An integral domain $R$ is  Pr\"ufer if and only if the following two equivalent conditions are satisfied:
\smallskip
\begin{itemize}
\item[\bf (P1)] The torsion submodule of any finitely generated $R$-module $M$ is isomorphic to a direct summand of $M$ (\cite[Chap.V, Cor. 2.9]{Fu-Sa01}).
\smallskip
\item[\bf (P2)] Every finitely generated $R$-module is projective if and only if it is torsion-free (\cite[Chap.V, Theorem 2.7]{Fu-Sa01}).
\end{itemize}

 \medskip
In Theorem \ref{6.3} we can provide a more precise arithmetical description of $\mathcal V(\mathcal C)$, where $\mathcal C$ is the class of finitely presented modules over a Pr\"ufer domain, if we further assume that the domain is $h$-local. We now recall this class of integral domains. Let $R$ be a domain and, for an ideal $I \subset R$, let $\Omega (I)$ denote the set of maximal ideals of $R$ containing $I$. Note, that $|\Omega (I)|=1$ implies that $R/I$ is local and hence indecomposable as an $R$-module. Also recall that a domain $R$ has finite character if each nonzero element of $R$ is contained in at most finitely many maximal ideals of $R$. Now, we say that an integral domain $R$ is  {\it $h$-local} if the following equivalent conditions are satisfied (\cite[Theorem 2.1]{Ol08a}):
\smallskip
\begin{itemize}
\item[\bf (H1)] $R$ has finite character and each nonzero prime ideal of R is contained in a unique maximal ideal.
\smallskip
\item[\bf (H2)] For each nonzero ideal $I$ of $R$, $R/I$ has a decomposition $\oplus_{\nu=1}^m R/I_{\nu}$ with $|\Omega (I_1)| = \cdots = |\Omega (I_m)|=1$.
\smallskip
\item[\bf (H3)] Each torsion torsion $R$-module $M$ is canonically isomorphic to $\oplus_{\mathfrak p \in \max (R)} M_{\mathfrak p}$.
\end{itemize}
\smallskip
For additional information on  $h$-local domains we refer the reader to the survey article by Olberding \cite{Ol08a} and to the monograph by Fontana, Houston, and Lucas \cite{Fo-Ho-Lu13a}. The next proposition gathers together the module-theoretic results necessary for the arithmetical results we present in Theorem \ref{6.3}. We would like to thank Bruce Olberding for the short proof of Proposition~\ref{6.2}.1.

\smallskip
\begin{proposition} \label{6.2}
Let $R$ be a commutative ring.
\begin{enumerate}
\item Let $m \in \N$ and let $I, I_1, \ldots, I_m$ be ideals of $R$ such that $R/I \cong R/I_1 \oplus \cdots \oplus R/I_m$. If $I$ is finitely generated and projective as an $R$-module, then $I_1, \ldots, I_m$ are also finitely generated and projective as $R$-modules.

\smallskip
\item Let $m, n \in \N$ and let $I_1, \ldots, I_m, J_1, \ldots, J_n$ be ideals of $R$. If $I_m \subset \cdots \subset I_1$, $J_n \subset \cdots \subset J_1$, and
    \[
    R/I_1 \oplus \cdots \oplus R/I_m \cong R/J_1 \oplus \cdots \oplus R/J_n \,,
    \]
    then $m=n$ and $I_{\nu} = J_{\nu}$ for each $\nu \in [1, m]$.

\smallskip
\item Let $R$ be an $h$-local Pr\"ufer domain and let $M$ be a finitely presented $R$-module. Then, as an $R$-module, $M$ decomposes as
    \[
    M \cong R/I_1 \oplus \cdots \oplus R/I_m \oplus I_{m+1} \oplus \cdots \oplus I_n \,,
    \]
    where $n \in \N_0$, $m \in [0, n]$, $I_m \subset \cdots \subset I_1$ are proper invertible ideals of $R$, and $I_{m+1}, \ldots, I_n$ are invertible ideals of $R$.
\end{enumerate}
\end{proposition}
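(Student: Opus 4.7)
The plan is to handle each part of Proposition~\ref{6.2} separately, with a specific tool per part: homological behavior of f.g.\ projective ideals for (1), exterior powers for (2), and $h$-locality combined with the valuation-domain structure theorem for (3).

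\emph{Part 1.} Because $I$ is finitely generated and projective, the exact sequence $0\to I\to R\to R/I\to 0$ makes $R/I$ finitely presented with projective dimension at most $1$. Both finite presentation and an upper bound on projective dimension descend to direct summands of any module. Hence each $R/I_k$ is finitely presented with projective dimension at most $1$. Looking at $0\to I_k\to R\to R/I_k\to 0$, the first property shows $I_k$ is finitely generated (a kernel from a f.g.\ module onto a f.p.\ module is f.g.), while the second shows $I_k$ is projective.

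\emph{Part 2.} The key idea is to recover each $I_\nu$ as the annihilator of an exterior power of $M = R/I_1 \oplus \cdots \oplus R/I_m$. Since $\Lambda^j(R/I) = 0$ for $j \ge 2$ (cyclic modules have no higher exterior powers) and $R/I \otimes R/J = R/(I+J)$, the direct-sum formula for exterior powers gives
\[
\Lambda^k M \;\cong\; \bigoplus_{S \subseteq [1,m],\,|S|=k} \bigotimes_{j\in S} R/I_j \;\cong\; \bigoplus_{|S|=k} R\big/\textstyle\sum_{j\in S} I_j \;\cong\; \bigoplus_{|S|=k} R/I_{\min S},
\]
where the last identification uses the chain $I_m \subseteq \cdots \subseteq I_1$. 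Taking annihilators yields $\ann(\Lambda^k M) = \bigcap_{|S|=k} I_{\min S} = I_{m-k+1}$, so each $I_\nu$ is recovered as an invariant of $M$. After discarding any trivial summands (those with $R/I_j = 0$), the integer $m$ is recovered as the largest $k$ with $\Lambda^k M \ne 0$. Applying the same computation to the $J$-decomposition forces $m = n$ and $I_\nu = J_\nu$ for every $\nu$.

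\emph{Part 3.} Use (P1) to write $M = T(M) \oplus F$ with $T(M)$ the finitely presented torsion submodule and $F$ finitely presented and torsion-free. By (P2) the module $F$ is finitely generated projective, and the structure theorem for f.g.\ projective modules over a Pr\"ufer domain yields $F \cong R^{n-m-1} \oplus I_{m+1}$ for some invertible ideal $I_{m+1}$, producing the summands $I_{m+1},\ldots,I_n$ (each copy of $R$ being itself an invertible ideal). For the torsion part, property (H3) gives $T(M) \cong \bigoplus_{\mathfrak p \in \max(R)} T(M)_\mathfrak p$ with only finitely many nonzero summands by finite character. Each $T(M)_\mathfrak p$ is a finitely presented torsion module over the valuation domain $R_\mathfrak p$ and therefore decomposes as $\bigoplus_k R_\mathfrak p/(a_{\mathfrak p,k})$ with $(a_{\mathfrak p,1}) \supseteq (a_{\mathfrak p,2}) \supseteq \cdots$, padding with $R_\mathfrak p$ to a common length $m$. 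Via (H2), each cyclic $R_\mathfrak p/(a_{\mathfrak p,k})$ is the localization of some $R/I_{\mathfrak p,k}$ with $I_{\mathfrak p,k}$ supported only at $\mathfrak p$; assembling across $\mathfrak p$ at a fixed level $k$ gives a cyclic $R/I_k$ with $I_k = \bigcap_\mathfrak p I_{\mathfrak p,k}$, which by finite character and pairwise-coprime supports coincides with the product of the $I_{\mathfrak p,k}$ and is therefore invertible. Combining levels produces the chain $I_m \subseteq \cdots \subseteq I_1$ of proper invertible ideals with $T(M) \cong \bigoplus_{k=1}^m R/I_k$. The main technical obstacle is the bookkeeping in part~3 that globalizes the per-prime chains into one global chain via (H2)--(H3); parts~1 and~2 are essentially formal once the right homological fact or invariant is identified.
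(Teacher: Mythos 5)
Your proposal is correct, but it takes a genuinely different route from the paper for all three parts. For Part 1 the paper gives a concrete splitting argument: with $J = I_2 \cap \cdots \cap I_m$ one has $I = I_1 \cap J$ and $R = I_1 + J$ (by a lemma of Fuchs--Salce), so $0 \to I \to I_1 \oplus J \to R \to 0$ splits to give $I_1 \oplus J \cong R \oplus I$, from which $I_1$ inherits finite generation and projectivity as a direct summand. Your homological argument (that finite presentation and projective dimension $\le 1$ pass to direct summands, then read off the properties of $I_k$ from $0 \to I_k \to R \to R/I_k \to 0$) is a clean alternative and requires no auxiliary lemma about the $I_\nu$ being pairwise comaximal. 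For Parts 2 and 3, the paper simply cites Proposition 2.10 and Theorem 4.12 of Fuchs--Salce, so you are supplying arguments the paper does not. Your exterior-power invariant $\ann(\Lambda^k M) = I_{m-k+1}$ is a standard and correct device for Part 2, though one should note that the uniqueness assertion implicitly requires the $I_\nu$ to be proper (otherwise $R/R \cong R/R \oplus R/R$ is a trivial counterexample), which is also handled by your ``discard trivial summands'' step. In Part 3 the main hand-waving is in the globalization: one needs to check that each $R_{\mathfrak p}/(a_{\mathfrak p,k})$ is genuinely a cyclic $R$-module $R/I_{\mathfrak p,k}$ (which follows from $(R/(a))_{\mathfrak p}$ being a summand of $R/(a)$ by finite character and (H3)), that $I_{\mathfrak p,k}$ is finitely generated (because $R_{\mathfrak p}/(a_{\mathfrak p,k})$ is a direct summand of the finitely presented $T(M)$, hence finitely presented), hence invertible, and that the padding is placed at the top of each local chain so the global chain $I_m \subset \cdots \subset I_1$ stays monotone and $I_1$ stays proper. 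These can all be filled in but deserve to be spelled out, whereas the paper sidesteps them entirely by citation.
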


\begin{proof}
Clearly, it is sufficient to prove that  $I_1$ is finitely generated and projective. We set $J = I_2 \cap \cdots \cap I_m$ and, by \cite[Lemma 1.1, Chap. V]{Fu-Sa01}, obtain that $I = I_1 \cap J$ and $R = I_1 + J$. Therefore there is a short exact sequence
\[
0 \to I \to I_1 \oplus J \to R \to 0
\]
of $R$-modules where the second map is the embedding and where the third map is given by $(x,y) \mapsto x
- y$ for all $x \in I_1$ and all $y \in J$. This sequence splits, and thus $I_1 \oplus J \cong R \oplus I$. Now, since $I$ is finitely generated and projective, so is $I_1$. This proves 1.

\smallskip
For the proofs of statements 2 and 3, see Proposition 2.10 and Theorem 4.12 in \cite[Chap. V]{Fu-Sa01}.
\end{proof}

We now state our main results about finitely presented modules over Pr\"ufer domains.

\smallskip
\begin{theorem} \label{6.3}
Let $R$ be a Pr\"ufer domain, $\mathcal C$ the class of all finitely presented $R$-modules, $\mathcal C_{\tor}$ the class of finitely presented torsion modules, and $\mathcal C_{\proj}$ the class of finitely generated projective modules.
\begin{enumerate}
\item $\mathcal V (\mathcal C) = \mathcal V (\mathcal C_{\proj}) \times \mathcal V (\mathcal C_{\tor})$.

\smallskip
\item If $R$ has the $1\frac{1}{2}$ generator property, then $\mathcal V (\mathcal C_{\proj})$ is finitely primary of rank $1$ and  exponent $1$.

\smallskip
\item Assume, in addition, that $R$ is $h$-local. Then $\mathcal V (\mathcal C_{\tor})$ is free abelian and $\mathcal V (\mathcal C)$ is half-factorial. Furthermore, $\mathcal V (\mathcal C)$ is either factorial or $\mathsf c \big( \mathcal V (\mathcal C) \big) = \mathsf t \big( \mathcal V (\mathcal C) \big)  = 2$.
\end{enumerate}
\end{theorem}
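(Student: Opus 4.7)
I would handle the three parts in turn, the substance being in Part 3. For \textbf{Part 1}, I would split every $M \in \cC$ via (P1) as $M \cong T(M) \oplus P$ where $T(M)$ is the torsion submodule and $P \cong M/T(M)$ is torsion-free; by (P2), $P$ is projective, and both summands inherit finite presentation from $M$. Since $T(M)$ is canonical and $P$ is determined up to isomorphism, the assignment $[M] \mapsto ([P],[T(M)])$ gives the desired monoid isomorphism $\cV(\cC) \cong \cV(\cC_{\proj}) \times \cV(\cC_{\tor})$. For \textbf{Part 2}, since a domain has $0$ as its only zero-divisor, the small-zero-divisor hypothesis of Theorem \ref{6.1} is vacuous, so Theorem \ref{6.1}.1 applies verbatim and delivers the conclusion.

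For \textbf{Part 3} the main task is to show $\cV(\cC_{\tor})$ is free abelian. Given a finitely presented torsion module $M$, Proposition \ref{6.2}.3 yields $M \cong \bigoplus_k R/I_k$ with each $I_k$ a proper invertible ideal; applying (H2) to each summand, and invoking Proposition \ref{6.2}.1 to preserve invertibility, refines this to $\bigoplus_{k,\nu} R/J_{k,\nu}$ with every $J_{k,\nu}$ invertible and $|\Omega(J_{k,\nu})|=1$. Each $R/J$ with $|\Omega(J)|=1$ is a local ring, hence an indecomposable $R$-module. For uniqueness, given an isomorphism $\bigoplus_k R/I_k \cong \bigoplus_l R/J_l$ with all ideals proper invertible and each $|\Omega(\cdot)|=1$, I would localize at an arbitrary maximal ideal $\mathfrak m$: the summands not supported at $\mathfrak m$ vanish, producing an isomorphism of finitely presented torsion modules over the valuation domain $R_{\mathfrak m}$. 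The surviving ideals $I_k R_{\mathfrak m}$ and $J_l R_{\mathfrak m}$ are principal (invertible ideals over a valuation domain are principal), so the classical structure theorem over valuation domains forces a bijection of these localized ideals. Since an invertible ideal $I$ with $\Omega(I)=\{\mathfrak m\}$ is globally recovered from $IR_{\mathfrak m}$ (being trivial at the other localizations), this pulls back to equality of the original multisets of atoms.

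Combining this freeness with Parts 1 and 2, $\cV(\cC) \cong \cV(\cC_{\proj}) \times \cV(\cC_{\tor})$ is a direct product of a finitely primary rank $1$, exponent $1$ monoid with a free abelian monoid. Every atom of such a finitely primary monoid has $p_1$-adic valuation equal to $1$, so factorization lengths coincide with the $p_1$-valuation and $\cV(\cC_{\proj})$ is half-factorial; the product with the factorial $\cV(\cC_{\tor})$ is then half-factorial as well. The standard product identities $\sc(\cV(\cC)) = \max\{\sc(\cV(\cC_{\proj})),\sc(\cV(\cC_{\tor}))\} = \sc(\cV(\cC_{\proj}))$, together with the analogous identity for $\st$, then combine with Theorem \ref{6.1}.2 to produce the dichotomy: either $\cV(\cC)$ is factorial (when KRSA holds in $\cC_{\proj}$), or $\sc(\cV(\cC)) = \st(\cV(\cC)) = 2$.

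\textbf{Main obstacle.} The technical heart is the uniqueness of the indecomposable decomposition for $\cV(\cC_{\tor})$: Proposition \ref{6.2}.2 gives uniqueness only along nested chains of ideals, while the atoms produced by (H2) are supported at different maximal ideals and hence are not nested across. Bridging this gap via localization to the structure theorem over valuation domains, together with the global reconstruction of an invertible ideal with singleton $\Omega$ from its unique non-trivial local piece, is the key step driving the argument.
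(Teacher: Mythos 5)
Your proposal is correct and essentially matches the paper's proof: Proposition~\ref{6.2}.3 together with (H2) and Proposition~\ref{6.2}.1 for existence, localization to the valuation rings $R_{\mathfrak m}$ and the chain condition on ideals (Proposition~\ref{6.2}.2 --- your ``classical structure theorem'') for uniqueness, and the decomposition $\cV(\cC)\cong F\times D$ with $D$ finitely primary of rank and exponent $1$ for the arithmetic assertions. Two details the paper makes explicit that you elide: that $h$-locality yields finite character and hence the $1\frac{1}{2}$ generator property (so Theorem~\ref{6.1} applies without carrying Part~2's hypothesis separately into Part~3), and that, by applying your existence argument to an arbitrary indecomposable $C\in\cC_{\tor}$, \emph{every} indecomposable finitely presented torsion module is of the form $R/J$ with $J$ invertible and $|\Omega(J)|=1$ --- which you need so that your uniqueness argument accounts for all factorizations in $\cV(\cC_{\tor})$, not merely those already presented in the normal form.
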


\begin{proof}
The proof of 1 follows immediately from {\bf (P1)} and {\bf (P2)}, and statement 2 follows immediately from Theorem \ref{6.1}.

\smallskip
We now prove statement 3, and we begin by showing that $\mathcal V (\mathcal C_{\tor})$ is free abelian. Since $R$ has finite character, $R$ has the $1\frac{1}{2}$ generator property. Let $M$ be a finitely presented non-zero torsion $R$-module. We now argue that the module $M$ has a decomposition as a direct sum of indecomposable finitely presented $R$-modules, and that such a decomposition is unique up to isomorphism. By Proposition \ref{6.2}.3, $M$ has a decomposition
\[
M \cong R/I_1 \oplus \cdots \oplus R / I_m \,,
\]
with $m \in \N$ and invertible ideals $I_m  \subset \cdots \subset I_1 \subsetneq R$. Property {\bf (H2)} then implies that for each $\nu \in [1, m]$, $R/I_{\nu}$ has a direct-sum decomposition into indecomposables modules, each of the form $R/J$ where $J \subset R$ and $|\Omega (J)| = 1$. Since each $I_{\nu}$ is invertible, each $I_{\nu}$ is finitely generated and projective and, by Proposition \ref{6.2}.1, the same is true for all ideals $J$ with $R/J$ occurring in the direct sum decomposition of $R/I_{\nu}$. Thus, after replacing the $R/I_{\nu}$ with direct-sums of finitely generated indecomposable $R$-modules of the form $R/J$ and then renaming, we may suppose that each $R/I_{\nu}$ is an indecomposable $R$-module, that each $I_{\nu}$ is an invertible ideal, and that $|\Omega (I_{\nu})|=1$ for each $\nu \in [1, m]$.

\smallskip
Let $M \cong C_1 \oplus \cdots \oplus C_n$ be any direct-sum decomposition of $M$ into indecomposable finitely presented $R$-modules. Then Proposition \ref{6.2} and {\bf (H2)} imply that, for each $\nu \in [1, n]$, $C_{\nu} \cong R/J_{\nu}$ for some invertible ideal $J_{\nu} \subset R$ with $|\Omega (J_{\nu})| = 1$. Therefore $M \cong R/J_1 \oplus \cdots \oplus R/J_n$. Let $\mathfrak p$ be a maximal ideal of $R$. Then
\[
M_{\mathfrak p} \cong (R/I_1)_{\mathfrak p} \oplus \cdots \oplus (R/I_m)_{\mathfrak p} \cong (R/J_1)_{\mathfrak p} \oplus \cdots \oplus (R/J_n)_{\mathfrak p}
\]
and, by {\bf (H3)}, it suffices to prove uniqueness for the $R_{\mathfrak p}$-module $M_{\mathfrak p}$. Since the set of all ideals in the valuation domain $R_{\mathfrak p}$  form a chain, the uniqueness follows from Proposition \ref{6.2}.2.

\smallskip
Suppose now that $\mathcal V (\mathcal C)$ is not factorial. By 1. and 2., $\mathcal V (\mathcal C) \cong F \times D$ where $F$ is free abelian and where $D$ is not factorial, but is finitely primary of rank $1$ and exponent $1$. Then
\[
\mathsf c (F \times D) = \mathsf t (F \times D) = \mathsf c (D) = \mathsf t (D) = 2
\]
and hence $F \times D$ is half-factorial.
\end{proof}

\smallskip
\begin{remark}
Since a Noetherian Pr\"ufer domain is precisely a Dedekind domain and since in the Noetherian setting the concepts of finitely presented and finitely generated modules coincide, Theorem \ref{6.3} also describes the monoid of all finitely generated modules over a Dedekind domain. Of course, these results can be obtained even more simply from the classical results of Steinitz. In the following section we consider direct-sum decompositions of yet another class of rings that generalize Dedekind domains.
\end{remark}

\bigskip
\section{Monoids of modules over hereditary Noetherian prime rings} \label{7}
\bigskip

In this final section we study classes $\mathcal C$ of finitely generated right modules over hereditary Noetherian prime (HNP) rings, a generalization of Dedekind prime rings (see \cite[\S5.7]{Mc-Ro01a}). Module theory over HNP rings is carefully presented in the monograph of Levy and Robson \cite{Le-Ro11a}, and it is on this work that this section is based. We begin with the arithmetical preparations necessary to state the main result in this section, Theorem \ref{hnp-proj}. There we give a characterization of the monoid of stable isomorphism classes of finitely generated projective right modules over HNP rings and use this information to study its arithmetic.

\smallskip
\begin{proposition} \label{monext}
Let $H_0$ and $D$ be monoids and define
  \[
    H = H_0 \monext D = (H_0 \setminus H_0^\times) \times D \;\cup\; H_0^\times \times \{1_D\}.
  \]
  Then $H$ is a submonoid of $H_0 \times D$.
  If $D = \{1_D\}$  or $H_0$ is a group, then $H=H_0$.
  Suppose  that $D \ne \{1_D\}$  and that $H_0$ is not a group.
  \begin{enumerate}
    \item\label{monext:units}  $H^\times = H_0^\times \times \{1_D\}$.
    \item\label{monext:nocic}
       $\quo(H) = \quo(H_0) \times \quo(D)$, $H \subset H_0 \times D$ is not saturated, and $\widehat H = \widehat H_0 \times \widehat D$.
      In particular, $H$ is not completely integrally closed and hence not a Krull monoid.
    \item\label{monext:transfer}
      The projection $\theta\colon H \to H_0,\, (a,d) \mapsto a$ is a transfer homomorphism.

    \item\label{monext:omega}
      Suppose $H_0$ and $D$ are atomic.
      Let $u \in \cA(H_0)$ and let $d \in D$.
      Then
      \[
        \max\{ \omega(H_0,u),\, \omega(D,d) \} \;\le\; \omega(H, (u,d)) \;\le\; \omega(H_0,u) + \omega(D,d) + \epsilon
      \]
      where $\epsilon=1$ if $u$ is prime and $d \in D^\times$, and $\epsilon=0$ otherwise. In particular, $\omega(H,(u,d)) < \infty$ if and only if $\omega(H_0,u) <\infty$ and $\omega(D,d) < \infty$, and if $D$ is not a group, then $\omega(H) = \infty$ and $\st(H) = \infty$.
    \item\label{monext:group}
      Suppose $H_0$ is atomic, $D$ is a group, and let $d \in D$.
      If $u \in \cA(H_0)$ is a prime element, then
      \[
        \omega(H,(u,d)) = 2\text{, }\,\tau(H,(u,d)) = 1 \text{ and } \st(H,(u,d)H^\times) = 2.
      \]
      If $u \in \cA(H_0)$ is not a prime element, then
      \[
        \omega(H,(u,d)) = \omega(H_0,u)\text{, }\,\tau(H,(u,d)) = \tau(H_0,u) \text{ and } \st(H,(u,d)H^\times) = \mathsf t(H_0,uH_0^\times).
      \]
      In particular,
      \[
        \st(H) = \max\{ 2,\,\st(H_0)\}\text{, }\,\omega(H) = \max\{ 2,\, \omega(H_0) \} \text{ and } \tau(H) = \max\{ 1,\, \tau(H_0) \}.
      \]
    \item\label{monext:cat}
      Suppose $H_0$ and $D$ are atomic.
      Let $d \in D$ and $a \in H_0 \setminus (\cA(H_0) \cup H_0^\times)$.
      Then $\sc_H((a,d)) =0$ if
      \begin{itemize}
        \item $D^\times = \{ 1,\, d \}$, and $\sZ_{H_0}(a) = \{ (uH_0^\times)^2 \}$ for some $u \in \cA(H_0)$, or
        \item $D$ is reduced, $d=1$ and $\sc_{H_0}(a) = 0$, or
        \item $D$ is reduced, $d \in \cA(D)$ and $\sZ_{H_0}(a) = \{ (uH_0^\times)^k \}$ for some $u \in \cA(H_0)$ and $k \in \N_{\ge 2}$.
      \end{itemize}
      In any other case, $\mathsf c_H((a,d)) = \max\{ 2,\, \mathsf c_{H_0}(a) \}$.
      In particular,
      \[
        \sc(H) = \max\{ 2,\, \sc(H_0) \}.
      \]
  \end{enumerate}
\end{proposition}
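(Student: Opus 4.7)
The plan is to verify the structural statements first and then read off the arithmetic. That $H$ is a submonoid of $H_0 \times D$ is immediate: the identity $(1_{H_0}, 1_D)$ lies in $H_0^\times \times \{1_D\}$, and since $H_0 \setminus H_0^\times$ is closed under multiplication by elements of $H_0$, any product of elements of $H$ with at least one non-unit first coordinate lies in $(H_0 \setminus H_0^\times) \times D$. The two degenerate cases $D = \{1_D\}$ and $H_0 = H_0^\times$ collapse $H$ to $H_0$ by inspection. For part~1, an element of $H^\times$ must have first coordinate in $H_0^\times$, which by the defining description of $H$ forces the second coordinate to be $1_D$. For part~2, I would fix a non-unit $a_0 \in H_0 \setminus H_0^\times$ and a non-identity $d_0 \in D \setminus \{1_D\}$; the pairs $(a_0 a, d)$ and $(a_0, d')$ then lie in $H$ for all $a \in H_0$ and $d, d' \in D$, and their quotients realize every $(a, 1_D)$ with $a \in \quo(H_0)$ and every $(1_{H_0}, d)$ with $d \in \quo(D)$, giving $\quo(H) = \quo(H_0) \times \quo(D)$. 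The element $(1_{H_0}, d_0) \in H_0 \times D$ then lies in $\quo(H) \setminus H$ by part~1, so the inclusion is not saturated; the equality $\widehat H = \widehat{H_0} \times \widehat D$ follows by projecting witnesses to each coordinate and, conversely, by absorbing unit first-coordinates via multiplication with $(a_0, 1_D)$, and failure of complete integral closure, hence of the Krull property, follows.

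Part~3 is verified directly: (T1) holds since $\theta(H) = H_0$ and $\theta^{-1}(H_0^\times) = H^\times$ by part~1, and (T2) is handled by distributing the $D$-coordinate between the two factors of a given $H_0$-factorization $a = bc$ according to which of $b, c$ is a unit of $H_0$. The consequence that drives the remaining parts is a clean description of factorizations: by Lemma~\ref{2.2}, the atoms of $H$ are exactly the pairs $(u, d)$ with $u \in \cA(H_0)$ and $d \in D$ arbitrary, and every factorization of $(a, e) \in H$ has the form $(u_1, f_1) \cdots (u_n, f_n)$ where $u_1 \cdots u_n = a$ is a factorization of $a$ in $H_0$ and $f_1 \cdots f_n = e$ is an ordered decomposition of $e$ in $D$. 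Thus factorizations in $H$ are in bijection with pairs consisting of a factorization in $H_0$ and an ordered $D$-decomposition of the second coordinate.

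With this engine, parts~4--5 reduce to careful bookkeeping. For the upper bound in part~4, given a multiple $(b, e) \in (u,d) H$ and a factorization $(b, e) = \prod_{\nu=1}^n (v_\nu, f_\nu)$, I would choose $\Omega_1 \subset [1,n]$ of size at most $\omega(H_0, u)$ witnessing $u \mid \prod_{\Omega_1} v_\nu$ and $\Omega_2$ of size at most $\omega(D, d)$ witnessing $d \mid \prod_{\Omega_2} f_\nu$; the union $\Omega_1 \cup \Omega_2$ provides the required covering except in the delicate case where $u$ is prime, $d \in D^\times$, and the $H_0$-quotient of the resulting product is a unit while the $D$-quotient is not $1_D$: then one must enlarge by a single index with non-unit first coordinate in order for the overall quotient to land in $H$, producing $\epsilon = 1$. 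Crucially, for $u$ non-prime any covering of size at least $2$ automatically has non-unit $H_0$-quotient, since otherwise two atoms would multiply to an associate of $u$ contradicting $u \in \cA(H_0)$; this rules out an $\epsilon$-correction in that case. The lower bounds come from lifting witnessing multiples from $H_0$ via $\theta$ and from distributing a witnessing $D$-decomposition over any $H_0$-factorization of a multiple of $u$. When $D$ is not a group there are $d \in D$ with $\omega(D, d)$ arbitrarily large, forcing $\omega(H) = \st(H) = \infty$. Part~5 is the specialization with $D = D^\times$ a group: then $\omega(D, d) = 0$ and the only remaining contribution is the $\epsilon = 1$ term in the prime case, yielding $\omega(H, (u,d)) = 2$; the $\tau$- and $\st$-values follow by direct computation from the definition in \eqref{eq:tau} combined with the general identity $\st = \max\{\omega,\,\tau + 1\}$.

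Part~6 is proved by constructing explicit short chains of factorizations. Two factorizations of $(a,d)$ that project via $\theta$ to the same $H_0$-factorization differ only in the ordered $D$-decomposition, and any two such decompositions can be interconverted by pairwise swaps of distance at most $2$, provided sufficient flexibility is available; factorizations projecting to distinct $H_0$-factorizations can be linked by pulling back a chain in $\sZ_{H_0}(a)$ via $\theta$ at distance at most $\sc_{H_0}(a)$, readjusting $D$-coordinates at each step at additional cost at most $2$. Together these yield $\sc_H((a,d)) \le \max\{2,\,\sc_{H_0}(a)\}$, and the three listed exceptional cases are precisely the degenerate configurations where neither the $H_0$-factorization of $a$ nor the $D$-decomposition of $d$ admits any nontrivial movement, in which case $(a,d)$ has a unique factorization and $\sc_H((a,d)) = 0$. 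The main obstacles throughout are the $\epsilon$-correction bookkeeping in part~4 and the exhaustive verification of the three exceptional configurations in part~6, both of which require careful attention to degenerate situations with few factors or a rigid $D$-component.
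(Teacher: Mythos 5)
Your approach matches the paper's proof essentially step for step — the same structural computations, the projection-to-$H_0$ transfer homomorphism, the covering-union argument with the $\epsilon$-correction for the $\omega$-bound, and the fibre-chain argument with constant $2$ for the catenary degree — so the strategy is sound. Two execution points to tighten: factorizations of $(a,e)$ in $H$ are multisets, so the claimed bijection with \emph{ordered} $D$-decompositions over an $H_0$-factorization overcounts when equal $H_0$-atoms carry different $D$-labels; and in part~5 one still needs an explicit witness, e.g.\ $(u,d)\mid(u,dd_0)^2$ while $(u,d)\nmid(u,dd_0)$, to pin down the lower bound $\omega(H,(u,d))\ge 2$ for prime $u$, since the bounds from part~4 alone only give $\omega(H,(u,d))\in[1,2]$.
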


\smallskip
Before proceeding with the proof, the reader may find it useful to note that $$H = H_0 \monext D=\{(h,d) \in H_0 \times D\colon h \in H_0^\times \text{ only if } d=1_D\}.$$ Also, it will be convenient in the proof to introduce the following notation. For a monoid $S$ and elements $a, b \in S$, we write $a \mid\mid b$ to denote that $a \mid b$ and $b \nmid a$, that is, $a$ is a strict divisor of $b$.

\begin{proof}
It is easily checked that $H$ is a submonoid of $H_0 \times D$ and that $H=H_0$ if $H_0 = H_0^{\times}$ or $D = \{1_D\}$. Assume now that $D \ne \{1_D\}$ and $H_0 \ne H_0^{\times}$. Fix $a_0 \in H_0 \setminus H_0^\times$ and $d_0 \in D \setminus \{\, 1_D \,\}$.

\smallskip
The proof of \ref*{monext:units} is clear.

\smallskip
For statement \ref*{monext:nocic}, note that we have $\quo(H) \subset \quo(H_0) \times \quo(D)$ and must show the reverse inclusion. Let $a,b \in H_0$ and $d,e \in D$. Then $aa_0,ba_0 \in H_0 \setminus H_0^\times$ and thus $(ab^{-1},de^{-1}) = (aa_0,d)(ba_0,e)^{-1} \in \quo(H)$. Therefore $\quo(H_0) \times \quo(D) \subset \quo(H)$. To see that the inclusion $H \subset H_0 \times D$ is not saturated, note that $(1,d_0) \in \quo(H) \,\cap\, (H_0 \times D)$, but $(1,d_0) \not \in H$.

We now show that $\widehat H = \widehat H_0 \times \widehat D$. Since $H \subset H_0 \times D$, and since both monoids have the same quotient group, it is certainly true that $\widehat H \subset \widehat{H_0 \times D} = \widehat H_0 \times \widehat D$. For the reverse inclusion, let $(x,y) \in \widehat H_0 \times \widehat D$. By definition, there exist $c \in H_0$ and $d \in D$ such that $(c,d)(x,y)^n \in H_0 \times D$ for all $n \in \N_0$. But then $ca_0x^n \in H_0 \setminus H_0^\times$ and $dy^n \in D$ for all $n \in \N_0$ and thus $(ca_0,d)(x,y)^n \in H$. Therefore $(x,y) \in \widehat H$. Since $H \ne \widehat H$, $H$ is not completely integrally closed and thus not a Krull monoid.

\smallskip
We now prove statement  \ref*{monext:transfer}. Clearly $\theta$ is surjective and, by \ref*{monext:units}, $\theta^{-1}(H_0^\times) = H^\times$. Let $(a,d) \in H$ and $b$, $c \in H_0$ such that $\theta((a,d)) = bc$. To establish that $\theta$ is a transfer homomorphism, it will suffice to find $e,f \in D$ such that $(b,e),(c,f) \in H$ and $ef=d$ since, in this case, $(a,d) = (b,e)(c,f)$, $\theta((b,e)) = b$ and $\theta((c,f)) = c$. If $b$ is not a unit in $H_0$, then $e=d$ and $f=1$ gives $(b,d)$, $(c,1) \in H$ with $ef=d$. Similarly, if $c$ is not a unit, then $e=1$ and $f=d$ gives $(b,1)$, $(c,d) \in H$ with $ef=d$. If both $b$, $c \in H_0^\times$, then $a \in H_0^\times$ and it necessarily follows that $d = 1$. Now $e=f=1$ gives $(b,1),(c,1) \in H$.

\smallskip
To show \ref*{monext:omega} we first observe that, for $(a,d)$, $(b,e) \in H$, $(a,d) \mid (b,e)$ if and only if $a \mid b$ and $d=e$ or $a \mid\mid b$ and $d \mid e$. We first prove the correctness of the lower bound and for this we may assume that $k=\omega(H,(u,d)) < \infty$. Suppose that $m,k \in \N$ with $k \le m$ and that $u_1,\ldots,u_m \in \cA(H)$ are such that $u \mid u_1\cdot \ldots \cdot u_m$. Then $(u,d) \mid (u_1,d)\cdot \ldots \cdot (u_m,d)$, and hence there exists a subproduct of at most $k$ elements that is divisible by $(u,d)$, say $(u,d) \mid (u_1,d)\cdot \ldots \cdot (u_k,d)$. But then $u \mid u_1\cdot \ldots \cdot u_k$ and consequently $\omega(H_0,u) \le k$. Similarly, suppose $m, k \in \N$ with $k \le m$ and suppose $d_1,\ldots,d_m \in \cA(D)$ are such that $d \mid d_1\cdot \ldots \cdot d_m$. Then $(u,d) \mid (u^2,d_1)\cdot \ldots \cdot(u^2,d_m)$, and so there is a subproduct of at most $k$ elements that is divisible by $(u,d)$, say $(u,d) \mid (u^2,d_1)\cdot \ldots \cdot (u^2,d_k)$. But then $d \mid d_1\cdot \ldots \cdot d_k$ and so $\omega(D,d) \le k$.

We now verify the upper bound, and for this we may assume that $k = \omega(H,u) <\infty$ and $l = \omega(D,d) < \infty$. Suppose that $m,k,l \in \N$ with $m \ge k + l + \epsilon$ and suppose $(u_1,d_1),\ldots, (u_m,d_m) \in \cA(H)$ are such that $(u,d) \mid (u_1,d_1)\cdot \ldots \cdot (u_m,d_m)$. Then $u \mid u_1\cdot \ldots \cdot u_m$ and $d \mid d_1\cdot \ldots \cdot d_m$, whence there are subsets $I$, $J \subset [1,m]$ with $\card{I} \le k$ and $\card{J} \le l$ such that $u \mid \prod_{i \in I} u_i$ and $d \mid \prod_{j \in J} d_j$. Now $\card{I \cup J} \le k + l$ and, after renumbering and possibly enlarging $I$ and/or $J$, we have that $I \cup J = [1,k+l]$ with $u \mid u_1\cdot \ldots \cdot u_{k+l}$ and $d \mid d_1\cdot \ldots \cdot d_{k+l}$. If $u$ is not a prime element, then $k \ge 2$. If $d$ is not a unit then $l \ge 1$. In either of these cases, $k+l \ge 2$ and thus $u \mid\mid u_1\cdot \ldots \cdot u_{k+l}$. Therefore $(u,d) \mid (u_1,d_1) \cdot \ldots \cdot (u_{k+l}, d_{k+l})$, and consequently $\omega(H,(u,d)) \le k + l$.
On the other hand, if $u$ is a prime element and $d$ is a unit (i.e., $k+l=1$), then $u \mid\mid u_{1}\cdot \ldots \cdot u_{k+l+1}$ and hence $(u,d) \mid (u_1,d_1)\cdot \ldots \cdot(u_{k+l+1},d_{k+l+1})$.

The claim that $\omega(H,(u,d)) < \infty$ if and only if $\omega(H_0,u) < \infty$ and $\omega(D,d) < \infty$ is clear from the now-verified inequalities. If $D$ is not a group, then (since $D$ is atomic) there exists some atom $d \in D$. Then $\omega(D,d^k) \ge k$ for all $k \in \N$, and thus $\omega(H, (u,d^k)) \ge k$ for all $u \in \cA(H_0)$. This implies that  $\omega(H) = \infty$ and therefore $\st(H) = \infty$ by Proposition \ref{2.1}.

\smallskip
We now prove statement  \ref*{monext:group} in which case we assume that $D$ is a group. Since, for any atomic monoid $S$ and any non-prime atom $u \in S$, $\mathsf t(S,uS^\times) = \max\{ \omega(S,u), \tau(S,u) + 1 \}$ (see Section \ref{2}), it will suffice to establish the claim for the $\omega$- and $\tau$-invariants.

First observe that  since every element of $D$ is a unit, if $(a,e),(b,f) \in H$ with $a \mid b$, then $(a,e) \mid (b,f)(c,g)$ for any $(c,g) \in H \setminus H^\times$, and if $a \mid\mid b$ then $(a,e) \mid (b,f)$. In particular, if $k \ge 2$, $u_1,\ldots,u_k \in \cA(H_0)$, and $u \in \cA(H_0)$, then $u \mid u_1\cdot \ldots \cdot u_k$ if and only if $(u,e) \mid (u_1,e_1)\cdot \ldots \cdot (u_k,e_k)$ for any (equivalently, all) $e,e_1,\ldots,e_k \in D$.

Since $d \in D^\times$, we have that $\omega(D,d) = 0$ and thus statement \ref*{monext:omega} implies that $\omega(H,(u,d)) = \omega(H_0,u)$ if $u$ is not a prime element, and $\omega(H,(u,d)) \in [1,2]$ if $u$ is a prime element (since then $\omega(H_0,u) = 1$). Moreover, $(u,d) \mid (u,dd_0)^2$ but $(u,d) \nmid (u,dd_0)$, and hence $\omega(H,(u,d)) \ge 2$ implying that if $u$ is a prime element, then $\omega(H,(u,d)) = 2$.

A similar argument shows that $\tau(H,(u,d)) \ge 1$ for all $u \in \cA(H_0)$. Suppose that $u$ is a prime element and that $k \in \N_{\ge 2}$, $(u_1,d_1),\ldots,(u_k,d_k) \in \cA(H)$, and $(u,d) \mid (u_1,d_1) \cdot \ldots \cdot (u_k,d_k)$. Then $u \mid u_1\cdot \ldots \cdot u_k$ and thus $u$ divides one of $u_1,\ldots,u_k$, say $u \mid u_1$. But then $(u,d) \mid (u_1,d_1)(u_2,d_2)$, showing $\tau(H,(u,d)) \le 1$.

Recall the definition of the $\tau$-invariant as the supremum of a certain set from equation~\eqref{eq:tau} in Section \ref{2}. If $u \in \cA(H_0)$ is not a prime element, then the supremum of this set is attained for $k \ge 2$, and since $\sL_H((a,e)) = \sL_{H_0}(a)$ for all $a \in H_0$ and $e \in D$ (by statement \ref*{monext:transfer}), it is immediate that $\tau(H,(u,d)) = \tau(H_0,u)$. We note that if $k=2$, then there may be factorizations in $H$ that contribute elements to this set that are not already contributed by factorizations in $H_0$. However, if $k=2$, then we necessarily have that $\min \sL(u^{-1}a) = 1$ and thus the result is the same as for $k>2$.

\smallskip
We assume now that $H_0$ and $D$ are atomic and make use of the transfer homomorphism from statement \ref*{monext:transfer} in order to prove statement \ref*{monext:cat}. By Lemma \ref{2.2}.3(b), we have that $\sc_{H_0}(a) \le \sc_H((a,d)) \le \max\{\, \sc(a,\theta),\, \sc_{H_0}(a)\,\}$. We first show that $\sc(a,\theta) \le 2$.
For an element $(b,e) \in H$ we write $\overline{(b,e)}=(b,e)H^\times$ for its class in $H_\red$. Suppose that $z=\overline{(u_1,d_1)}\cdot \ldots \cdot\overline{(u_k,d_k)}$ and $z'=\overline{(u_1,d_1')}\cdot \ldots \cdot\overline{(u_k,d_k')}$ with $k,k' \ge 2$, $u_1,\ldots,u_k \in \cA(H_0)$, and $d_1,\ldots, d_k$, $d_1',\ldots,d_k' \in D$ are two factorizations of $(a,d)$ lying in the same fibre of $\theta$. Then $\overline{(u_1,d_1)}\cdot \ldots \cdot\overline{(u_{k-1},d_{k-1}d_k)}\,\overline{(u_k,1)}$ is also a factorization of $(a,d)$ lying in the same fibre and
  \[
    \sd\big(\overline{(u_1,d_1)}\cdot \ldots \cdot\overline{(u_{k-1},d_{k-1})}\,\overline{(u_k,d_k)},\; \overline{(u_1,d_1)}\cdot \ldots \cdot\overline{(u_{k-1},d_{k-1}d_k)}\,\overline{(u_k,1)}\big) \le 2.
  \]
Inductively, we find a $2$-chain from $z$ to $\overline{(u_1,d_1\cdot \ldots \cdot d_k)}\,\overline{(u_2,1)}\cdot \ldots \cdot \overline{(u_k,1)}$. Similarly, we find a $2$-chain from $z'$ to $\overline{(u_1,d_1'\cdot \ldots \cdot d_k')}\,\overline{(u_2,1)}\cdot \ldots \cdot\overline{(u_k,1)}$ and hence  a $2$-chain from $z$ to $z'$. This shows that $\sc(a,\theta) \le 2$.

Recall that $\sc_H((a,d)) = 0$ is equivalent to $(a,d)$ having a unique factorization in $H$ and, in any other case, $\sc_H((a,d)) \ge 2$. Therefore, to establish the remaining claims, it will suffice to show that in each of the cases listed, the element $(a,d)$ has a unique factorization and that in any other case, $(a,d)$ has at least two distinct factorizations. From the inequalities already proven, we then obtain that $\sc_H((a,d)) = \max\{ 2,\,\sc_{H_0}(a) \}$.

We first suppose that $D$ is not reduced. Assume that there exists $\varepsilon \in D^\times \setminus \{1,\, d\}$ and let $k \in \N_{\ge 2}$ and $u_1,\ldots,u_k \in \cA(H_0)$ with $a = u_1\cdot \ldots \cdot u_k$. Then
  \[
    \overline{(u_1,d)}\,\overline{(u_2,1)}\cdot \ldots \cdot\overline{(u_k,1)} \quad\text{and}\quad \overline{(u_1,d\varepsilon^{-1})}\, \overline{(u_2,\varepsilon)}\, \overline{(u_3,1)} \cdot \ldots \cdot \overline{(u_k,1)}
  \]
are two distinct factorizations of $(a,d)$ since $(d\varepsilon^{-1}, \varepsilon) \ne \{ (d,1),\, (1,d) \}$. Therefore, in this case, $\sc_H((a,d)) \ge 2$.

We now consider the case where $D^\times = \{1,\,d\}$ with $d \ne 1$. If $\sZ_{H_0}(a) = \{ (uH_0^\times)^2 \}$ for some $u \in \cA(H_0)$, then the unique factorization of $(a,d)$ is $\overline{(u,d)}\,\overline{(u,1)}$. The remaining cases are as follows.
  \begin{itemize}
    \item If $a$ has two distinct factorizations, then so does $(a,d)$.
    \smallskip
    \item If $a$ has a unique factorization represented by $u_1\cdot \ldots \cdot u_k$ for some $k \in \N_{\ge 2}$ and $u_1,\ldots,u_k \in \cA(H_0)$ with $u_1 \not \simeq u_2$, then
      \[
        \overline{(u_1,d)}\,\overline{(u_2,1)}\,\overline{(u_3,1)}\cdot \ldots \cdot \overline{(u_k,1)} \quad\text{and}\quad \overline{(u_1,1)}\,\overline{(u_2,d)}\,\overline{(u_3,1)}\cdot \ldots \cdot\overline{(u_k,1)}
      \]
are two distinct factorizations of $(a,d)$.
      \smallskip
    \item If $a \simeq u^k$ for some $k \in \N_{\ge 3}$ and $u \in \cA(H_0)$, then $\overline{(u,d)}\, \overline{(u,1)}^{k-1}$ and $\overline{(u,d)}^{3}\, \overline{(u,1)}^{k-3}$ are two distinct factorizations of $(a,d)$ (here we use $d^2=1$).
  \end{itemize}

Now suppose that $D$ is reduced. If $d=1$ and, for some $k \in \N_{\ge 2}$ and $u_1,\ldots, u_k \in \cA(H_0)$, $a = u_1\cdot \ldots \cdot u_k$ represents the unique factorization of $a$ in $H_0$, then $\overline{(u_1,1)}\cdot \ldots \cdot \overline{(u_k,1)}$ is the unique factorization of $(a,d)$ in $H$. If $d \in \cA(D)$ and for some $u \in \cA(H_0)$ and $k \in \N_{\ge 2}$, $a \simeq u^k$ represents the unique factorization of $a$ in $H_0$, then $\overline{(u,d)}\,\overline{(u,1)}^{k-1}$ is the unique factorization of $(a,d)$ in $H$.

We now show that in each of the remaining cases there exist two distinct factorizations of $(a,d)$ in $H$. This is clear if $a$ has two distinct factorizations in $H_0$, and we may assume in the following that $a$ has a unique factorization represented by $a = u_1\cdot \ldots \cdot u_k$ for some $k \in \N_{\ge 2}$ and $u_1,\ldots, u_k \in \cA(H_0)$.
  \begin{itemize}
    \item If $d \ne 1$ is not an atom, there must exist $d_1$, $d_2 \in D \setminus \{1\}$ such that $d=d_1 d_2$. Then $(a,d) = (u_1,d) (u_2,1) \cdot \ldots \cdot (u_k,1)$ and $(a,d) = (u_1,d_1)(u_2,d_2)(u_3,1)\cdot \ldots \cdot (u_k,1)$ give rise to two distinct factorizations of $(a,d)$.
   \smallskip
    \item If $d \in \cA(D)$ and there exist two distinct factors in the factorization of $a$, say $u_1 \not \simeq u_2$, then
      $(a,d) = (u_1,d)(u_2,1)(u_3,1)\cdot \ldots \cdot (u_k,1)$ and $(a,d) = (u_1,1)(u_2,d)(u_3,1)\cdot \ldots \cdot (u_k,1)$ give rise to two distinct factorizations of $(a,d)$.
  \end{itemize}

\smallskip
Note that we have so far showed that $\sc(H_0) \le \sc(H) \le \max\{2,\,\sc(H_0)\}$. We now show that $2 \le \sc(H)$. Since $H_0$ is atomic, but not a group, there exists $a \in H_0 \setminus H_0^\times$. Taking a power of $a$ if necessary, we may further assume that $a \in H_0 \setminus  \cA(H_0)$. If $D$ is not reduced, then $2 \le \sc((a,1)) \le \sc(H)$. If $D$ is reduced, then, since $D \ne \{1_D\}$  by assumption, there exists $d \in D \setminus D^\times$ and, again taking a power of $a$ if necessary, we may assume $d \in D \setminus \cA(D)$. Thus $2 \le \sc((a,d)) \le \sc(H)$.
\end{proof}

\medskip
We note that if $H_0=\mathbb N_0$ and $D$ is a group, then $H = \N_0 \monext D$ as above is a finitely primary monoid of rank $1$ and exponent $1$ (as discussed at the beginning of Section \ref{6}). Also, if $H_0$, $D$ and $E$ are monoids, then $(H_0 \monext D) \monext E = H_0 \monext (D \times E)$. Before defining, in Definition \ref{definition:almost-constant-monoid}, a monoid that will later be used to describe the monoid of stable isomorphism classes of finitely generated projective right modules over an HNP ring, we give a simple lemma that will be used in the proof of Proposition \ref{acm-factorization}.

\smallskip
\begin{lemma} \label{some-zss}
Let $G$ be an additive abelian group and let $G_0 \subset G$ be a subset.
  \mbox{}
  \begin{enumerate}
    \item \label{some-zss:tor} If $G_0 = \{g\}$ for some torsion element $g \in G$, then $\mathcal A (G_0) = \{g^{\ord (g)} \}$.

    \item \label{some-zss:tf} Let $n \in \N_{\ge 2}$, let $(e_1, \ldots, e_{n-1})$ be a family of independent elements of $G$ each of infinite order, and let
    \[
      G_0 = \big\{ A_i e_i : i \in [1,n-1] \big\} \cup \Big\{ - \sum_{i=1}^{n-1} B_i e_i \Big\} \,,
    \]
    where  $A_i, B_i \in \N$ with $\gcd(A_i,B_i) = 1$ for each $i \in [1,n-1]$. Then $\mathcal A (G_0) = \{U\}$ where
    \[
      U = \prod_{i=1}^{n-1} (A_i e_i)^{\frac{B_i L}{A_i}} \Big(-\sum_{i=1}^{n-1} B_i e_i\Big)^L \quad \text{with} \quad L = \lcm\{ A_i : i \in [1,n-1] \} \,.
    \]
  \end{enumerate}
  In particular, $\cB(G_0)$ is factorial in each case.
\end{lemma}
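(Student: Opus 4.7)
The plan is to describe all zero-sum sequences explicitly in each case and observe that $\cB(G_0)$ turns out to be a free abelian monoid on one generator.

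For part~\ref*{some-zss:tor}, any sequence over $G_0=\{g\}$ has the form $g^k$ with $k\in\N_0$, and $\sigma(g^k)=kg=0$ if and only if $\ord(g)\mid k$. Hence $\cB(G_0)=\{g^{k\ord(g)}:k\in\N_0\}$ is generated by the single element $g^{\ord(g)}$, which is therefore the unique atom, and $\cB(G_0)$ is factorial.

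For part~\ref*{some-zss:tf}, set $v=-\sum_{i=1}^{n-1}B_ie_i$. Every sequence $S\in\mathcal F(G_0)$ is of the form
\[
S=\prod_{i=1}^{n-1}(A_ie_i)^{x_i}\,v^{y}\quad\text{with }x_1,\ldots,x_{n-1},y\in\N_0,
\]
and using the independence of $(e_1,\ldots,e_{n-1})$ together with the fact that each $e_i$ has infinite order, the equation $\sigma(S)=0$ is equivalent to the system $A_ix_i=B_iy$ for all $i\in[1,n-1]$. Here the coprimality hypothesis $\gcd(A_i,B_i)=1$ forces $A_i\mid y$ for every $i$, so $L=\lcm\{A_i:i\in[1,n-1]\}$ divides $y$. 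Writing $y=mL$ with $m\in\N_0$ yields $x_i=B_iLm/A_i$ for each $i$, and a direct comparison shows $S=U^{m}$ with $U$ as stated. Thus $\cB(G_0)=\{U^m:m\in\N_0\}$, so $U$ is the unique atom and $\cB(G_0)$ is factorial.

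The only possible obstacle is the bookkeeping for the coprimality condition in the second part; once one spells out $A_i\mid y$ from $A_ix_i=B_iy$ with $\gcd(A_i,B_i)=1$, the rest is immediate. Both statements then yield the final ``in particular'' claim since a monoid generated (freely) by one element is trivially factorial.
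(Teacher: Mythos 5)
Your argument is correct and follows essentially the same route as the paper: both derive the linear system $A_i x_i = B_i y$ from independence and infinite order, then use $\gcd(A_i,B_i)=1$ to get $A_i\mid y$, hence $L\mid y$. The paper words the conclusion slightly differently (one atom, which is then automatically prime, implies factorial), but the mathematics is the same.
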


\begin{proof}
If $\mathcal A (G_0) = \{S\}$, then $S$ is a prime element, in which case $\mathcal B (G_0)$ is factorial.

\smallskip
Statement \ref*{some-zss:tor} is clear and we now prove statement \ref*{some-zss:tf}. From the definition of $L$, we have $A_i \mid B_i L$. Moreover, $\sigma(U) = \sum_{i=1}^{n-1} \big( \frac{B_i L}{A_i} A_i  - B_i L\big) e_i =  0 \in G$ which shows that $U \in \cB(G_0)$. Suppose that $S = \prod_{i=1}^{n-1} (A_i e_i)^{k_i} \big(-\sum_{i=1}^{n-1} B_i e_i\big)^{k} \in \cB(G_0)$ for some $k_1,\ldots,k_n,k \in \N_0$. Note that the $k_1,\ldots,k_n$ are uniquely determined by $k$ and $\sigma(S) = 0$. Therefore, to establish that $U$ is an atom of $\cB(G_0)$, and in fact the unique atom, it will suffice to show that $L \mid k$. Since $\sigma(S) = 0$, we have that $A_i k_i = k B_i$ for each $i \in [1,n-1]$. Since $\gcd(A_i,B_i) = 1$, this implies $A_i \mid k$ and hence $L = \lcm\{ A_i : i \in [1,n-1] \} \mid k$.
\end{proof}

\medskip
\begin{definition} \label{definition:almost-constant-monoid}
  Let $\Omega$ be a set containing a designated element $0$ and let $\mathbf c \in \Q_{> 0}^\Omega$ such that $c_0 = 1$ and $c_i \in \N$ for all but finitely many $i \in \Omega$.
  Define
  \[
    \N_0^\Omega(\vec c) = \{\, \mathbf x \in \mathbb N_0^\Omega : x_0 > 0 \text{ and } \card{\supp_{\Z^\Omega}(\mathbf x - x_0 \mathbf c)} < \infty \,\} \;\cup\; \{\mathbf 0\},
  \]
that is, $\N_0^\Omega(\vec c)$ consists of those vectors which are almost everywhere equal to a non-zero multiple (determined by the coordinate $x_0$) of $\vec c$, together with the vector $\vec 0$. We write $\ell(\vec x) = x_0$. If $\card{\Omega} < \infty$, then $\N_0^\Omega(\vec c) \cong (\N \times \N_0^{\Omega \setminus \{0\}}) \cup \{\vec 0\} \cong (\N \times \N_0^{(\Omega \setminus\{0\})}) \cup \{ \vec 0 \}$. Let $\Lambda$ be a (possibly empty) set of finite, pairwise disjoint subsets of $\Omega \setminus \{0\}$ each containing at least two elements and, for each $I \in \Lambda$, let $C_I = \sum_{i \in I} c_i$. We assume that $C_I \in \N$ for all $I \in \Lambda$. Define
  \[
    \N_0^\Omega(\vec c, \Lambda) = \Big\{\, \vec x \in \N_0^\Omega(\vec c) \,:\, \sum_{i \in I} x_i = C_I \ell(\vec x) \text{ for all $I \in \Lambda$ }\,\Big\}.
  \]
\end{definition}

\smallskip
By definition,  $\N_0^\Omega(\vec c)$ and $\N_0^\Omega(\vec c, \Lambda)$ are reduced  submonoids of $(\N_0^\Omega, +)$. However, note that the inclusion $\N_0^\Omega(\vec c) \subset \N_0^\Omega$ is not saturated. Indeed, if $\vec x$, $\vec y \in \N_0^\Omega(\vec c)$, then $\vec x$ divides $\vec y$ if and only if $\vec y - \vec x \in \N_0^\Omega(\vec c)$, that is, $\vec x \le \vec y$ and either $x_0 < y_0$ or $\vec x = \vec y$.

\smallskip
\begin{proposition} \label{acm-factorization}
  Let $H = \N_0^\Omega(\vec c, \Lambda)$ with $\Omega$, $\vec c$ and $\Lambda$ as in Definition \ref{definition:almost-constant-monoid}.
  Then $H$ is a saturated submonoid of $\N_0^\Omega(\vec c)$ and the map $\ell\colon H \to (\N_0, +)$ is a transfer homomorphism.
  In particular, $\vec x \in H$ is an atom if and only if $\ell(\vec x) = 1$, and $H$ is half-factorial.

  \begin{enumerate}
    \item\label{acmf:1} If $\card{\Omega} = 1$, then $H = \N_0$.
    \item\label{acmf:cyc} Suppose $2 \le \card{\Omega} < \infty$ and $\bigcup \Lambda = \Omega \setminus \{0\}$. Write $\Omega = [0, r]$ with $r \in \N$, and $\Lambda = \{ I_1,\ldots, I_n \}$ with $n \in \N$ and $\Omega \setminus \{0\} = I_1 \uplus \cdots \uplus I_n$.
          Set $C_i = C_{I_i}$ for all $i \in [1,n]$.
          \begin{enumerate}[ref=\arabic{enumi}(\alph*)]
          \item \label{acmf:cyc-krull}
              The map $j\colon H \hookrightarrow \N_0^{r},\; \vec x \mapsto (x_1,\ldots, x_r)$  is a divisor theory (note that $x_0$ is omitted), and hence $H$ is a Krull monoid.
               Then $G = \Z^{r}/\quo(j(H))$ is the divisor class group and $G_P = \{\, \vec e_i + \quo(j(H)) \colon i \in [1,r] \,\} \subset G$ is the set of classes containing prime divisors. There is a monomorphism
              \[
               \varphi^* \colon  G \to \begin{cases}
                  \vec \Z/C_1\Z & \text{if $n=1$,} \\
                  \vec \Z^{n-1}  & \text{if $n \ge 2$}
                \end{cases} \qquad \text{with the following properties{\rm\,:}}
              \]
              If $n=1$, then  $\varphi^*$ is an isomorphism, $\varphi^* (G_P) = \{ 1 + C_1\Z \}$, and the unique class in $G_P$ contains precisely $\card{I_1}$ prime divisors.
               Let $n \ge 2$ and, for all $i \in [1,n-1]$, set $A_i = \frac{C_n}{\gcd(C_i,C_n)}$ and $B_i = \frac{C_i}{\gcd(C_i,C_n)}$.
               Then $\varphi^*(G)$ is a full rank subgroup of $\mathbb Z^{n-1}$ and
              \[
              \varphi^* ( G_P) = \big\{ A_i \vec e_i : i \in [1,n-1] \big\} \cup \Big\{ - \sum_{i=1}^{n-1} B_i \vec e_i \Big\} \,.
              \]
              For each $i \in [1,n-1]$, the class mapped onto the element $A_i \vec e_i$ contains precisely $\card{I_{i}}$ prime divisors and the class mapped onto $-\sum_{i=1}^{n-1} B_i \vec e_i$ contains precisely $\card{I_n}$ prime divisors.
            \item \label{acmf:cyc-atoms}
              An element $\vec u \in H$ is an atom if and only if $\sum_{i \in I} u_i = C_I$ for all (equivalently, any) $I \in \Lambda$.
            \item \label{acmf:cyc-cat}
              We have that $\mathsf c(H) \le 2$. Moreover, the monoid $H$ is factorial if and only if $\card{\Lambda} = 1$ and $C_1 = 1$.
              In this case, the prime elements are precisely the $\vec e_0 + \vec e_i$ for $i \in [1,r]$.
              If $H$ is not factorial, then $H$ contains no prime elements.
            \item \label{acmf:cyc-omega}
              For any atom $\vec u \in \cA(H)$ we have
              \[
                \sum_{I \in \Lambda} (C_I - \min_{i \in I} u_i) \;\le\; \omega(H,\vec u) \;\le\; \sum_{I \in \Lambda} C_I.
              \]
            \item \label{acmf:cyc-tame} If $H$ is not factorial, then $\mathsf t(H) = \omega(H) = \sum_{I \in \Lambda} C_I$ .
          \end{enumerate}

      \item \label{acmf:fin-generic}
        If $2 \le \card{\Omega} < \infty$ but $\bigcup \Lambda \subsetneq \Omega \setminus \{0\}$, set $\Omega' = \bigcup \Lambda \cup \{0\}$.
            Let $\vec c' \in \Q_{>0}^{\Omega'}$ be defined by $c'_i = c_i$ for all $i \in \Omega'$ and let $H' = \N_0^{\Omega'}(\vec c', \Lambda)$.
            Then $H \cong H' \monext \N_0^{\card{\Omega} - \card{\Omega'}}$.
            For each $(\vec u, \vec x) \in \cA(H)$,
            \[
              \max\{ \omega(H',\vec u),\, \length{\vec x} \} \;\le\; \omega(H,(\vec u,\vec x)) \;\le\; \omega(H',\vec u) + \length{\vec x} + 1
            \]
            and thus $\omega(H,(\vec u,\vec x)) < \infty$ and $\st(H,(\vec u,\vec x)) < \infty$.
            Moreover, $H$ is a half-factorial \FF-monoid with $\sc(H) = 2$ and $\st(H) = \infty$. Also, $H$ is not a Krull monoid.
      \item If $\card{\Omega} = \infty$, then
        \begin{enumerate}[ref=\arabic{enumi}(\alph*)]
          \item\label{acmf:inf-notff} $\card{\sZ_H(\vec x)} = \infty$ for all $\vec x \in H \setminus (\cA(H) \cup H^\times)$. In particular, $H$ is not a \FF-monoid and is therefore not a submonoid of a free abelian monoid,
          \item\label{acmf:inf-cat}  $\sc_H(\vec x) = 2$ for all $\vec x \in H \setminus (\cA(H) \cup H^\times)$ and
          \item\label{acmf:inf-tame} $\omega(H, \vec u) = \tau(H, \vec u) = \st(H, \vec u) = \infty$ for all $\vec u \in \cA(H)$.
        \end{enumerate}
  \end{enumerate}
\end{proposition}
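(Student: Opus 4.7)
The proof is organized around the length map $\ell\colon H \to \N_0$, $\ell(\vec x) = x_0$. First I would verify that $H$ is saturated in $\N_0^\Omega(\vec c)$: the defining equations $\sum_{i \in I} x_i = C_I \ell(\vec x)$ are $\Z$-linear in the coordinates, so if $\vec x \mid \vec y$ in $\N_0^\Omega(\vec c)$ with $\vec x, \vec y \in H$, the difference $\vec y - \vec x$ again satisfies them. Next I would show that $\ell$ is a transfer homomorphism. Surjectivity follows by exhibiting some $\vec u \in H$ with $\ell(\vec u) = 1$ (distribute $C_I$ across $I$ as non-negative integers for each $I \in \Lambda$, and set $u_i = c_i$ on the generic coordinates, using any non-negative values at the finitely many exceptional indices where $c_i \notin \N$). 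For the splitting axiom, given $\vec x \in H$ with $\ell(\vec x) = k_1 + k_2$, I would distribute the integer sum $(k_1 + k_2) C_I$ across each $I$ as $k_1 C_I + k_2 C_I$ coordinate-wise, and set $y_i = k_1 c_i$, $z_i = k_2 c_i$ on the generic coordinates. Once $\ell$ is a transfer homomorphism into $\N_0$ — which is factorial with the unique atom $1$ — Lemma~\ref{2.2} yields immediately the atom characterization ($\ell(\vec x) = 1$) and half-factoriality.

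The statement for $|\Omega|=1$ is trivial. When $|\Omega|<\infty$ but $\bigcup\Lambda \subsetneq \Omega\setminus\{0\}$, I would split off the coordinates outside $\bigcup\Lambda$: projecting an element of $H$ onto the coordinates in $\bigcup\Lambda \cup \{0\}$ produces an element of $H'$, while the remaining finitely many coordinates form an unconstrained copy of $\N_0^{|\Omega|-|\Omega'|}$. The gluing is precisely $H' \monext \N_0^{|\Omega|-|\Omega'|}$: when $x_0 > 0$ the generic constraints force $x_i = x_0 c_i$ almost everywhere and leave only a finite set of free coordinates, whereas $x_0 = 0$ (corresponding to the identity of $H'$) forces the remaining coordinates to be $0$ as well, so they act as a unit on the right factor. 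All arithmetic consequences then follow from Proposition~\ref{monext}.

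The technical heart is the cyclic case $\bigcup \Lambda = \Omega\setminus\{0\}$. Here $x_0 = (\sum_{i \in I_j} x_i)/C_j$ is determined by any single block sum, so $j\colon H \to \N_0^r$ is an injective divisor homomorphism; upgrading this to a divisor theory requires, for each standard basis vector $\vec e_i$, exhibiting finitely many atoms of $H$ whose projections have coordinate-wise minimum $\vec e_i$, which I would construct by shifting mass within each block $I_j$ while keeping $\vec e_i$ as the common overlap. The class group is then computed via the surjection $\psi\colon \Z^r \to \Z^n$, $\psi(\vec v) = (\sum_{i \in I_j} v_i)_j$: one checks that $\quo(j(H)) = \psi^{-1}(\Z(C_1,\ldots,C_n))$, and hence $G \cong \Z^n/\Z(C_1,\ldots,C_n)$. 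For $n=1$ this gives $\Z/C_1\Z$ directly. For $n \ge 2$ the map $\varphi^*$ is induced by the group homomorphism $\Z^n \to \Z^{n-1}$ whose $i$-th coordinate is $\vec w \mapsto (C_n w_i - C_i w_n)/\gcd(C_i, C_n)$; this vanishes on $\Z(C_1,\ldots,C_n)$, descends to $\varphi^*$, and sends $\vec e_i \mapsto A_i \vec e_i$ for $i < n$ and $\vec e_n \mapsto -\sum_{i<n} B_i \vec e_i$. Two standard basis vectors of $\Z^r$ map to the same class in $G$ precisely when their indices lie in the same block, yielding the prime-divisor count $|I_j|$ per class.

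For the remaining items, the strategy combines the transfer homomorphism $\ell$ with direct combinatorial arguments. Since $\sc(\N_0) = 0$, Lemma~\ref{2.2}.3(b) reduces the catenary bound $\sc(H) \le 2$ to interpolating any two length-$k$ factorizations of the same element by a $2$-exchange that shifts a unit of block-mass between two atoms; the factorial criterion $|\Lambda|=1, C_1=1$ corresponds to atoms being forced to have exactly one non-zero entry in the single block, yielding the prime atoms $\vec e_0 + \vec e_i$, whereas any other configuration admits two distinct factorizations of a suitable element. The upper bound $\omega(H,\vec u) \le \sum_I C_I$ arises because covering $\vec u$ coordinate-wise by atoms requires at most $C_I$ atoms per block $I$ (each atom contributes total mass $C_I$ to $I$), and the lower bound is witnessed by products of atoms whose supports on $I$ avoid the coordinate attaining $\min_{i\in I} u_i$. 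For $|\Omega| = \infty$, the key observation is that any non-atom admits infinitely many atom-summands obtained by permuting mass among infinitely many indices outside a finite exceptional set, yielding $|\sZ(\vec x)| = \infty$ and $\omega(H,\vec u) = \infty$; the bound $\sc_H = 2$ still holds via the $2$-exchange argument. The main obstacle throughout is the combinatorial bookkeeping in the cyclic case — establishing that $j$ is a divisor theory, verifying the description of $\varphi^*(G_P)$ with the correct prime-divisor counts, and sharpening the $\omega$-upper bound to the tame-degree equality $\st(H) = \omega(H) = \sum_I C_I$.
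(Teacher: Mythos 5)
Your proposal follows essentially the same route as the paper: saturation via $\Z$-linearity of the block constraints; $\ell$ as a transfer homomorphism (giving half-factoriality and the atom criterion); the divisor theory $j\colon H\hookrightarrow\N_0^r$ in the cyclic case; the splitting $H\cong H'\monext\N_0^{\card{\Omega}-\card{\Omega'}}$ together with Proposition~\ref{monext} in the finite generic case; and ad hoc combinatorial arguments for the $\omega$-, $\tau$- and tame invariants and the catenary degree, including the infinite case. Your one expositional departure is in the class group computation: you factor through the block-sum epimorphism $\psi\colon\Z^r\to\Z^n$, obtaining $G\cong\Z^n/\Z(C_1,\ldots,C_n)$ as an intermediate statement, and then compose with $\Z^n\to\Z^{n-1}$, $\vec w\mapsto\bigl((C_n w_i - C_i w_n)/\gcd(C_i,C_n)\bigr)_{i\in[1,n-1]}$, which recovers exactly the paper's $\varphi^*$. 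This intermediate step is a genuine gain in clarity over defining $\varphi^*$ directly on $\Z^r$.

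That said, your own intermediate formula exposes a point that both you and the paper slide past when asserting $\varphi^*$ is a monomorphism: $\Z^n/\Z(C_1,\ldots,C_n)\cong\Z^{n-1}\oplus\Z/\gcd(C_1,\ldots,C_n)\Z$, so $G$ has torsion as soon as $\gcd(C_1,\ldots,C_n)>1$, and then no injection into the torsion-free group $\Z^{n-1}$ can exist. Concretely, take $n=2$, $C_1=2$, $C_2=4$ (say $\vec c=(1,1,1,2,2)$, $I_1=\{1,2\}$, $I_2=\{3,4\}$): the vector $(1,0,2,0)\in\Z^4$ lies in $\ker(\varphi^*)$ since $A_1\cdot 1-B_1\cdot 2=2-2=0$, but not in $\quo(j(H))$, because $v_1+v_2=1$ is not a multiple of $C_1=2$; thus $\ker(\varphi^*)\supsetneq\quo(j(H))$ and $G\cong\Z\oplus\Z/2\Z$ does not embed into $\Z$. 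When you ``descend to $\varphi^*$'' you must check that the kernel of $\Z^n\to\Z^{n-1}$ is exactly $\Z(C_1,\ldots,C_n)$, and this holds only when $\gcd(C_1,\ldots,C_n)=1$ --- a hypothesis not present in Definition~\ref{definition:almost-constant-monoid}. Without it, the clean unconditional statement is the one your factoring gives directly: $G\cong\Z^n/\Z(C_1,\ldots,C_n)$, with $G_P=\{\vec e_j+\Z(C_1,\ldots,C_n):j\in[1,n]\}$ and $\card{I_j}$ prime divisors in the $j$-th class.
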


\begin{proof}
Suppose that $\vec x$, $\vec y \in H$ such that $\vec x$ divides $\vec y$ in $\N_0^\Omega(\vec c)$. That is, $\vec y - \vec x \in \N_0^\Omega(\vec c)$. For each $I \in \Lambda$, we have $\sum_{i \in I} y_i - x_i = C_I (\ell(\vec y) - \ell(\vec x)) = C_I \ell(\vec{y - x})$, and thus $\vec y - \vec x \in H$. Therefore the inclusion $H \subset \N_0^\Omega(\vec c)$ is saturated. We now show that $\ell\colon H \to \N_0$ is a transfer homomorphism. Clearly this map is surjective and, since $H$ is reduced, it will suffice to show: If $\vec x \in H$ and $\ell(\vec x) = k + l$ with $k$, $l \in \N_0$, then there exist $\vec y$, $\vec z \in H$ such that $\vec x = \vec y + \vec z$ and $\ell(\vec y) = k$, $\ell(\vec z) = l$.

If one of $k$ or $l$ is $0$ we may, without restriction, assume that $l=0$. Then $\vec y = \vec x$ and $\vec z = \vec 0$ give the result. From now on we assume that $k$, $l> 0$. Let $\Lambda' \subset \Lambda$ consist of those finitely many $I \in \Lambda$ with $I \cap \supp(\vec x - \ell(\vec x) \vec c) \ne \emptyset$. For $i \in \bigcup \Lambda'$, let $y_i'$, $z_i' \in \N_0$ be such that $y_i' + z_i' = x_i$ and such that, for all $I \in \Lambda'$, it holds that $\sum_{i \in I} y_i' = C_I k$ and $\sum_{i \in I} z_i' = C_I l$.
Define $\vec y$ and $\vec z \in \N_0^\Omega$ by
  \begin{align*}
    y_i &= \begin{cases}
               y_i'   &\quad\text{if $i \in \bigcup \Lambda'$,} \\
               k c_i  &\quad\text{if $i \in \Omega \setminus \bigcup \Lambda'$,}
           \end{cases}
     & z_i &=  \begin{cases}
              z_i'   &\quad\text{if $i \in \bigcup \Lambda'$,} \\
              l c_i  &\quad\text{if $i \in \Omega \setminus \bigcup \Lambda'$.}
           \end{cases}
  \end{align*}
  Then $\vec y$, $\vec z \in \N_0^\Omega(\vec c, \Lambda)$ with $\ell(\vec y) = k$, $\ell(\vec z) = l$, and $\vec x = \vec y + \vec z$ as required.

\smallskip
Statement \ref*{acmf:1} is clear and thus we suppose that $2 \le \card{\Omega} < \infty$ and $\Omega \setminus \{0\} = \bigcup \Lambda$.

We first verify statement  \ref*{acmf:cyc-krull}. Since $\sum_{i \in I_1} x_i = C_1 x_0$ and $C_1 \ne 0$, $j$ is injective. We now check that $j$ is a divisor homomorphism. Let $\vec x$, $\vec y \in H$ with $j(\vec x) \le j(\vec y)$. If $\vec x = \vec y$, there is nothing to show. If $\vec x \ne \vec y$, then there necessarily exists $I \in \Lambda$ with $\sum_{i \in I} x_i < \sum_{i \in I} y_i$ and hence $x_0 < y_0$. Consequently, $\vec x$ divides $\vec y$ in $H$ and thus $j$ is a divisor homomorphism. To prove that $j$ is a divisor theory, we need to show that each standard basis vector $\vec e_i \in \N_0^r$ is the greatest common divisor of a finite, non-empty set in the image of $j$. Let $i \in [1,r]$, let $I_0 \in \Lambda$ be such that $i \in I_0$, and let $i' \in I_0 \setminus \{i\}$. Define
  \begin{align*}
    \vec x &= \vec e_0 + \vec e_i + (C_{I_0} - 1)\vec e_{i'} + \sum_{I \in \Lambda\setminus\{I_0\}} C_I \vec e_{\min I}, \text{ and} \\
    \vec y &= \vec e_0 + C_{I_0} \vec e_i                    + \sum_{I \in \Lambda\setminus\{I_0\}} C_I \vec e_{\max I}.
  \end{align*}
  Then $\vec x$, $\vec y \in H$ and, recalling that $\card{I} \ge 2$ for all $I \in \Lambda$, $\gcd_{(\N_0^r,+)}(j(\vec x), j(\vec y)) = \vec e_i$. Thus $j \colon H \hookrightarrow \N_0^r$ is a divisor theory, $H$ is a Krull monoid, and the class group of $H$ is $\quo(\N_0^r) / \quo(j(H))  = \Z^r/\quo(j(H))$.

\smallskip
 We now determine the structure of the divisor class group of $H$ and determine the set of classes containing prime divisors. Suppose that $n=1$ and define $\varphi^*\colon \Z^r \to \Z/C_1\Z$ by $\vec x \mapsto \sum_{i=1}^r x_i + C_1\Z$. We see immediately that $\varphi^*$ is an epimorphism and $\ker(\varphi^*) = \quo(j(H))$ follows easily. Therefore $G \cong \Z/C_1\Z$. Since $\varphi^*(\vec e_i) = 1 + C_1\Z$ for all $i \in [1,r]$, $G_P$ is as claimed. Now suppose that $n \ge 2$ and let $\varphi^*\colon \Z^r \to \Z^{n-1}$ be defined by
  \[
    \vec x \mapsto (A_1 \sum_{i \in I_1} x_i - B_1 \sum_{i \in I_n} x_i,\;\; A_2 \sum_{i \in I_2} x_i - B_2 \sum_{i \in I_n} x_i ,\;\; \ldots,\;\; A_{n-1} \sum_{i \in I_{n-1}} x_i - B_{n-1} \sum_{i \in I_{n}} x_i).
  \]
One easily checks that $\ker(\varphi^*) = \quo(j(H))$, showing that $G$ embeds into $\Z^{n-1}$ via $\varphi^*$. Considering the images of $\vec e_1,\ldots, \vec e_r$ under $\varphi^*$, the description of $G_P$ given in statement \ref*{acmf:cyc-krull} follows, and we see that $\varphi^*(G)$ is a subgroup of full rank of $\Z^{n-1}$.

\smallskip
For statement  \ref*{acmf:cyc-atoms}, note that an element $\vec u \in H$ is an atom if and only if $\ell(\vec u) = 1$, and this is the case if and only if $\sum_{i \in I} u_i = C_I$ for all (equivalently, any) $I \in \Lambda$.

\smallskip
Consider statement  \ref*{acmf:cyc-cat}. By Lemma \cref{some-zss}, $\mathcal B (G_P)$ is factorial and hence $\mathsf c (G_P) = 0$. Thus Proposition \ref{3.1} implies that $\sc(H) \le \max\{ 2,\, \sc( G_P) \} \le 2$. If $\card{\Lambda} = 1$ and $C_1 = 1$, then $j$ is surjective. Thus $H \cong \N_0^r$, showing that $H$ is factorial. The prime elements of $(\N_0^r,+)$ are simply the standard basis vectors $\vec e_i$ for $i \in [1,r]$, and their preimages under $j$ are precisely the elements $\vec e_0 + \vec e_i \in H$ for $i \in [1,r]$. If $\card{\Lambda} > 1$ or $C_1 > 1$, then no atom is prime and thus $H$ is not factorial. Let $\vec u \in \cA(H)$. The lower bound given in \ref*{acmf:cyc-omega}, which we will soon verify, implies that $\omega(H,\vec u) \ge 2$ unless $\card{\Lambda}=1$, $C_1=2$, and $u_1=u_2=1$. Note that $\min_{i \in I} u_i \le \lfloor C_I/2 \rfloor$ for each $I \in \Lambda$ and thus, in this case, $\vec u$ is not a prime element. In the remaining case, one easily checks that again, $\vec u$ is not a prime element.

\smallskip
We now verify the bounds on the omega invariant as given in  \ref*{acmf:cyc-omega}. If $\card{\Lambda} = 1$ and $C_1 = 1$, then $H$ is factorial and the inequalities hold trivially. We assume from now on that this is not the case and hence $\sum_{I \in \Lambda} C_I \ge 2$. We first show that $\omega(H,\vec u) \le \sum_{I \in \Lambda} C_I$.
Let $k \in \N_{\ge 2}$ and let $\vec v_1,\ldots, \vec v_k \in \cA(H)$ be such that $\vec u$ divides $\sum_{i=1}^k \vec v_i$. If $J \subset [1,k]$, then $\vec u$ divides $\sum_{j \in J} \vec v_j$ if and only if $\vec u \le \sum_{j \in J} \vec v_j$. Since $\vec u \le \sum_{i=1}^k \vec v_i$ and $\sum_{i \in \Omega\setminus\{0\}} u_i = \sum_{I \in \Lambda} C_I$, we can recursively construct a subset $J \subset [1,k]$ of size at most $\sum_{I \in \Lambda} C_I$ such that $\vec u \le \sum_{j \in J} \vec v_j$. This is done by adding, in each step, a vector $\vec v_j$ with $v_{j,i} > 0$ for some $i \in \Omega\setminus \{0\}$ for which $u_i < \sum_{j \in J} v_{j,i}$.

We now prove the lower bound. By renumbering the coordinates if necessary, we may assume $u_{\min I} = \min_{i \in I} u_i$ for all $I \in \Lambda$. For $i \in \Omega$, let $I \in \Lambda$ be the unique set containing $i$ and define
  \[
    \vec v_i = \vec e_0 + \vec e_i + (C_I - 1) \vec e_{\min I} \;+ \sum_{I' \in \Lambda\setminus\{\,I\,\}} C_{I'} \vec e_{\min I'} \;\in\; \cA(H).
  \]
Since $\vec u = \sum_{i \in \Omega} u_i \vec e_i = \sum_{I \in \Lambda} \sum_{i \in I} u_i \vec e_i$, it is clear that $\vec u$ divides $\sum_{I \in \Lambda} \sum_{i \in I \setminus \{ \min I \}} u_i \vec v_i$. For $I \in \Lambda$ with $C_I > 1$, this is clear from the definition of the $\vec v_i$'s. If $C_I=1$, then $u_{\min I} = 0$ since $\card{I} \ge 2$, yet $\vec u$ does not divide any proper subsum. Therefore
  \[
    \omega(H,\vec u) \ge \sum_{I \in \Lambda} \Big[ \big(\sum_{i \in I} u_i\big) - \min_{i \in I} u_i \Big]= \sum_{I \in \Lambda} (C_I - \min_{i\in I} u_i).
  \]

\smallskip
We now prove statement \ref*{acmf:cyc-tame}. By definition, $\omega(H) = \sup_{\vec u \in \cA(H)} \omega(H,\vec u)$. Since $H$ is half-factorial, $\st(H) = \omega(H)$.
The element
  \[
    \vec u = \vec e_0 + \sum_{I \in \Lambda} C_I \vec e_{\min I} \in H
  \]
  is an atom and the bounds in \ref*{acmf:cyc-omega} imply that $\omega(H,\vec u) = \sum_{I \in \Lambda} C_I$. Therefore $\omega(H) \ge \sum_{I \in \Lambda} C_I$ and the upper bound again follows from \ref*{acmf:cyc-omega}.

  \smallskip
Consider statement  \ref*{acmf:fin-generic} and let $D = \N_0^{\card{\Omega} - \card{\Omega_0}}$. The isomorphism $H \cong H' \monext D$ is immediate from the definitions.
Because $D$ is factorial, we have $\omega(D, \vec x) = \length{\vec x}$ for all $\vec x \in D$. \Subref{monext:cat} together with \ref*{acmf:cyc-cat} implies that $\sc(H) = 2$, and \Subref{monext:omega} implies the remaining inequalities. As a submonoid of a free abelian monoid, $H$ is an \FF-monoid (\cite[Corollary 1.5.7]{Ge-HK06a}). However, since it is not completely integrally closed in its quotient group (by \Subref{monext:nocic}), it cannot be a Krull monoid.

\smallskip
We suppose for the remainder of the proof that $\card{\Omega} = \infty$. For notational ease we assume $\N_0 \subset \Omega$.

  \smallskip
Consider statement \ref*{acmf:inf-notff}. To show $\card{\sZ_H(\vec x)} = \infty$ it suffices to show that there are infinitely many atoms of $H$ dividing $\vec x$. By definition of $H$, there exists a finite subset $\Omega' \subset \Omega$ having the property that $I \cap \Omega' \ne \emptyset$ already implies $I \subset \Omega'$ for all $I \in \Lambda$ and such that for each $i \in \Omega \setminus \Omega'$ it holds that $x_i = \ell(x) c_i$ and $c_i \in \N$. We may assume that $\N_0 \subset \Omega \setminus \Omega'$. Moreover, we may assume that for each $I \in \Lambda$ we have $\card{I \cap \N} \le 1$ and, if $i \in I \cap \N$, then $c_i = \min_{i' \in I} c_{i'}$.

For each $i \in \N$, define $\vec u_i \in \N_0^\Omega$ as follows: If there exists $I \in \Lambda$ with $i \in I$, then let $i' \in I \setminus \{i\}$. Note also that $i' \in \Omega \setminus \Omega'$ by the choice of $\Omega'$. We set $u_{i,i} = 0$, $u_{i,i'} = c_i + c_{i'}$, and $u_{i,j} = c_j$ for each $j \in \Omega \setminus (\Omega' \cup \{i,\, i'\})$.
On the other hand, if $i$ is not contained in any $I \in \Lambda$, we set $u_{i,i} = 0$ and  $u_{i,j} = c_j$ for each $j \in \Omega \setminus (\Omega' \cup \{i\}) $.
In either case, we can choose $u_{i,j} \le x_{i,j}$ for each $j \in \Omega'$ such that $\sum_{j \in I} u_{i,j} = C_I$ for all $I \in \Lambda$ with $I \subset \Omega'$.
Therefore $\sum_{j \in I} u_{i,j} = C_I$ for all $I \in \Lambda$ and $u_{i,j} = c_j$ for all but finitely many $j \in \Omega$. Therefore $\vec u_i \in \cA(H)$. By construction, $\vec u_i \le \vec x$. In the first case, due to the minimal choice of $c_i$, we have $u_{i,i'} = c_i + c_{i'} \le 2c_{i'} \le \ell(\vec x) c_{i'} = x_{i'}$, with the last equality holding since $i' \in \Omega\setminus \Omega'$. Since $\ell(\vec u_i) = 1 < \ell(\vec x)$, $\vec u_i$ divides $\vec x$. Clearly these atoms are pairwise distinct. By what we have just shown, $H$ is not an \FF-monoid and therefore cannot a submonoid of a free abelian monoid by \cite[Corollary 1.5.7]{Ge-HK06a}.

We now compute the catenary degree of $H$ as stated in  \ref*{acmf:inf-cat}. From \ref*{acmf:inf-notff}, every nonzero, non-atom element has at least two distinct factorizations and hence $\sc_H(\vec x) \geq 2$ for all $\vec x \in H$. We now show that $\sc_H(\vec x) \leq 2$ by projecting to the finite case. Suppose that $\vec x = \vec u_1 + \cdots + \vec u_k = \vec v_1 + \cdots + \vec v_k$ for some $k \in \N_{\ge 2}$ and that $\vec u_1,\ldots, \vec u_k$, $\vec v_1,\ldots,\vec v_k \in \cA(H)$. Let $\Omega' \subset \Omega$ be the smallest subset containing
  \[
    \big\{ j \in \Omega : u_{i,j} \ne c_j\text{ or }v_{i,j} \ne c_j\text{ for some $i \in [1,k]$} \big\} \cup \{0\}
  \]
  and having the property that whenever $I \cap \Omega' \ne \emptyset$ for $I \in \Lambda$, then already $I \subset \Omega'$.
  Observe that $\Omega'$ is finite and that $\vec u_{i,j} = \vec v_{i',j} = c_j$ for all $i$, $i' \in [1,k]$ and $j \in \Omega\setminus \Omega'$.
  Set $\Lambda' = \{ I \in \Lambda : I \subset \Omega' \}$ and let $\vec c' \in \N_0^{\Omega'}$ be defined by $c_i' = c_i$ for all $i \in \Omega'$.
Define $H' = \N_0^{\Omega'}(\vec c', \Lambda')$ and note that there is a canonical projection $\pi\colon H \to H'$. By the finiteness of $\Omega'$ we immediately have that $\sc(H') \le 2$. Thus there exists a sequence of factorizations $z_1',\ldots,z_l' \in \sZ(H')$ of $\pi(\vec x)$ with $z_1'=\pi(\vec u_1)\cdot\ldots\cdot\pi(\vec u_k)$,\ $z_l'=\pi(\vec v_1)\cdot\ldots\cdot\pi(\vec v_k)$ and such that $\mathsf d(z_i',z_{i+1}') \le 2$ for all $i \in [1,l-1]$. For any factor $\vec w'$ occurring in some $z_j'$, we lift it to $\vec w \in H$ by setting $w_{j} = u_{1,j}$ for all $j \in \Omega \setminus \Omega'$. Then $\vec w \in \cA(H)$ and we can lift the factorizations $z_1',\ldots,z_l'$ of $\pi(\vec x)$ to factorizations $z_1,\ldots,z_l$ of $\vec x$. These factorizations form a sequence connecting the two factorization of $\vec x$ that we began with and have the property that $\sd(z_i,z_{i+1}) \le 2$ for all $i \in [1,l-1]$.

\smallskip
Finally, we  consider the infinitude of the invariants given in \ref*{acmf:inf-tame}. Let $\vec u \in \cA(H)$ and $k \in \N_{\ge 2}$. We will show that $\omega(H,\vec u) \ge k$, as the other invariants are then equal to $\omega(H,u)$ by half-factoriality. Note that $\vec u$ is necessarily non-zero in all but finitely many coordinates and so we may assume $u_i \ne 0$ for $i\in[1,k]$. Moreover, since $\Omega$ is infinite and each $I \in \Lambda$ is finite, we may suppose that no two $i$, $j \in [1,k]$ belong to the same set $I \in \Lambda$. (For the following argument it would suffice that $I \not \subset [1,k]$.) Modifying finitely many coordinates of $\vec u$ as needed, we can therefore construct $\vec u_1,\ldots, \vec u_k \in \cA(H)$ such that $u_{i,j} = u_j \delta_{i,j}$ for all $i,j \in [1,k]$, and $u_{i,j} \ge u_j$ for all $i \in [1,k]$ and $j \in \Omega \setminus [1,k]$. Then $\vec u$ divides $\vec u_1 + \cdots + \vec u_k$, but for all $j \in [1,k]$, $u_{j} > u_{1,j} + \cdots + u_{k,j} - u_{j,j} = 0$, showing that $\vec u$ divides no proper subsum.
\end{proof}

\medskip
Now with the appropriate arithmetical results, we apply Propositions \ref{monext} and \ref{acm-factorization} to study direct-sum decompositions of modules over HNP rings. For a hereditary Noetherian prime ring (HNP ring) $R$, we consider the class $\cC$ of finitely generated right $R$-modules. In the noncommutative setting there are two invariants (the \emph{genus} and the \emph{Steinitz class}) which describe the stable isomorphism class of a finitely generated projective right $R$-module $P$. In general, however, the isomorphism class of $P$ is determined by its stable isomorphism class only if $\udim(P) \ge 2$ and such a module is indecomposable if and only if $\udim(P) = 1$. Thus the forthcoming description of the direct-sum decomposition of finitely generated projective right $R$-modules is one where the indecomposable factors are determined up to stable isomorphism. If $R$ has the additional property that any two finitely generated projective right $R$-modules that are stably isomorphic are already isomorphic, then this result is a description up to isomorphism.

\medskip
Let $V$ and $W$ be two simple right $R$-modules. Then $W$ is called a \emph{successor} of $V$ and $V$ is called a \emph{predecessor} of $W$ if $\Ext_R^1(V,W) \ne \vec 0$. Let $\cW$ be a set of representatives (of isomorphism classes) of the simple unfaithful right $R$-modules.
We note that every $V \in \cW$ is contained in a unique \emph{tower} of $R$ (\cite[\S19]{Le-Ro11a}).
A \emph{tower} $\cT$ is a finite set of simple right $R$-modules, ordered with respect to the successor relationship, and having the following structure: Every tower  $\cT$ is either cyclically ordered and each simple module in $\cT$ is unfaithful, in which case we say that $\cT$ is a \emph{cycle tower}, or $\cT$ is linearly ordered and only the first module (the only module in the tower not having a predecessor) is faithful, in which case $\cT$ is said to be a \emph{faithful tower}.
The length of a tower is the number of distinct modules contained in it, and a tower is \emph{non-trivial} if it contains more than one module.

\medskip
We briefly recall the notions of rank, genus and the Steinitz class of a finitely generated projective right $R$-module, as these invariants are used to describe stable isomorphism classes of finitely generated projective right $R$-modules. We refer the reader to \cite[\S33 and \S35]{Le-Ro11a} for additional details. Let $P$ be a finitely generated projective right $R$-module and let $V \in \cW$. Then $M = \ann(V)$ is a maximal ideal of $R$, $R/M$ is simple artinian, and the \emph{rank of $M$ at $V$}, denoted by $\rho(P, V) \in \N_0$, is defined to be the length of the $R/M$-module $P/PM$. If $\mathcal T$ is a cycle tower, we set $\rho(P, \mathcal T) = \sum_{V \in \mathcal T} \rho(P,V)$. Let $\modspec(R)$ denote the set of isomorphism classes of all unfaithful simple right $R$-modules together with the trivial module $\vec 0$.
For a finitely generated projective right $R$-module $P$, we set $\Psi_V(P) = \rho(P,V)$ if $V$ is an unfaithful simple right $R$-module, and $\Psi_{\vec 0}(P) = \udim(P)$.
Then
\[
  \Psi(P) = (\Psi_{V}(P))_{V \in \modspec(R)} \in \N_0^{\modspec(R)}
\]
is called the \emph{genus} of $P$. Two finitely generated projective right $R$-modules $P$ and $Q$ are \emph{stably isomorphic} if there exists a finitely generated projective right $R$-module $X$ such that $P \oplus X \cong Q \oplus X$. We denote by $[P]$ the stable isomorphism class of $P$. The direct sum operation on modules induces the structure of a commutative semigroup on the set of stable isomorphism classes, and by $\mathbf K_0(R)$ we denote its quotient group. The genus $\Psi$ induces a homomorphism $\Psi^+\colon \mathbf K_0(R) \to \Z^{\modspec(R)}$, and $G(R) = \Ker(\Psi^+)$ is called the \emph{ideal class group of $R$}. By choosing a base point set $\cB$ of non-zero finitely generated projective right $R$-modules consisting of exactly one module in each genus, and such that $\cB$ is closed, up to isomorphism, under direct sums, we can associate to any nonzero finitely generated projective right $R$-module $P$ a class $\mathcal S(P) = [P] - [B] \in G(R)$, where $B \in \cB$. We call $\mathcal S(P)$ the \emph{Steinitz class} of $P$ and we set $\mathcal S(\vec 0) = \vec 0$.

\medskip
We are now ready to characterize the semigroup of stable isomorphism classes of finitely generated projective right modules over an HNP ring. We note that the characterization of factoriality in Theorem \ref{hnp-proj} was already obtained by Levy and Robson in \cite[Theorem 39.5]{Le-Ro11a}.

\smallskip
\begin{theorem} \label{hnp-proj}
  Let $R$ be an HNP ring and let $H$ be the semigroup of stable isomorphism classes of finitely generated projective right $R$-modules with operation induced by the direct sum of modules.
  \begin{enumerate}
  \item
  Let $\Omega$ denote the set of isomorphism classes of all unfaithful simple right $R$-modules which are contained in a non-trivial tower, together with the trivial module, denoted by $\vec 0$.
  For $V \in \Omega \setminus \{\vec 0\}$, let $c_V = \frac{\rho(R,V)}{\udim(R)} \in \Q_{> 0}$ and $c_{\vec 0} = 1$.
  Finally, let $\Lambda$ be the set of all non-trivial cycle towers.
  Then
  \[
    H \cong \N_0^\Omega(\vec c, \Lambda) \monext G(R)
  \]
  with the product $\monext$ as defined in Proposition \ref{monext}.
  In particular, $H$ is half-factorial, $\sc(H) \le 2$ and the following are equivalent{\rm \,:}
  \begin{enumerate}
    \enumequiv
    \item $H$ is factorial (i.e., $R$ satisfies stable uniqueness).
    \item $G (R) = \vec 0$ and either $R$ has no non-trivial towers (i.e., $R$ is a Dedekind prime ring), or $R$ has a unique non-trivial tower $\mathcal T$ which is a cycle tower and $\rho(R, \mathcal T) = \udim(R)$.
  \end{enumerate}

  \smallskip
\item If $R$ has infinitely many non-trivial towers, then $\st(H,[U]) = \omega(H,[U]) = \infty$ for each indecomposable finitely generated projective right $R$-module $U$.

  \smallskip
  \item
  Suppose  that $R$ has only finitely many non-trivial towers.
  Then $\st(H,[U])$, $\omega(H,[U]) < \infty$ for each indecomposable finitely generated projective right $R$-module $U$.
  If $R$ has at least one non-trivial faithful tower, then $\st(H) = \omega(H) = \infty$.
  If $R$ has no non-trivial faithful tower, only finitely many non-trivial cycle towers $\cT_1, \ldots, \cT_n$ with $n \in \N_0$, and $H$ is not factorial, then
  \[
    \st(H) = \omega(H) = \frac{1}{\udim(R)} \sum_{i=1}^n \rho(R,\cT_i).
  \]
  \end{enumerate}
\end{theorem}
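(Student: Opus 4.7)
The plan is to first establish the semigroup isomorphism
\[
  H \cong \N_0^\Omega(\vec c, \Lambda) \monext G(R),
\]
which is the main module-theoretic step based on Levy-Robson's parametrization of stable isomorphism classes over HNP rings. Once this identification is in hand, all arithmetical statements of (1), (2) and (3) follow by combining Propositions \ref{monext} and \ref{acm-factorization}.

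For the isomorphism I would begin by recalling that the stable isomorphism class of a finitely generated projective right $R$-module $P$ is uniquely determined by the pair $(\Psi(P), \mathcal{S}(P)) \in \N_0^{\modspec(R)} \times G(R)$, so the induced map $H \to \N_0^{\modspec(R)} \times G(R)$ is injective. The image is identified by pulling in three structural inputs from Levy-Robson: (i) for all but finitely many simple unfaithful right $R$-modules $V$, one has $\rho(P, V) = c_V \udim(P)$ with $c_V = \rho(R, V)/\udim(R)$, and in particular this equality holds for every $V$ contained in a trivial tower; (ii) for each non-trivial cycle tower $\cT$ the ranks satisfy the summation constraint $\sum_{V \in \cT} \rho(P, V) = C_\cT \udim(P)$; (iii) for non-trivial faithful towers no such summation constraint is imposed, and each rank may deviate independently from the baseline $c_V \udim(P)$ in only finitely many places. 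Restricting the genus coordinates to $V \in \Omega \setminus \{\vec 0\}$ (those in non-trivial towers) and adjoining $\udim(P)$ in the $\vec 0$-coordinate recovers exactly $\N_0^\Omega(\vec c, \Lambda)$. The zero module being the unique module with $\udim = 0$ (and Steinitz class $\vec 0$) produces the $\monext$ structure over $G(R)$.

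With the isomorphism established, part (1) is immediate: Proposition \ref{acm-factorization} shows that $H_0 = \N_0^\Omega(\vec c, \Lambda)$ is half-factorial with $\sc(H_0) \le 2$ via the length transfer homomorphism $\ell$ to $\N_0$, and \Subref{monext:transfer} and \Subref{monext:cat} promote these properties to $H$. The equivalence (a)$\Leftrightarrow$(b) combines the factoriality criterion \Subref{acmf:cyc-cat} for $H_0$ with \Subref{monext:units}--\Subref{monext:nocic}, which force $G(R) = \vec 0$ whenever $H$ is factorial and $H_0$ is non-trivial; a non-trivial faithful tower is excluded in the factorial case because \Subref{acmf:fin-generic} shows $H_0$ is then not even a Krull monoid, hence not factorial.

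For (2), when $R$ has infinitely many non-trivial towers one has $|\Omega| = \infty$, so \Subref{acmf:inf-tame} yields $\omega(H_0, \vec u) = \st(H_0, \vec u) = \infty$ for every atom $\vec u$, and \Subref{monext:group} transfers these infinities to $H$. For (3), $|\Omega|$ is finite and splits into two sub-cases: if every non-trivial tower is a cycle then $\bigcup \Lambda = \Omega \setminus \{\vec 0\}$ and \Subref{acmf:cyc-tame} gives $\omega(H_0) = \st(H_0) = \sum_{I \in \Lambda} C_I = \frac{1}{\udim(R)} \sum_{i=1}^n \rho(R, \cT_i)$, which lifts to $H$ via \Subref{monext:group}; otherwise a non-trivial faithful tower exists and \Subref{acmf:fin-generic} yields $\st(H_0) = \infty$ (hence $\st(H) = \infty$) while keeping $\omega(H_0, \vec u), \st(H_0, \vec u) < \infty$ for each individual atom---whence the same for atoms of $H$ by \Subref{monext:group}. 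The main obstacle is the module-theoretic identification step: one must carefully translate Levy-Robson's description of which tuples of ranks and Steinitz classes are realized by finitely generated projective right $R$-modules into the abstract constraints defining $\N_0^\Omega(\vec c, \Lambda)$, and verify that direct sum of modules corresponds to coordinate-wise addition subject to the $\monext$ compatibility. Once this dictionary is in place, the arithmetical statements are extracted mechanically from the abstract propositions.
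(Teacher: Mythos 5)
Your proposal follows the same two-step strategy as the paper: establish the isomorphism $H \cong \N_0^\Omega(\vec c, \Lambda) \monext G(R)$ by combining the injectivity of $[P]\mapsto(\Psi(P),\mathcal S(P))$ with Levy--Robson's realization theorem (which identifies the image as exactly those tuples subject to the standard-rank constraints at cycle towers and the almost-constant-rank condition), and then read off the arithmetical invariants from Propositions \ref{monext} and \ref{acm-factorization} case by case according to the number and type of non-trivial towers. The case split you make in parts (2) and (3) -- using \ref{acmf:inf-tame} for infinitely many towers, \ref{acmf:fin-generic} when a non-trivial faithful tower is present, and \ref{acmf:cyc-tame} when only cycle towers occur, each time promoting the statement to $H$ via \ref{monext:group} -- is exactly the intended application.
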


\begin{proof}
It will suffice to establish that $H \cong \N_0^\Omega(\vec c, \Lambda) \monext G(R)$ as the remaining claims will then follow from Proposition \ref{acm-factorization}. The genus and the Steinitz class are both additive on direct sums and thus give rise to a monoid homomorphism $H \to \N_0^{\modspec(R)} \propto G(R),\; [P] \mapsto (\Psi(P), \mathcal S(P))$. We note that $\mathcal S(\vec 0) = \vec 0$ and that $\vec 0$ is the only module of uniform dimension zero. The genus satisfies a number of necessary conditions, namely that $\Psi_V(P) = c_V \Psi_0(P)$ for almost all $V \in \modspec(R)$ and that it has standard rank at every cycle tower, that is, $\rho(P,\mathcal T) = \sum_{V \in \mathcal T} c_V \Psi_0(P)$ for all cycle towers $\mathcal T$. In particular, if $V$ is contained in a trivial tower, then this is necessary a cycle tower and hence implies $\Psi_V(P) = c_V \Psi_0(P)$. Thus instead of $\Psi(P)$ we may consider $\Psi'(P) \subset \N_0^\Omega$ were we omit the components corresponding to unfaithful simple right $R$-modules that are contained in a trivial tower. We obtain a homomorphism
  \[
    \Phi\colon H \to \N_0^{\Omega}(\vec c, \Lambda) \propto G(R),\; [P] \mapsto (\Psi'(P), \mathcal S(P)).
  \]
The main theorem of Levy and Robson (\cite[Theorem 35.13]{Le-Ro11a}) implies that the genus and the Steinitz class are independent invariants, and that up to the stated conditions on the rank, all values can be obtained. In other words, $\Phi$ is an isomorphism.
\end{proof}

\medskip
\begin{remarks}
\begin{remenumerate}
\item If $R$ is such that each two stably isomorphic right $R$-modules are isomorphic, then $H = \mathcal V(\mathcal C_\proj)$. If this is not the case, then $H$ still provides information on direct sum decompositions of finitely generated projective modules with the summands determined up to stable isomorphism. Specifically, we have the following: Clearly, if $P = U_1 \oplus \cdots \oplus U_k$ for some $k \in \N$ and indecomposable right $R$-modules $U_1, \ldots, U_k$, then $[P] = [U_1] + \cdots + [U_k]$ is a factorization of $[P]$ in $H$. Let $P$ be a right $R$-module with $\udim(P) \ge 2$ (that is, $P$ is neither the zero module nor indecomposable). If $[P] = [U_1] + \cdots + [U_k]$ for some $k \in \N_{\ge 2}$ and atoms $[U_1], \ldots, [U_k]$ of $H$, then $[P] = [U_1 \oplus \cdots \oplus U_k]$ and, since $\udim(P) \ge 2$, \cite[Theorem 34.6]{Le-Ro11a} implies that $P \cong U_1 \oplus \cdots \oplus U_k$. Therefore, a factorization of $[P]$ in $H$ gives rise to one of $P$ into indecomposables, with the stable isomorphism classes of the indecomposable summands determined by the factorization of $[P]$ in $H$.

\smallskip
\item An HNP ring $R$ has finitely many non-trivial towers if and only if it is a multichain idealizer from a Dedekind prime ring $S$ (\cite[Proposition 30.5]{Le-Ro11a}). A sufficient (but not necessary) condition for there to be no non-trivial faithful towers is for $R$ to be right (equivalently, left) bounded (\cite[Lemma 18.2]{Le-Ro11a}).

\smallskip
\item Let $R$ be an HNP ring and let $\mathcal C$ denote the class of finitely generated right $R$-modules. Then $\cV(\cC) \cong \cV(\cC_{\tor}) \times \cV(\cC_{\proj})$ and $\cV(\cC_{\tor})$ is factorial.

\begin{proof}
By \cite[Corollary 12.16(ii), Theorem 12.18]{Le-Ro11a}, every finitely generated right $R$-module decomposes uniquely as a direct sum of a torsion module and a torsion-free module. The first has finite length and hence has a unique decomposition into indecomposables, while the second is projective.
\end{proof}

\smallskip
A detailed description of $\cV(\cC_{\tor})$ is given in \cite[\S41]{Le-Ro11a}.
\end{remenumerate}
\end{remarks}

\bigskip

\providecommand{\bysame}{\leavevmode\hbox to3em{\hrulefill}\thinspace}
\providecommand{\MR}{\relax\ifhmode\unskip\space\fi MR }
\providecommand{\MRhref}[2]{%
  \href{http://www.ams.org/mathscinet-getitem?mr=#1}{#2}
}
\providecommand{\href}[2]{#2}

\end{document}